\apptocmd{\sloppy}{\hbadness 10000\relax}{}{}
\newtheorem{thm}{Theorem}[section]
\newtheorem{prop}[thm]{Proposition}
\newtheorem{lemma}[thm]{Lemma}
\newtheorem{corol}[thm]{Corollary}
\newtheorem{conj}[thm]{Conjecture}
\theoremstyle{remark} \newtheorem{rmk}[thm]{Remark}
\theoremstyle{definition} \newtheorem{defn}[thm]{Definition}
\theoremstyle{definition} 
\theoremstyle{definition} \newtheorem{ej}[thm]{Example}
\DeclareMathOperator{\Aut}{Aut}
\DeclareMathOperator{\Out}{Out}
\DeclareMathOperator{\Int}{Int}
\DeclareMathOperator{\Hom}{Hom}
\DeclareMathOperator{\End}{End}
\DeclareMathOperator{\Spec}{Spec}
\DeclareMathOperator{\GL}{GL}
\DeclareMathOperator{\SL}{SL}
\DeclareMathOperator{\id}{id}
\DeclareMathOperator{\an}{an}
\DeclareMathOperator{\Ad}{Ad}
\DeclareMathOperator{\cl}{cl}
\DeclareMathOperator{\Env}{Env}
\DeclareMathOperator{\inv}{inv}
\DeclareMathOperator{\Higgs}{Higgs}
\newcommand{\git}{\mathbin{/\mkern-6mu/}}
\newcommand{\XX}{\mathrm{X}}
\newcommand{\GG}{\mathbb{G}}
\newcommand{\Aa}{\mathbb{A}}
\newcommand{\aA}{\mathcal{A}}
\newcommand{\CC}{\mathbb{C}}
\newcommand{\RR}{\mathbb{R}}
\newcommand{\ZZ}{\mathbb{Z}}
\newcommand{\HH}{\mathbb{H}}
\newcommand{\g}{\mathfrak{g}}
\newcommand{\Tt}{\mathfrak{t}}
\newcommand{\OO}{\mathcal{O}}
\newcommand{\Oo}{\mathscr{O}}
\newcommand{\B}{\mathbb{B}}
\newcommand{\BB}{\mathcal{B}}
\newcommand{\MM}{\mathcal{M}}
\newcommand{\Mm}{\mathbf{M}}
\newcommand{\eps}{\varepsilon}
\newcommand{\Cc}{\mathbf{C}}
\newcommand{\ad}{\mathrm{ad}}
\newcommand{\EE}{\mathcal{E}}
\newcommand{\Rr}{\mathrm{R}}
\newcommand{\PP}{\mathrm{P}}
\newcommand{\pr}{\mathrm{pr}}
\newcommand{\Ab}{\mathbf{A}}
\author[G. Gallego]{Guillermo Gallego}
\address{Facultad de Ciencias Matemáticas, UCM, Plaza Ciencias 3, 28040 Madrid, SPAIN}
\email{guigalle@ucm.es}
\author[O. Garc\'ia-Prada]{Oscar García-Prada}
\address{Instituto de Ciencias Matemáticas, CSIC-UAM-UC3M-UCM, Nicolás Cabrera, 13--15, 28049 Madrid, SPAIN}
\email{oscar.garcia-prada@icmat.es}
\thanks{The first author's research is supported by the UCM and Banco Santander under the contract CT63/19-CT64/19. This work of the second author is partially supported by the Spanish Ministry of Science and Innovation, through the ``Severo Ochoa Programme for Centres of Excellence in R\&D (CEX2019-000904-S)" and grant No. PID2019-109339GB-C31.}
\subjclass[2020]{Primary 14D23; Secondary 14H60, 14M17}
\title{Multiplicative Higgs bundles and involutions}
\begin{document}
\begin{abstract}
	In this paper we generalize the theory of multiplicative $G$-Higgs bundles over a curve to pairs $(G,\theta)$, where $G$ is a reductive algebraic group and $\theta$ is an involution of $G$. This generalization involves the notion of a multiplicative Higgs bundle taking values in a symmetric variety associated to $\theta$, or in an equivariant embedding of it. We also study how these objects appear as fixed points of involutions of the moduli space of multiplicative $G$-Higgs bundles, induced by the involution $\theta$.
\end{abstract}
\maketitle

\section{Introduction}
\subsection{Multiplicative Higgs bundles} Let $k$ be an algebraically closed field of characteristic $0$, $G$ a reductive algebraic group over $k$ and $X$ a smooth algebraic curve over $k$. A \emph{multiplicative $G$-Higgs bundle} on $X$ is a pair $(E,\varphi)$ where $E$ is a principal $G$-bundle over $X$ and $\varphi$ is a section of the adjoint group bundle $E(G)$ over $X\setminus |D|$ the complement of a finite subset $|D|\subset X$. This is indeed a  ``multiplicative" version of an (ordinary, twisted)  \emph{$G$-Higgs bundle} on $X$, which is a pair $(E,\varphi)$ with $E$ a principal $G$-bundle over $X$ and $\varphi$ a section of the adjoint Lie algebra bundle $E(\g)$ twisted by some line bundle $L$ on $X$. As in the ordinary case, one wants to prescribe the singularities of $\varphi$. In the multiplicative case, instead of fixing the twisting line bundle $L$, one controls the singularity at each point $x\in |D|$ by fixing a dominant cocharacter $\lambda\in \XX_*(T)_+$, for a given choice $T\subset B\subset G$ of maximal torus $T$ and Borel subgroup $B$.

Just like for ordinary Higgs bundles \cite{hitchin}, one can define a Hitchin map in the multiplicative case, which is based on the ``multiplicative version"  of the Chevalley restriction map $G\rightarrow T/W$, for $W=N_G(T)/T$ the Weyl group of $T$. This leads to the \emph{multiplicative Hitchin fibration}. A general reference reviewing and explaining the several perspectives around multiplicative Higgs bundles and the multiplicative Hitchin fibration is presented by Elliot and Pestun \cite{elliot-pestun}.

Multiplicative Higgs bundles were originally introduced in algebraic geometry by Hurtubise and Markman \cite{hurtubise-markman}, although they had made previous appearances in the physics literature (see \cite{elliot-pestun} for references). Hurtubise and Markman considered the particular case where $X$ is an elliptic curve and, in this case, they constructed an algebraic symplectic form on the moduli space of \emph{simple} multiplicative Higgs bundles and proved that the multiplicative Hitchin fibration defines an algebraically completely integrable system. As explained in Elliot--Pestun \cite{elliot-pestun}, these results can be generalized to the case where $X$ is Calabi--Yau, that is, if $X$ is an elliptic curve, $\Aa^1$, or $\GG_m$.

The multiplicative Hitchin fibration was also studied independently by Frenkel and Ngô \cite{frenkel-ngo} in the context of geometrization of trace formulas. In particular, they suggest the use of Vinberg's enveloping monoid to describe the moduli stack of multiplicative Higgs bundles and construct the multiplicative Hitchin fibration. That paper was the starting point for a programme to study the multiplicative Hitchin fibration from the point of view of Vinberg's theory of reductive monoids \cite{vinberg}. This has been continued in several papers by Bouthier and J. Chi \cites{bouthier_Springer, bouthier_fibration, bouthier-chi,chi}, which study analogues of affine Springer fibres, and in the thesis and subsequent work of G. Wang \cite{wang}, dedicated to the Fundamental Lemma of Langlands--Shelstad for the spherical Hecke algebras, in the spirit of Ngô's work on the Fundamental Lemma for the Lie algebras \cite{ngo_lemme}.

When the base field is $k=\CC$ the field of complex numbers, multiplicative $G$-Higgs bundles on $X$ have also been related to singular $K$-monopoles on the $3$-manifold $X^{\an}\times S^1$, for $K\subset G$ a maximal compact subgroup, in the works of Charbonneau--Hurtubise \cite{charbonneau-hurtubise} and Smith \cite{smith}. The correspondence is given by constructing a multiplicative Higgs bundle as the \emph{scattering map} of a monopole. In the other direction, by ``gluing"  from a multiplicative Higgs bundle one constructs a holomorphic bundle on the $4$-manifold $X^{\an}\times S^1 \times S^1$, solves the Hermitian Yang--Mills equations on it and reduces to $X^{\an}\times S^1$ to obtain a monopole. These equations can be solved provided the stability condition on the holomorphic bundle on the $4$-manifold, which translates into an ad-hoc stability condition for the multiplicative Higgs bundle. The Charbonneau--Hurtubise--Smith correspondence is widely extended in Mochizuki's book \cite{mochizuki}. The moduli space of singular monopoles on $\CC\times S^1$ is known to be a hyperkähler manifold \cite{cherkis-kapustin}. Using Mochizuki's correspondence, Elliot and Pestun \cite{elliot-pestun} show that the holomorphic symplectic structure defined by that hyperkähler structure coincides with the Hurtubise--Markman algebraic symplectic structure on the moduli space of simple multiplicative Higgs bundles on $\Aa^1$. Furthermore, they study the twistor space for this hyperkähler manifold by introducing the notion of a \emph{$q$-difference connection}.

\subsection{Multiplicative Higgs bundles and involutions} The purpose of this paper is two-fold.

On the one hand, we introduce a generalization of multiplicative Higgs bundles from groups to pairs $(G,\theta)$ where $G$ is a reductive group and $\theta\in \Aut_2(G)$ is an involution of $G$. A \emph{multiplicative $(G,\theta)$-Higgs bundle} on $X$ is formed by a principal $G^\theta$-bundle $E\rightarrow X$ and a section $\varphi$ of the associated bundle $E(G/G^\theta)$ over the complement of a finite subset of $X$. Here, $G^\theta\subset G$ is the subgroup of fixed points of $\theta$ and $G/G^\theta$ is the corresponding symmetric variety. The group $G^\theta$ acts on $G/G^\theta$ by left multiplication and thus $E(G/G^\theta)\rightarrow X$ denotes the bundle of symmetric varieties associated to this action. 

Moreover, we generalize the description of the multiplicative Hitchin map in terms of the theory of reductive monoids, as developed in \cites{bouthier_Springer, bouthier_fibration, bouthier-chi,chi, frenkel-ngo,wang}, to this involutive situation. We do this by studying the theory of embeddings of symmetric varieties and using Guay's enveloping embedding \cite{guay}, and by extending the results of Richardson \cite{richardson}.

On the other hand, for any such pair $(G,\theta)$, we consider the involutions of the moduli space of (simple) multiplicative $G$-Higgs bundles given by
\begin{equation*}
\iota_{\pm}: (E,\varphi) \mapsto (\theta(E),\theta(\varphi)^{\pm 1})
\end{equation*} 
and study their fixed points. We show that the fixed points of $\iota_+$ define an algebraic symplectic submanifold of the moduli space of multiplicative Higgs bundles, while the fixed points of $\iota_-$ define a Lagrangian submanifold. We are specially interested in the involution $\iota_-$ and we show that multiplicative $(G,\theta)$-Higgs bundles appear as their fixed points, although also other components show up.

One of the reasons why multiplicative $(G,\theta)$-Higgs bundles are important is that when $k=\CC$ they are the multiplicative object analogous to Higgs bundles for \emph{real forms} of $G$. We recall that if $G_\RR$ is a real form of  $G$, a ($L$-twisted, for $L\rightarrow X$ a line bundle) $G_{\RR}$-Higgs bundle on $X$ is a pair $(E,\varphi)$, where $E$ is an $H$-bundle on  $X$ and $\varphi$ is a section of $E(\mathfrak{m})\otimes L$. Here, $H$ is the complexification of a maximal compact subgroup $H_\RR\subset G_\RR$ and  $\mathfrak{m}$ is the complexification of the space $\mathfrak{m}_\RR$ appearing in the Cartan decomposition $\g_\RR=\mathfrak{h}_\RR \oplus \mathfrak{m}_\RR$. 

There is a well known correspondence between real forms and involutions of $G$. If one regards a real form as the fixed points of an anti-holomorphic involution $\sigma:G\rightarrow G$, then one can find a commuting compact real form $\tau:G\rightarrow G$ and consider the involution $\theta=\sigma\circ \tau$. In this language, if $G_{\RR}=G^\sigma$, then $H=G^\theta$ and $\mathfrak{m}$ is the $(-1)$-eigenspace in the decomposition $\g=\mathfrak{h}\oplus \mathfrak{m}$. Note that $\mathfrak{m}$ is the tangent space of $G/G^\theta$ at $1\in G$, so a multiplicative $(G,\theta)$-Higgs bundle is indeed a multiplicative analogue of a $G_\RR$-Higgs bundle. In \cite{oscar-ramanan}, $G_{\RR}$-Higgs bundles receive the name of $(G^\theta,-)$-Higgs bundles and are shown to appear as fixed points of the involution $(E,\varphi)\mapsto (\theta(E),-\theta(\varphi))$. 

In any case, the definition of $G_{\RR}$-Higgs bundles originated from the nonabelian Hodge correspondence, through which they yield representations of the fundamental group of $X^{\an}$ in $G_\RR$. A similar correspondence is not known in the multiplicative situation. We expect to obtain some clarification of this by looking at multiplicative $(G,\theta)$-Higgs bundles or studying the fixed points of $\iota_-$ from the other side of the Charbonneau--Hurtubise--Smith correspondence. This is explored in the PhD Thesis of the first author \cite{tesis}, and will be the topic of a forthcoming paper.

Another reason to study the objects and involutions related to multiplicative Higgs bundles and pairs $(G,\theta)$ is to study \emph{branes}. In mirror symmetry, symplectic subvarieties of holomorphic symplectic manifolds are the support of \emph{$B$-branes}, while Lagrangian subvarieties are the support of \emph{$A$-branes}. In the language of hyperkähler geometry, holomorphic $B$-branes are $(B,B,B)$-branes, while holomorphic $A$-branes are $(B,A,A)$-branes.

For the classical Hitchin fibration, \emph{$S$-duality} is expected to give a correspondence between $B$-branes in the moduli space of $G$-Higgs bundles and $A$-branes in the moduli space of $\check{G}$-Higgs bundles, for $\check{G}$ the Langlands dual group of $G$. More precisely, since the Hitchin bases for $G$ and $\check{G}$ are canonically identified, the two moduli spaces fibre over the same space and Fourier--Mukai transforms on the fibres are expected to give the construction of $A$-branes on one of them from $B$-branes on the other. In particular, it is suggested that Higgs bundles for real forms $G_\RR$ of $G$, that give $A$-branes in the moduli space of $G$-Higgs bundles, should correspond to $\check{G}_{G_\RR}$-Higgs bundles, for $\check{G}_{G_\RR}\subset \check{G}$ the Nadler dual group of the real form $G_\RR$ \cite{nadler}. This conjectures arise from the work of Kapustin and Witten \cite{kapustin-witten} relating the geometric Langlands program with gauge theory, from the results on the duality of Hitchin systems given by Donagi and Pantev \cite{donagi-pantev}, and from a gauge-theoretical description of the construction of the Nadler group, given by Gaiotto and Witten \cite{gaiotto-witten}, and has been further studied in several works \cites{baraglia-schaposnik, oscar-biswas, oscar-biswas-hurtubise, branco, hitchin_hirzebruch}. The Nadler dual group coincides with $\check{G}_{G/G^\theta}$, the \emph{dual group} of the symmetric variety $G/G^\theta$, for $\theta$ the involution of $G$ corresponding to the real form $G_\RR$, introduced in the context of spherical varieties \cites{sake-venka, knop-schalke}. The dual group of a spherical variety plays an important role in the recent extension of the Langlands program (both in its classical and geometric flavours) to spherical varieties; this is the \emph{relative} Langlands program of Ben-Zvi, Sakellaridis and Venkatesh \cite{BZSV}. 
See Section \ref{section:dual-group} for more about dual groups. 

In \cite{elliot-pestun}*{Pseudo-Conjecture 3.9}, Elliot and Pestun suggest the existence of a similar correspondence between $A$-branes and $B$-branes in the moduli space of multiplicative $G$-Higgs bundles, when $X$ is Calabi--Yau and $G$ a Langlands self-dual group. If $G$ is a Langlands self-dual group, we can identify a maximal torus $T\subset G$ with its dual $\check{T}=\Spec k[e^{\XX_*(T)}]$, as well as the lattices of characters and cocharacters $\XX^*(T)\leftrightarrow \XX_*(T)$ and the roots and coroots $\Phi_T\leftrightarrow \Phi_T^\vee$. Moreover, in that case, the dual group $\check{G}_{G/G^\theta}$ of $G/G^\theta$ is naturally a subgroup of $G$. We thus suggest the following.

\begin{conj} \label{conjecture}
If $X$ is Calabi--Yau and $G$ is a Langlands self-dual group, for any involution $\theta\in \Aut_2(G)$, multiplicative $(G,\theta)$-Higgs bundles inside the moduli space of multiplicative $G$-Higgs bundles define the support of an $A$-brane dual to the $B$-brane given by multiplicative $\check{G}_{G/G^\theta}$-Higgs bundles.	
\end{conj}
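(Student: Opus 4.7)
The plan is to adapt to the multiplicative context the strategy pioneered for classical Higgs bundles by Kapustin--Witten, Hitchin, Baraglia--Schaposnik, and Branco, and to combine it with the dual-group machinery of the relative Langlands program.

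First I would confirm that the two loci really are branes of the advertised types. The $A$-brane side follows from the Lagrangian statement for $\iota_-$-fixed points recalled in the introduction, together with the realization of multiplicative $(G,\theta)$-Higgs bundles as (a union of components of) the $\iota_-$-fixed locus inside the moduli space of multiplicative $G$-Higgs bundles: given a $G^\theta$-bundle $E$ with a section $\varphi$ of $E(G/G^\theta)$, the associated $G$-bundle $E\times_{G^\theta} G$ carries a section of its adjoint group bundle satisfying $\varphi\,\theta(\varphi)=1$, which is precisely the fixed-point equation for $\iota_-$. The $B$-brane side is automatic once one uses Langlands self-duality to produce an embedding $\check{G}_{G/G^\theta}\hookrightarrow \check{G}\simeq G$: the induced map on moduli of multiplicative Higgs bundles exhibits multiplicative $\check{G}_{G/G^\theta}$-Higgs bundles as a complex symplectic subvariety, hence a $(B,B,B)$-brane.

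Next I would analyze the restriction of the multiplicative Hitchin fibration to the two loci. One expects the image of the $A$-brane in the multiplicative Hitchin base of $G$ to be cut out by invariants governed by the restricted root system (little Weyl group) of the symmetric pair $(G,\theta)$, and the image of the $B$-brane to be the multiplicative Hitchin base of $\check{G}_{G/G^\theta}$ under its inclusion into $G$. A crucial compatibility step is to show that these two subvarieties of the common multiplicative Hitchin base coincide; by the definition of the dual group in the theory of spherical varieties, this should reduce to matching the invariant theory of $G/G^\theta$ with that of $\check{G}_{G/G^\theta}$ under Langlands self-duality, a statement belonging to the same circle of ideas as the identification of Nadler's dual group.

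Finally, a fiberwise Fourier--Mukai argument on the multiplicative Hitchin fibration should exchange the two branes. Over a generic point of this common sub-base, the fibers of the multiplicative Hitchin map (when $X$ is Calabi--Yau) are commutative group schemes---abelian varieties for $X$ elliptic, semi-abelian schemes for $X=\GG_m$, and extensions by unipotent groups for $X=\Aa^1$---and the subschemes cut out by the $(G,\theta)$- and $\check{G}_{G/G^\theta}$-loci should be dual via an appropriate Fourier--Mukai kernel, in the spirit of Donagi--Pantev. The main obstacle will lie precisely here: for non-elliptic $X$ the fibers are not proper abelian varieties and a workable Fourier--Mukai formalism is much less standard; and even for $X$ elliptic one still needs an explicit description of the restricted fibrations in terms of Prym-type subschemes of a multiplicative cameral cover adapted to the symmetric pair. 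Developing this multiplicative cameral/Prym machinery, and identifying the corresponding dual abelian subschemes intrinsically from the dual-group data, is in my view the core technical content needed to complete the proof.
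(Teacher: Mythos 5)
The statement you address is a \emph{conjecture} in the paper (Conjecture~\ref{conjecture}); the paper does not prove it, so there is no ``paper's own proof'' to compare against. What the paper does prove (Theorem~\ref{symplectic}, built on Propositions~\ref{inclusion}, Theorem~\ref{fixedpoints} and Corollary~\ref{fixedpoints2}) is that the Hurtubise--Markman form satisfies $(\iota_a^{\eps})^*\Omega=\eps\Omega$, so the $\iota_-$-fixed locus --- which contains the image of multiplicative $(G,\theta)$-Higgs bundles as one of several components --- is Lagrangian, and the $\iota_+$-fixed locus is symplectic. This is explicitly presented as \emph{motivation} for the conjecture, not as a proof of it. Your first paragraph correctly recapitulates this half of the picture (and the fixed-point equation $\theta(\varphi)=\varphi^{-1}$ is stated correctly).

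There is, however, a genuine gap on the $B$-brane side of your sketch. You claim the embedding $\check{G}_{G/G^\theta}\hookrightarrow\check{G}\simeq G$ ``automatically'' exhibits multiplicative $\check{G}_{G/G^\theta}$-Higgs bundles as a complex symplectic subvariety, hence a $(B,B,B)$-brane. This is not automatic and is not proved in the paper. For a general reductive subgroup $H\subset G$ the induced map $\Mm(H)\to\Mm(G)$ need not have symplectic image (nondegeneracy can fail), and the $(B,B,B)$ property requires showing compatibility with the full hyperk\"ahler structure, not just a holomorphic embedding; in the classical Higgs case this is itself a nontrivial ingredient of the Kapustin--Witten/Baraglia--Schaposnik picture. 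Your remaining steps --- matching the invariant theory of $G/G^\theta$ over a common multiplicative Hitchin base with that of $\check{G}_{G/G^\theta}$, and a fiberwise Fourier--Mukai duality in the spirit of Donagi--Pantev --- are a plausible roadmap, and you correctly flag the absence of a multiplicative cameral/Prym framework and the failure of properness of fibers for $X=\Aa^1,\GG_m$ as central obstacles. But none of this is carried out in the paper; the paper's contribution is the Lagrangian/symplectic result for $\iota_\pm$, which is only the starting point.
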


\subsection{Outline of the paper} This paper consists of three different sections (apart from this introduction) and an appendix.

Section \ref{involutions} covers all the preliminary notions that we need about reductive groups with involutions, symmetric varieties and their embeddings, and their invariant theory. 
We begin by reviewing some general facts about reductive groups with involutions. We then introduce $\theta$-split tori and $\theta$-split parabolic subgroups, and recall the notion of a \emph{quasisplit} involution and the Iwasawa decomposition. We then recall the construction of the restricted root system and its associated root and weight lattices. We continue by describing the weight lattices and the weight semigroups of symmetric varieties and their embeddings, in the context of the theory of spherical varieties. We also recall the construction of the dual group of a symmetric variety, as introduced by Sakellaridis and Venkatesh \cite{sake-venka} and state its most relevant properties for us. We then review Guay's classification of a certain class of symmetric embeddings, which we call ``very flat" (in analogy with very flat monoids), and his construction of the ``enveloping" embedding \cite{guay}. Guay's enveloping embedding is an affine variety associated to a semisimple simply-connected group with involution that generalizes Vinberg's enveloping monoid. We also introduce the \emph{wonderful compactification} of a symmetric variety, and explain that the Guay embedding can be obtained as the spectrum of its Cox ring.  After this, we study the invariant theory of symmetric embeddings: first we recall Richardson's results on the invariant theory of a symmetric variety \cite{richardson}, and in Proposition \ref{invariant} we extend them to very flat symmetric embeddings. We finish the section by describing the loop parametrization of a symmetric variety, and extend it to the enveloping embedding in Proposition \ref{prop:extender_loops}, thus giving a generalization of \cite{chi}*{2.5}.

Section \ref{hitchin} introduces the main objects we are interested in, from several perspectives that we compare. 
In the first part of the section we emulate the construction of the multiplicative Hitchin fibration associated to a reductive monoid, as explained in \cite{wang}, in order to construct a multiplicative Hitchin map associated to a very flat symmetric embedding acted by a semisimple simply-connected group. In the second part of the section we introduce the notion of a multiplicative Higgs bundle associated to a reductive group with involution. We finish the section by relating the objects defined in the second part (with semisimple group) with the objects defined in the first part.

Finally, in Section \ref{invhiggs} we study a certain type of involutions in the moduli space of simple multiplicative Higgs bundles, how the objects introduced in Section \ref{hitchin} appear as their fixed points, and the relation between these involutions and the Hurtubise--Markman symplectic form.
More precisely, the section is dedicated to the study of the two involutions $\iota_{\pm}:(E,\varphi)\mapsto (\theta(E),\theta(\varphi)^{\pm 1})$, of the moduli space of simple multiplicative $G$-Higgs bundles, associated to an involution $\theta\in \Aut_2(G)$. We begin by recalling Hurtubise and Markman's \cite{hurtubise-markman} description of this moduli space of simple multiplicative $G$-Higgs bundles. We then show how multiplicative $(G,\theta)$-Higgs bundles are naturally mapped inside the fixed points of $\iota_-$ in Proposition \ref{inclusion} and give a detailed description of the fixed points in Theorem \ref{fixedpoints} and Corollary \ref{fixedpoints2}. We finish the section by reviewing the definition of the Hurtubise--Markman algebraic symplectic form in the moduli space of simple multiplicative $G$-Higgs bundles when the curve $X$ is assumed to be Calabi--Yau, and show in Theorem \ref{symplectic} how this symplectic form behaves under the above involutions. More precisely we show that the fixed points of $\iota_+$ define a symplectic submanifold, while the fixed points of $\iota_-$ give a Lagrangian submanifold. This is then a first result that motivates Conjecture \ref{conjecture} stated above.

At the end of the paper, we have included an Appendix (\ref{appendix}) to cover the basic facts that we use about root systems, and to establish our notation.

\subsection{Some notations} We establish now some notations regarding characters and cocharacters of algebraic tori. 
Given an algebraic torus $T$ we denote by $\XX^*(T)=\Hom(T,\GG_m)$ its lattice of characters and by $\XX_*(T)=\Hom(\GG_m,T)$ the lattice of cocharacters. Throughout the text we will use additive notation for characters and cocharacters. Thus, given an element $t\in T$ and a character $\chi\in \XX^*(T)$, we denote by $t^\chi\in \GG_m$ the image of the corresponding morphism $T\rightarrow \GG_m$. On the other hand, given an element $z\in \GG_m$ and a cocharacter $\lambda\in \XX_*(T)$, we denote by $z^\lambda \in T$ the image of the corresponding $1$-parameter subgroup $\GG_m\rightarrow T$. We consider the vector space $\EE_T=\XX^*(T)\otimes_{\ZZ}\RR$, endowed with the scalar product $(\cdot ,\cdot)$ induced by the choice of an invariant bilinear form on $T$ (which we also choose to be invariant under $\theta$ and under the Weyl group $W_T$ of $T$), and identify $\XX^*(T)$ and $\XX_*(T)$ as lattices inside $\EE_T$ and $\EE_T^*$, respectively. We can also consider the root system of $T$ inside $(\EE_T,(\cdot,\cdot))$; this is the set $\Phi_T\subset \XX^*(T)$ of the weights of the adjoint representation of $T$ in its Lie algebra $\mathfrak{t}$. See the Appendix \ref{appendix} for more notations regarding root systems.

\subsubsection*{Acknowledgements} We would like to thank Guillermo Barajas, Thomas Hameister, Jacques Hurtubise, Benedict Morrisey, Ngô Bao Châu and Griffin Wang for fruitful discussions, suggestions and references. We also thank the anonymous referee for their comments and corrections.

\section{Involutions, symmetric varieties and symmetric embeddings} \label{involutions}
\subsection{Some generalities on involutions} Let $G$ be a reductive algebraic group over $k$. By an \emph{involution} of $G$ we mean an order $2$ automorphism $\theta\in \Aut_2(G)$. To any such involution $\theta$ we can associate the subgroups
\begin{align*}
	G^\theta &= \left\{g\in G: \theta(g)=g\right\}	, \\
	G_\theta &= \left\{g\in G: g\theta(g)^{-1}\in Z_G\right\}	,
\end{align*}
where $Z_G\subset G$ is the centre of $G$.
The group $G^\theta$ is the group of fixed points of $\theta$ while $G_\theta$ can be shown to be its normalizer \cite{deconcini-procesi}*{1.7}. We can also consider $G_0^\theta=(G^\theta)^0$ the neutral connected component of $G^\theta$, and one can show that if $G$ is of simply-connected type then $G^\theta$ is connected, so $G_0^\theta=G^\theta$ \cite{steinberg_endomorphisms}*{Theorem 8.1}.

An involution $\theta$ of $G$ defines the \emph{$\theta$-twisted conjugation action} of $G$ on itself
\begin{align*}
G \times G & \longrightarrow G \\
(g,s) & \longmapsto g*_\theta s = g s \theta(g)^{-1}.
\end{align*} 
For each $s\in G$ we denote by $\tau^\theta_s:G\rightarrow G$ the map $\tau^\theta_s(g)=g*_\theta s$ sending $G$ to the $\theta$-twisted $G$-orbit $\tau^\theta_s(G)=G*_\theta s$. We also denote these $\theta$-twisted orbits by $M^\theta_s=\tau^\theta_s(G)$. In particular, we denote $\tau^\theta=\tau^\theta_1$ and $M^\theta=\tau^\theta(G)$. One can easily check that the isotropy subgroup of $s\in G$ is the fixed point subgroup $G^{\theta_s}$ of the automorphism
\begin{equation*}
\theta_s = \Int_s \circ \theta.
\end{equation*} 
Here, $\Int_s$ stands for the automorphism of $G$ given by conjugation by $s$; the elements of this form are the so-called \emph{inner automorphisms}, that form a subgroup $\Int(G)\subset\Aut(G)$. 

From the above, we conclude that the $\theta$-twisted orbits $M^\theta_s$ are homogeneous spaces of the form $G/G^{\theta_s}$, but not necessarily symmetric varieties. In particular, note that the orbit $M^\theta$ is naturally identified with the symmetric variety $G/G^\theta$. Moreover, this identification is a $G$-equivariant isomorphism, where $G$ acts on $M^\theta$ by $\theta$-twisted conjugation and on $G/G^\theta$ by left multiplication. We can ask ourselves for which $s\in G$ the automorphism $\theta_s$ is an involution. A simple computation shows that $\theta_s^2=\id_G$ if and only if $s\theta(s)$ is an element of the centre $Z_G$. The set of such $s$ is denoted by
\begin{equation*}
S_\theta = \left\{s\in G: s\theta(s)\in Z_G\right\}.
\end{equation*} 
For $s\in S_\theta$, the $\theta$-twisted orbit $M^\theta_s$ gets explicitly identified with $M^{\theta_s}$ as
\begin{equation*}
M^\theta_s=M^{\theta_s}s.
\end{equation*} 
In particular, note that if $s\in M^\theta$, then $M^{\theta_s}s=M^\theta_s=M^\theta$, so for $s\in M^\theta$ the symmetric varieties $G/G^\theta$ and $G/G^{\theta_s}$ can be identified. Moreover, if $s=g\theta(g)^{-1}$, we can identify $\theta_s=\Int_g \circ \theta \circ \Int_g^{-1}$ and $G^{\theta_s}=gG^\theta g^{-1}$.

It makes sense then to define an equivalence relation $\sim$ on $\Aut_2(G)$ :
\begin{equation*}
\text{$\theta\sim \theta'$ if and only if there exists $\alpha \in \Int(G)$ such that $\theta'=\alpha \circ \theta \circ \alpha^{-1}$,}
\end{equation*} 
and describe the quotient set $\Aut_2(G)/\sim$. We recall the description given in \cite{oscar-ramanan} in terms of what is called the ``clique map". 

The natural projection $\pi:\Aut_2(G)\rightarrow \Out_2(G):=\Aut_2(G)/\Int(G)$ factors through $\Aut_2(G)/\sim$. Indeed, this follows from the fact that
\begin{equation*}
\Int_g\circ \theta \circ \Int_g^{-1}=\Int_{g\theta(g)^{-1}}\circ \theta.
\end{equation*} 
We obtain a surjective map
\begin{align*}
\cl:\Aut_2(G)/\sim & \longrightarrow \Out_2(G), 
\end{align*} 
called the \emph{clique map}. The inverse image $\cl^{-1}(a)$ of a class $a\in \Out_2(G)$ is called the \emph{clique} of $a$. It is easy to check that for any $\theta\in \pi^{-1}(a)$ the clique $\cl^{-1}(a)$ is in bijection with the orbit set $S_\theta/(G\times Z_G)$, where $(G\times Z_G)$ acts on $S_\theta$ through the natural extension of the $\theta$-twisted conjugation action
\begin{align*}
	(G\times Z_G)\times G & \longrightarrow G \\
	((g,z),s) & \longmapsto zgs\theta(g)^{-1}.
\end{align*} 

The $\theta$-twisted conjugation action clearly preserves $S_\theta$ and also, for each $z\in Z_G$, the subset
\begin{equation*}
S^\theta_z=\left\{s\in G:s\theta(s)=z\right\}\subset S_\theta.
\end{equation*} 
In particular, it preserves the subset of ``anti-fixed"  points of $\theta$,
\begin{equation*}
S^\theta=S^\theta_{1}=\left\{s\in G: s=\theta(s)^{-1}\right\}.
\end{equation*} 
As explained in \cite{oscar-ramanan}, the quotient sets $S^\theta/G$ and $S_\theta/(G\times Z_G)$ admit an interpretation in terms of nonabelian group cohomology:
\begin{equation*}
S^\theta/G=H^1_\theta(\mathbb{Z}/2,G) \text{ and } S_\theta/(G\times Z_G)=H^1_\theta(\mathbb{Z}/2,G^\ad),
\end{equation*} 
where $G^\ad=G/Z_G$ and the subscript $\theta$ indicates that $\mathbb{Z}/2$ acts on $G$ and $G^\ad$ through $\theta$. Moreover, Richardson \cite{richardson2} shows that $H^1_\theta(\mathbb{Z}/2,G)$ (and thus $H^1_\theta(\ZZ/2,G)$) is finite.

\begin{rmk}
Consider the case where $k=\CC$. For any involution $\theta\in \Aut_2(G)$ we can find a maximal compact real form, defined by some conjugation $\sigma_c$ of $G$ commuting with $\theta$ and consider the composition $\sigma=\sigma_c\circ \theta$. This $\sigma$ defines a real form $G^\sigma$ of $G$. In \cite{adams-taibi}, Adams and Ta\"{\i}bi show that the nonabelian cohomology set $H^1_\theta(\ZZ/2,G)$ is isomorphic to the Galois cohomology $H^1_\sigma(\mathrm{Gal}(\CC/\RR),G)$. In particular, for $G^\ad$ one recovers Cartan's classification of real forms of $G$. Tables with $H^1_\theta(\ZZ/2,G)$ computed for semisimple simply-connected groups  can be found in \cite{adams-taibi}*{Section 10}. We also refer the reader to \cite{oscar-ramanan}*{Section 2.3} for more details about the correspondence between involutions and real forms.
\end{rmk}

\begin{ej}[The diagonal case] \label{ex:diagonal}
Groups with involution are in a certain sense a generalization of groups. Indeed, if $G$ is any linear algebraic group, we can consider the pair $(G\times G,\Theta)$, where $\Theta\in \Aut_2(G\times G)$ is the involution $\Theta(g_1,g_2)=(g_2,g_1)$. Let us denote by $\Delta:G\rightarrow G\times G$ the \emph{diagonal} map $\Delta(g)=(g,g)$ and by $\tilde{\Delta}$ the \emph{antidiagonal} $\tilde{\Delta}(g)=(g,g^{-1})$. The fixed point subgroup $(G\times G)^\Theta$ is the diagonal $\Delta(G)$, which can be naturally identified with $G$, whereas the orbit $M^{\Theta}$ is the antidiagonal $\tilde{\Delta}(G)$, which is again identified with $G$.	The set of anti-fixed points $S^\Theta$ is easily shown to be equal to  $M^\Theta$. We also have
\begin{align*}
	(G\times G)_\Theta  &= \left\{(zg,g):g\in G\right\}, \text{ and }
	S_\Theta = \left\{(zg,g^{-1}):g\in G\right\}.
\end{align*} 
Therefore, the orbit sets $S^\Theta/(G\times G)$ and $S_\Theta/(Z\times G\times G)$ are just singletons. The real form of $G\times G$ corresponding to  $\Theta$ is simply the group $G$ regarded as a real group.
\end{ej}

\begin{ej}[$G=\SL_n, n>2$] \label{ex:SLn}
In this case we have $\Out(G)\cong \ZZ/2$, so we distinguish two cases for $a\in \Out_2(G)$, $a=1$ and $a=-1$.	For $a=1$ we have 
\begin{equation*}
\cl^{-1}(1)=\left\{\theta_{p,q}: 0\leq p \leq q \leq n, p+q=n \right\},
\end{equation*} 
with $\theta_{p,q}(g)=I_{p,q}g I_{p,q}$, for
\begin{equation*}
I_{p,q} = 
\begin{pmatrix}
	I_p & 0 \\
	0 & -I_q
\end{pmatrix},
\end{equation*} 
where $I_p$ denotes the identity matrix of rank $p$. The corresponding groups of fixed points are
\begin{equation*}
G^{\theta_{p,q}}= \mathrm{S}(\GL_p\times \GL_q).
\end{equation*} 
Here, the $\mathrm{S}$ stands for taking the subset of matrices of determinant equal to $1$. Consider now the case $a=-1$. The clique  $\cl^{-1}(-1)$ consists of a single element if $n$ is odd and of two elements if $n=2m$ is even. In both cases we have the involution  $\theta_0(g)=(g^T)^{-1}$. 
The fixed points subgroup is 
\begin{equation*}
G^{\theta_0}= \mathrm{SO}_n.
\end{equation*} 
When $n=2m$ is even, we also have the involution $\theta_1(g)=J_m \theta_0(g)J_m^{-1}$, where $J_m$ is the symplectic matrix
\begin{equation*}
J_m =
\begin{pmatrix}
	0 & I_m \\
	-I_m & 0 
\end{pmatrix}.
\end{equation*} 
The fixed points subgroup is
\begin{equation*}
G^{\theta_1}= \mathrm{Sp}_{2m}.
\end{equation*} 

When $k=\CC$, we can also consider the real forms corresponding to these involutions. The compact real form of $\SL_n(\CC)$ is $\mathrm{SU}(n)$, which sits inside $\SL_n(\CC)$ as the fixed points of the conjugation $\sigma_K(g)=(g^\dagger)^{-1}$, where $^\dagger$ stands for taking transpose and complex-conjugation. Therefore, the real forms corresponding to the involution  $\theta_{p,q}$ are $\sigma_{p,q}(g)=I_{p,q}(g^\dagger)^{-1}I_{p,q}$, and $G^{\sigma_{p,q}}=\mathrm{SU}(p,q)$. Similarly, we have $\sigma_0(g)=\bar{g}$, so $G^{\sigma_0}=\SL_n(\RR)$, and, if $n=2m$, $\sigma_1(g)=J_m \bar{g} J_m^{-1}$, so $G^{\sigma_1}=\mathrm{SU}^*(2m)$.
\end{ej}

We refer to Table 26.3 in \cite{timashev} for a complete classification of the involutions of the simple groups.

\subsection{Split tori and split parabolics} \label{split}
Let $G$ be a reductive group over $k$ and $\theta\in \Aut_2(G)$ an involution. We say that a torus $A\subset G$ is \emph{$\theta$-split} if $\theta(a)=a^{-1}$ for every $a\in A$, and we say that it is \emph{maximal $\theta$-split} if it is maximal among $\theta$-split tori.

\begin{rmk}
The name ``$\theta$-split" comes from the correspondence with real groups when $k=\CC$. Indeed, a $\theta$-split torus is just the complexification of an $\RR$-split torus $A_{\RR}\subset G^\sigma$, for $\sigma$ the real form corresponding to $\theta$. 	
\end{rmk}

We now state some of the results of Vust \cite{vust_cones} on $\theta$-split tori.
\begin{prop}[Vust] \leavevmode
\begin{enumerate}
	\item Non-trivial $\theta$-split tori exist.	
	\item Any maximal torus $T$ containing a maximal $\theta$-split torus $A$ is \emph{$\theta$-stable}, meaning that $\theta(T)\subset T$.
	\item All maximal $\theta$-split tori are pairwise conjugated by elements of $G^\theta$.
	\item For any maximal $\theta$-split torus $A\subset G$, the group $G^\theta$ decomposes uniquely as $G^\theta=F^\theta G_0^\theta$, for
		\begin{equation*}
		F^\theta = A\cap G^\theta = \left\{a \in A: a^2=1\right\}.
		\end{equation*} 
\end{enumerate}	
\end{prop}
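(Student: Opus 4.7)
The plan is to handle the four statements in order, reducing each to a Lie-algebraic or standard reductive-group fact.

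For (1), I pass to the Cartan-type decomposition $\g=\g^\theta\oplus\m$ into $(\pm 1)$-eigenspaces of $d\theta$. Since $\theta\neq\id$ the space $\m$ is non-zero, and it must contain a non-zero semisimple element: if every $X\in\m$ were nilpotent then $\kappa(X,X)=0$ for all $X\in\m$, and $\theta$-invariance of the Killing form $\kappa$ together with polarisation would force $\kappa|_\m=0$, contradicting the non-degeneracy of $\kappa$ on $\m$. A Cartan subspace $\mathfrak{a}\subset\m$---a maximal commutative subspace of semisimple elements---is therefore non-trivial, and the connected algebraic subgroup $A\subset G$ with Lie algebra $\mathfrak{a}$ is a non-trivial torus; exponentiating the identity $d\theta=-\id$ on $\mathfrak{a}$ gives $\theta(a)=a^{-1}$ on $A$, so $A$ is $\theta$-split.

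For (2), let $T\supset A$ be a maximal torus. Then $T\subset M:=Z_G(A)$, and $M$ is $\theta$-stable because $\theta$ preserves $A$ set-theoretically. Both $T$ and $\theta(T)$ are maximal tori of the reductive group $M$. The first structural observation is that $A$ is already the maximal $\theta$-split torus of $M$: any $\theta$-split torus $A'\subset M$ centralises $A$, so $AA'$ is a $\theta$-split torus of $G$ containing $A$, which by maximality must equal $A$. It follows that $\theta|_{[M,M]}$ is \emph{anisotropic}, admitting only the trivial $\theta$-split torus. The plan is then to invoke the anisotropic lemma, stating that in this case every maximal torus of $[M,M]$ is $\theta$-stable; combined with the $\theta$-stability of $Z(M)^\circ$ and the decomposition $T=Z(M)^\circ\cdot (T\cap[M,M])$, this yields $\theta(T)=T$. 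I expect this anisotropic lemma to be the main technical obstacle: one must show that $[M,M]^\theta$ contains a maximal torus of $[M,M]$, which follows, as in Vust's original argument, from a dimension count using that no Cartan subspace of $\m\cap[\m,\m]$ can be non-trivial.

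For (3), I pass again to Lie algebras: maximal $\theta$-split tori correspond bijectively to Cartan subspaces of $\m$, and by the classical theorem of Kostant--Rallis any two such Cartan subspaces are conjugate under the adjoint action of $G_0^\theta$. Exponentiating delivers the required $G^\theta$-conjugacy of the tori. For (4), the inclusion $F^\theta G_0^\theta\subset G^\theta$ is immediate, and the identification $F^\theta=\{a\in A:a^2=1\}$ follows from $\theta(a)=a^{-1}$ on $A$. For the reverse inclusion, my plan is to analyse the component group via the little Weyl group $W_\theta:=N_{G^\theta}(A)/Z_{G^\theta}(A)$ of the symmetric pair: given $g\in G^\theta$, one uses (3) to conjugate $g$ into the normaliser $N_{G^\theta}(A)$, and a result of Richardson shows that $W_\theta$ is generated by reflections that can be represented by elements of $F^\theta$. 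The surjectivity of $F^\theta\to G^\theta/G_0^\theta$ then completes the decomposition.
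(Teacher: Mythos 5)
The paper does not prove this proposition; it simply cites Vust \cite{vust_cones}, so there is no argument in the text to compare yours against. On parts (1)--(3) your plan is essentially sound, with two caveats. In (1), the step from ``a Cartan subspace $\mathfrak{a}\subset\m$ exists'' to ``the subgroup with Lie algebra $\mathfrak{a}$ is a torus'' deserves a word: a commutative subspace of semisimple elements is not a priori the Lie algebra of a subtorus, and the cleanest route is to take a single non-zero semisimple $X\in\m$, note that $Z_G(X)$ is a $\theta$-stable reductive subgroup, and observe that the $\theta$-split part of its connected centre is a non-trivial $\theta$-split torus. In (2), the ``anisotropic lemma'' you flag as the main technical obstacle is actually an immediate corollary of (1): since $A$ is central in $M=Z_G(A)$ and is a maximal $\theta$-split torus, $[M,M]$ has no non-trivial $\theta$-split torus, so (1) applied to $[M,M]$ forces $\theta|_{[M,M]}=\mathrm{id}$; every maximal torus of $[M,M]$ is then $\theta$-fixed pointwise. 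Note also that the weaker statement you propose to establish (that $[M,M]^\theta$ contains \emph{some} maximal torus) would not by itself imply that \emph{every} maximal torus of $[M,M]$ is $\theta$-stable.

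The genuine error is in (4). You assert that $W_\theta$ is generated by reflections representable by elements of $F^\theta$. But $F^\theta=A\cap G^\theta\subset Z_{G^\theta}(A)$, so $F^\theta$ maps trivially into $W_\theta=N_{G^\theta}(A)/Z_{G^\theta}(A)$; your claim would force $W_\theta=1$, which fails already for $\mathrm{SL}_2$ with $\theta(g)=(g^T)^{-1}$. The fact you actually need runs in the opposite direction: since $W_\theta=N_{G_0^\theta}(A)/Z_{G_0^\theta}(A)$ (as recorded in the paper), every class in $W_\theta$ has a representative in $G_0^\theta$, hence $N_{G^\theta}(A)=N_{G_0^\theta}(A)\cdot Z_{G^\theta}(A)$, and after conjugating $g$ into $N_{G^\theta}(A)$ via (3) the problem reduces to showing $Z_{G^\theta}(A)\subset F^\theta G_0^\theta$. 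This last inclusion is the real content and follows from the structure of $L:=Z_G(A)$ already exploited in (2): $\theta$ is trivial on $[L,L]$, the connected centre $Z(L)^\circ$ is a $\theta$-stable torus whose $\theta$-split part equals $A$, one has $L^\theta=[L,L]\cdot\bigl(Z(L)^\circ\bigr)^\theta$, and $\bigl(Z(L)^\circ\bigr)^\theta\subset \bigl(Z(L)^\circ\bigr)^{\theta,\circ}\cdot F^\theta$; since $[L,L]$ and $\bigl(Z(L)^\circ\bigr)^{\theta,\circ}$ are connected subgroups of $G^\theta$, they lie in $G_0^\theta$, giving $Z_{G^\theta}(A)=L^\theta\subset F^\theta G_0^\theta$. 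The appeal to Richardson should be to this kind of centraliser analysis (compare his Lemma 8.1, which the paper cites for the analogous statement about $G_\theta$), not to reflection representatives.
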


Related to point $(4)$ in the proposition above, we also have the following result of Richardson.

\begin{lemma}[\cite{richardson}*{Lemma 8.1.(a)}]
For any maximal $\theta$-split torus $A\subset G$, the group $G_\theta$ decomposes uniquely as $G_\theta=F_\theta G_0^\theta$, for
\begin{equation*}
F_\theta = A\cap G_\theta = \left\{a \in A: a^2 \in Z_G\right\}.
\end{equation*} 
\end{lemma}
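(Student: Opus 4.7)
The strategy is to reduce the statement to Vust's analogous decomposition of $G^\theta$ by analyzing the homomorphism
\[
\phi : G_\theta \longrightarrow Z_G, \qquad g \longmapsto g\theta(g)^{-1}.
\]
First, since $F^\theta = \{a \in A : a^2 = 1\}$ is contained in $F_\theta = \{a \in A : a^2 \in Z_G\}$, Vust's decomposition $G^\theta = F^\theta G_0^\theta$ already yields $G^\theta \subseteq F_\theta G_0^\theta$, so it suffices to show that every $g \in G_\theta$ can be written as $g = a h$ with $a \in F_\theta$ and $h \in G^\theta$. A direct calculation using that the image of $\phi$ lies in the (abelian) center makes $\phi$ a group homomorphism with kernel $G^\theta$. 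For $a \in F_\theta \subseteq A$ the relation $\theta(a) = a^{-1}$ gives $\phi(a) = a^2$, and since $A$ is a torus over an algebraically closed field of characteristic zero (hence $2$-divisible), one gets $\phi(F_\theta) = \{a^2 : a \in A,\ a^2 \in Z_G\} = A \cap Z_G$.

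The argument thus reduces to the inclusion $\phi(G_\theta) \subseteq A \cap Z_G$: given this, for any $g \in G_\theta$ one picks $a \in A$ with $a^2 = \phi(g)$, which automatically lies in $F_\theta$, and then $a^{-1}g \in \ker\phi = G^\theta$; applying Vust to $a^{-1}g = a'h$ with $a' \in F^\theta \subseteq F_\theta$ and $h \in G_0^\theta$ produces $g = (aa')h \in F_\theta G_0^\theta$.

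To establish $\phi(G_\theta) \subseteq A$, I would handle the identity component first by a Lie-algebra argument: differentiating $\phi$ yields $X \mapsto X - d\theta(X)$, and its restriction to $\mathfrak{g}_\theta = \{X \in \mathfrak{g} : X - d\theta(X) \in \mathfrak{z}_G\}$ has image the $(-1)$-eigenspace $\mathfrak{z}_G^{-\theta}$ of $d\theta$ on the center. Exponentiating, $\phi(G_\theta)^0 = Z_G^{-\theta,0}$, which is a connected $\theta$-split subtorus and hence contained in $A$ by maximality. For the remaining finite component group I use that for any $g \in G_\theta$ the element $g^{-1}\theta(g) = \phi(g)^{-1}$ is central, so that for $a \in A$
\[
\theta(g a g^{-1}) = \theta(g)\,a^{-1}\,\theta(g)^{-1} = \phi(g)^{-1} g a^{-1} g^{-1} \phi(g) = (g a g^{-1})^{-1},
\]
showing $g A g^{-1}$ is again a maximal $\theta$-split torus. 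By the conjugacy of maximal $\theta$-split tori under $G_0^\theta$ (the proposition of Vust stated earlier), one can modify $g$ on the left by an element of $G_0^\theta$ to assume $g \in N_G(A)$ without changing $\phi(g)$. Then both $g$ and $\theta(g) = \phi(g)^{-1}g$ normalize $A$, inducing the same automorphism of $A$, and a centrality-plus-normalizer argument forces $\phi(g) \in A$.

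\textbf{Main obstacle.} The delicate step is the last one: although $A$ is maximal among connected $\theta$-split subtori, the full $\theta$-anti-fixed locus $\{t \in T : \theta(t) = t^{-1}\}$ inside a maximal torus $T \supseteq A$ is generally strictly larger than $A$, so $\phi(g)$ a priori lands only in this larger locus. Pinning it down to $A$ requires genuinely combining the centrality of $\phi(g)$ with the $G_0^\theta$-conjugacy of maximal $\theta$-split tori, in the spirit of Vust's own proof, rather than relying solely on Lie-algebra considerations. Uniqueness of the decomposition $G_\theta = F_\theta G_0^\theta$ is then inherited from Vust's uniqueness for $G^\theta$, since the ambiguity in choosing a square root of $\phi(g)$ lies in $F^\theta \subseteq F_\theta$ and can be absorbed into the $G_0^\theta$-factor.
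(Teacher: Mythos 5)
The paper itself offers no proof of this lemma; it is simply cited from Richardson \cite{richardson}*{Lemma 8.1.(a)}, so there is no in-paper argument to compare against. Evaluating your proposal on its own terms: the overall reduction is sound. You correctly observe that $\phi(g)=g\theta(g)^{-1}$ is a homomorphism $G_\theta\to Z_G$ with kernel $G^\theta$, that $\phi(F_\theta)=A\cap Z_G$ because $A$ is $2$-divisible, and that the whole statement would follow from $\phi(G_\theta)\subseteq A$ together with Vust's decomposition of $G^\theta$. The identity-component computation and the reduction (after multiplying by an element of $G_0^\theta$) to $g\in N_G(A)$ are also correct.

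However, the last step is a genuine gap, and you say so yourself. Knowing that $\phi(g)$ is a central element with $\theta(\phi(g))=\phi(g)^{-1}$ only places it in $T^{-\theta}\cap Z_G$, which is generally larger than $A$; and knowing that $g$ and $\theta(g)=\phi(g)^{-1}g$ induce the same automorphism of $A$ gives no information, since they differ by a central element and that is automatic. Nothing in the proposal actually pins $\phi(g)$ down to the identity component $A$ of $T^{-\theta}$. As written, the proof does not close.

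The gap has a clean fix that sidesteps both the Lie-algebra computation and the $N_G(A)$ reduction: note that $\phi(g)=\tau^\theta(g)$ always lies in $M^\theta=\tau^\theta(G)$, and for $g\in G_\theta$ it also lies in $Z_G$, hence is semisimple and its $G^\theta$-conjugation orbit is the single point $\{\phi(g)\}$, which is closed. By Richardson's Chevalley restriction theorem for symmetric varieties (quoted later in the paper as \cite{richardson}*{Corollary 11.5}, and underlying \cite{richardson}*{Lemmas 6.1--6.3} which the paper also invokes), every closed $G^\theta$-orbit in $M^\theta\cong G/G^\theta$ meets $\tau^\theta(A)=A$; since the orbit of $\phi(g)$ is a point, this forces $\phi(g)\in A$. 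This yields $\phi(G_\theta)=A\cap Z_G$ directly, after which your factorization argument and the uniqueness (from $F_\theta\cap G_0^\theta=F^\theta\cap G_0^\theta=\{1\}$) go through as you indicate.
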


A parabolic subgroup $P\subset G$ is \emph{$\theta$-split} if $P$ and $\theta(P)$ are opposite, that is, if $P\cap \theta(P)$ is a Levi subgroup of both $P$ and $\theta(P)$. If $P\subset G$ is a minimal $\theta$-split parabolic subgroup one can show \cite{vust_cones} that there exists a maximal $\theta$-split torus $A$ and a dominant cocharacter $\lambda \in \XX_*(A)_+$ such that $P$ is of the form
\begin{equation*}
P=P(\lambda) = \left\{p\in G: \lim_{t\rightarrow 0} t^\lambda p t^{-\lambda} \in G\right\}.
\end{equation*} 
Equivalently, if one takes $T\supset A$ a maximal torus, and considers $\Phi_T$ the associated root system and $\g = \Tt \oplus \bigoplus_{\alpha \in \Phi_\g} \g^\alpha$ the corresponding root space decomposition, then $P(\lambda)$ is the parabolic subgroup with Lie algebra
\begin{equation*}
\mathfrak{p}(\lambda) = \Tt \oplus \bigoplus_{\langle \alpha, \lambda \rangle \geq 0} \g^\alpha.
\end{equation*} 
This $P(\lambda)$ admits the Levi decomposition
\begin{equation*}
\mathfrak{l}=\Tt \oplus \bigoplus_{\langle \alpha, \lambda \rangle = 0} \g^\alpha=\mathrm{Lie}(Z_G(\lambda)) \ \ \text{ and } \ \ \mathfrak{p}(\lambda)^u = \bigoplus_{\langle \alpha, \lambda \rangle > 0} \g^\alpha,
\end{equation*} 
for $Z_G(\lambda)= Z_G(\left\{z^\lambda: z\in \CC^*\right\})$ the centralizer of the uniparametric subgroup induced by $\lambda$.
As a consequence, the Levi subgroup $P\cap \theta(P)$ is equal to
\begin{equation*}
P\cap \theta(P)= Z_G(\lambda) = Z_G(A).
\end{equation*} 

\begin{rmk}
	Again, the name ``$\theta$-split" comes from the correspondence with real groups when $k=\CC$. Indeed, a parabolic subgroup is $\theta$-split if and only if its corresponding real form is split. We can also consider parabolic subgroups such that their corresponding real form is quasisplit, which motivates the following.
\end{rmk}

\begin{defn}
An involution $\theta \in \Aut_2(G)$ is \emph{quasisplit} if there exists a $\theta$-split Borel subgroup $B\subset G$.	Equivalently, $\theta$ is quasisplit if there exists a maximal $\theta$-split torus $A$ such that $T=Z_G(A)$ is a maximal torus of $G$.
\end{defn}

We refer the reader to \cite{ABV} for the proof of the following fact.
\begin{lemma}\label{prop:quasisplit_existence}
Any class $a\in \Out_2(G)$ can be represented by a quasisplit involution.
\end{lemma}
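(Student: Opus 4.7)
The plan is to construct a quasisplit representative of $a$ as an inner twist of the canonical pinned lift. Fix a pinning $(T,B,\{X_\alpha\}_{\alpha\in\Delta})$ of $G$; the standard splitting of
\begin{equation*}
1\longrightarrow \Int(G)\longrightarrow \Aut(G)\longrightarrow \Out(G)\longrightarrow 1
\end{equation*}
by pinning-preserving automorphisms supplies a unique lift $\theta_0\in \Aut(G)$ of $a$, and $\theta_0$ is automatically an involution since $a$ is. Moreover $\theta_0$ preserves $T$ and $B$ and induces an involution $\sigma$ of the set $\Delta$ of simple roots.

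Next, I would look for a Weyl group element $w\in W_T$ with a lift $\dot w\in N_G(T)$ such that $\theta:=\Int_{\dot w}\circ \theta_0$ is both an involution and admits $B$ as a $\theta$-split Borel. The involution condition $\theta^2=\mathrm{id}_G$ translates to $\dot w\,\theta_0(\dot w)\in Z_G$. Since $\theta$ acts on $T$ via the composition $w\cdot\sigma$, the identity component $A$ of $\{t\in T:\theta(t)=t^{-1}\}$ corresponds to the $(-1)$-eigenspace of $w\sigma$ on $\XX_*(T)\otimes_\ZZ\RR$, and $Z_G(A)$ is the Levi subgroup whose root system consists of those $\alpha\in \Phi_T$ fixed by $w\sigma$. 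Therefore $B$ is $\theta$-split (with $B\cap\theta(B)=T$) precisely when $w\sigma$ fixes no root.

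The natural choice meeting both conditions is $w=w_0$, the longest element of $W_T$. It is an involution and is $\sigma$-invariant by the uniqueness characterisation of $w_0$ as the element sending $\Phi_T^+$ to $\Phi_T^-$, so $w_0\,\sigma(w_0)=w_0^2=1$ in $W_T$; and for any $\alpha\in\Phi_T^+$ one has $\sigma(\alpha)\in\Phi_T^+$ while $w_0(\alpha)\in\Phi_T^-$, so $w_0\sigma(\alpha)=\alpha$ would force a positive root to be negative, which is impossible. Taking the Tits lift $\dot w_0\in N_G(T)$ determined by the pinning, one has $\theta_0(\dot w_0)=\dot w_0$ (since $\theta_0$ preserves the cross-section and $\sigma(w_0)=w_0$), hence $\dot w_0\,\theta_0(\dot w_0)=\dot w_0^2$, and this lies in $Z_G$ by a standard identity in Tits groups. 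Setting $\theta:=\Int_{\dot w_0}\circ\theta_0$ then yields the required quasisplit involution in the class $a$.

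The main obstacle is the verification that the Tits lift $\dot w_0$ actually satisfies $\dot w_0^2\in Z_G$ and not merely $\dot w_0^2\in T$. This is a classical fact traceable to the evenness of $\langle\alpha,2\rho^\vee\rangle$ for every root $\alpha\in\Phi_T$, which ensures the element $(-1)^{2\rho^\vee}\in T$ (to which $\dot w_0^2$ is equal, up to conventions) kills every root and is therefore central. Once this identity is granted, the preceding paragraph immediately gives that $\theta=\Int_{\dot w_0}\circ\theta_0$ is a quasisplit involution with outer class $a$, completing the proof.
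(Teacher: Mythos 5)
The paper does not supply its own argument for this lemma, referring the reader instead to Adams--Barbasch--Vogan \cite{ABV}. Your constructive proof---take the pinned lift $\theta_0$ of the class $a$ and twist by $\Int_{\dot w_0}$ for the Tits lift of the longest Weyl element---is correct and is precisely the classical construction of a quasisplit representative in a given outer class. The facts you invoke are standard and used correctly: the Tits section is $\theta_0$-equivariant since $\theta_0$ preserves the pinning; $\sigma(w_0)=w_0$ by the characterisation of $w_0$ as the unique element of $W_T$ sending $\Phi_T^+$ to $\Phi_T^-$; and $\dot w_0^2=(2\rho^\vee)(-1)$ is central because $\langle\alpha,2\rho^\vee\rangle$ is even for every $\alpha\in\Phi_T$, so that $\theta=\Int_{\dot w_0}\circ\theta_0$ is indeed an involution of outer class $a$. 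One minor wording imprecision: you assert that $B$ is $\theta$-split \emph{precisely} when $w\sigma$ fixes no root, but you only establish the implication you actually need (no fixed root forces $Z_G(A)=T$, hence quasisplitness); the converse, and the identification of quasisplitness with $\theta$-splitness of the specific chosen $B$, are not argued. This does not affect the proof, since for $w=w_0$ one has $\theta(B)=\dot w_0\,\theta_0(B)\,\dot w_0^{-1}=\dot w_0 B\dot w_0^{-1}=B^-$ directly, so $B\cap\theta(B)=T$ and $B$ is $\theta$-split.
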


A consequence of the existence of minimal $\theta$-split parabolics is the \emph{Iwasawa decomposition} (see \cite{timashev} for a proof).

\begin{lemma}[Iwasawa decomposition]
Let $P\subset G$ be a minimal $\theta$-split parabolic subgroup and $P^u$ its unipotent radical. Then the product $G^\theta_0 A P^u$ is an open subset of $G$.
\end{lemma}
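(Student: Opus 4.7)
The plan is to prove the statement by showing that the multiplication morphism
\begin{equation*}
\mu: G^\theta_0 \times A \times P^u \longrightarrow G, \qquad (h,a,u) \longmapsto hau,
\end{equation*}
is smooth at the identity $(1,1,1)$. Smoothness there forces a Zariski-open neighbourhood of $1\in G$ to lie in $G^\theta_0 A P^u$, and invariance of this set under left-translation by $G^\theta_0$ and right-translation by $P^u$ then forces it to be open. Thus the proof reduces to the Lie-algebra equality
\begin{equation*}
\mathfrak{g} = \mathfrak{g}^\theta + \mathfrak{a} + \mathfrak{p}^u.
\end{equation*}

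The first key observation is that, since $P$ is $\theta$-split, $\theta(P)$ is the opposite parabolic of $P$ with common Levi $L = P \cap \theta(P) = Z_G(A)$, giving the decomposition
\begin{equation*}
\mathfrak{g} = \mathfrak{l} \oplus \mathfrak{p}^u \oplus \theta(\mathfrak{p}^u).
\end{equation*}
For any $Y \in \theta(\mathfrak{p}^u)$ one has $\theta(Y) \in \mathfrak{p}^u$ and $Y + \theta(Y) \in \mathfrak{g}^\theta$, so $Y = (Y + \theta(Y)) - \theta(Y)$ exhibits $Y$ as an element of $\mathfrak{g}^\theta + \mathfrak{p}^u$. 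Hence $\theta(\mathfrak{p}^u) \subset \mathfrak{g}^\theta + \mathfrak{p}^u$, and it remains to establish $\mathfrak{l} \subset \mathfrak{g}^\theta + \mathfrak{a}$. Since $L$ is $\theta$-stable, we split $\mathfrak{l} = \mathfrak{l}^\theta \oplus \mathfrak{l}^-$ into $\theta$-eigenspaces, with $\mathfrak{l}^\theta \subset \mathfrak{g}^\theta$, and the whole argument comes down to the key identity $\mathfrak{l}^- = \mathfrak{a}$.

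This last identity is the main obstacle, and the plan is to deduce it from the maximality of $A$ together with Vust's existence theorem. Decomposing the reductive Lie algebra $\mathfrak{l} = Z(\mathfrak{l}) \oplus [\mathfrak{l},\mathfrak{l}]$ into $\theta$-stable summands, it is enough to prove $Z(\mathfrak{l})^- = \mathfrak{a}$ and $[\mathfrak{l},\mathfrak{l}]^- = 0$. The first holds because $A$ sits inside the torus $Z(L)^0$ and is maximal $\theta$-split in $G$, so it coincides with the maximal $\theta$-split subtorus of $Z(L)^0$, whose Lie algebra is exactly $Z(\mathfrak{l})^-$. For the second, any non-trivial $\theta$-split torus $A' \subset [L,L]$ would commute with $A \subset Z(L)$ and the product $A \cdot A'$ would be a $\theta$-split torus in $G$ strictly larger than $A$, contradicting maximality; hence $[L,L]$ admits no non-trivial $\theta$-split torus, and Vust's result (item (1) of the preceding proposition) then forces $\theta|_{[L,L]} = \id$, giving $[\mathfrak{l},\mathfrak{l}]^- = 0$. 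Combining both yields $\mathfrak{l}^- = \mathfrak{a}$ and completes the proof.
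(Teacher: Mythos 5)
The paper does not prove this lemma---it simply cites Timashev's book---so there is no in-paper argument to compare against. Your strategy (reduce to the Lie-algebra identity $\g=\g^\theta+\mathfrak{a}+\mathfrak{p}^u$, then deduce openness from surjectivity of the differential of the multiplication map) is the standard route, and your Lie-algebra computation is correct: the triangular decomposition $\g=\mathfrak{l}\oplus\mathfrak{p}^u\oplus\theta(\mathfrak{p}^u)$, the symmetrization $Y=(Y+\theta(Y))-\theta(Y)$ for $Y\in\theta(\mathfrak{p}^u)$, the identification $Z(\mathfrak{l})^-=\mathfrak{a}$ via maximality of $A$ in the torus $Z(L)^0$, and $[\mathfrak{l},\mathfrak{l}]^-=0$ via the contrapositive of Vust's existence theorem are all sound.

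There is one loose end in the openness step. Invariance of $G^\theta_0AP^u$ under left $G^\theta_0$-translation and right $P^u$-translation alone does not suffice: for a point $hau$ with $a\ne 1$, the translate $hVu$ of an open neighbourhood $V\ni 1$ need not contain $hau$. You should additionally use that $A$ normalizes $P^u$, so $G^\theta_0 A P^u$ is also stable under right translation by $A$; then $hVau\subset G^\theta_0AP^u$ is an open neighbourhood of $hau$. Equivalently, one can verify surjectivity of $d\mu$ at every $(h,a,u)$ directly: identifying tangent spaces by left translation, the image of $d\mu_{(h,a,u)}$ is $\Ad_{(au)^{-1}}(\g^\theta)+\Ad_{u^{-1}}(\mathfrak{a})+\mathfrak{p}^u$, which equals $\Ad_{(au)^{-1}}\bigl(\g^\theta+\mathfrak{a}+\mathfrak{p}^u\bigr)=\g$ because $A$ is abelian and $au\in P$ normalizes $P^u$; hence $\mu$ is smooth, and therefore open, on all of $G^\theta_0\times A\times P^u$.
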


\subsection{The restricted root system} \label{roots}
Let $A\subset G$ be a maximal $\theta$-split torus and $T\subset G$  a maximal $\theta$-stable torus containing it. Let $r$ denote the rank of $T$ and $l$ the rank of $A$. The number $l$ is called the \emph{rank of the symmetric variety} $G/G^\theta$. Let us consider $\Phi_T$ the root system of $T$. 

The involution $\theta$ naturally induces an involution $\chi \mapsto \chi^\theta$ on the characters $\XX^*(T)$. For each $\chi \in \XX^*(T)$ we define $\bar{\chi}=(\chi-\chi^\theta)/2$, which is a well defined element of $\EE_T$. Now, the lattice formed by the elements $\bar{\chi}$ of this form is naturally identified with the group of characters $\XX^*(A)$.

It is easy to check that the involution on characters of $T$ sends roots to roots. The elements of the set $\Phi_T^\theta$ of roots fixed under this involution are called the \emph{imaginary roots}. A choice of positive roots $\Phi_T^+\subset \Phi_T$ can be made in such a way that if $\alpha \in \Phi_T^+$ is not an imaginary root, then $\alpha^\theta$ is a negative root. Making this choice amounts to choosing a Borel subgroup $B\subset G$. Indeed, this is the Borel subgroup $B$ with Lie algebra
\begin{equation*}
\mathfrak{b}=\mathfrak{t} \oplus \sum_{\alpha \in \Phi_T^+} \g_\alpha.
\end{equation*} 
The following follows easily.
\begin{lemma}
If $\theta$ is quasisplit, then $\Phi_T^\theta = \varnothing$.	
\end{lemma}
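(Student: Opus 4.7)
The plan is to derive a contradiction from the assumption that an imaginary root exists when $\theta$ is quasisplit. Concretely, assume there is a maximal $\theta$-split torus $A$ with $T=Z_G(A)$, and suppose for contradiction that some $\alpha\in\Phi_T$ satisfies $\alpha^\theta=\alpha$.

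The key computation is the behavior of $\alpha$ under restriction to $A$. Since $A$ is $\theta$-split, we have $\theta(a)=a^{-1}$ for every $a\in A$, so for any character $\chi\in\XX^*(T)$,
\begin{equation*}
\chi^\theta|_A = -\chi|_A
\end{equation*}
in additive notation. Applied to the imaginary root $\alpha=\alpha^\theta$, this gives $2\alpha|_A=0$, hence $\alpha|_A=0$ since $\XX^*(A)$ is torsion-free. Equivalently, $\bar{\alpha}=(\alpha-\alpha^\theta)/2=0$, and one recalls from the previous paragraph that the restriction $\chi|_A$ is precisely the element $\bar\chi\in \XX^*(A)$.

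Now I would exploit that $\alpha|_A=0$ means $A$ acts trivially on the root space $\g_\alpha$ via the adjoint representation. Therefore $\g_\alpha$ is contained in the centralizer $\mathfrak{z}_\g(A)=\mathrm{Lie}(Z_G(A))$. By the quasisplit hypothesis, $Z_G(A)=T$, so $\g_\alpha\subset\mathfrak{t}$. But from the root space decomposition $\g=\mathfrak{t}\oplus \bigoplus_{\beta\in\Phi_T}\g_\beta$ one has $\g_\alpha\cap\mathfrak{t}=0$, contradicting $\g_\alpha\neq 0$.

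This argument has no genuine obstacle: the only substantive input is the identity $\chi|_A=\bar\chi$ for characters of $T$ restricted to a $\theta$-split subtorus, which is immediate from the definitions, plus the defining property $T=Z_G(A)$ of the quasisplit case. I would therefore present the proof in a single short paragraph along the lines above.
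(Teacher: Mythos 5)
The paper gives no proof of this lemma, remarking only that it ``follows easily'' from the preceding discussion, so there is nothing to compare against. Your argument is correct and is the natural one: from $\alpha^\theta=\alpha$ and $\theta|_A=\mathrm{inv}$ you get $\alpha|_A=0$ (torsion-freeness of $\XX^*(A)$), hence $\g_\alpha\subset\mathfrak{z}_\g(A)=\mathrm{Lie}(Z_G(A))=\mathfrak{t}$ by the quasisplit hypothesis, which contradicts the root space decomposition. Every step is valid; in particular the identification $\bar\chi|_A=\chi|_A$ and the fact that $\mathrm{Lie}(Z_G(A))=\mathfrak{z}_\g(A)$ (characteristic $0$) are both sound. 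One minor remark: your argument actually yields the cleaner observation that $\Phi_T^\theta=\{\alpha\in\Phi_T:\alpha|_A=0\}=\Phi_{Z_G(A),T}$, so $\Phi_T^\theta=\varnothing$ is in fact \emph{equivalent} to $Z_G(A)=T$, i.e.\ to $\theta$ being quasisplit; stating it this way makes the lemma and its converse simultaneous.
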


We consider now the vector space $\EE_\theta=\XX^*(A)\otimes_\ZZ \RR$, which is naturally a subspace of $\EE_T$ and thus inherits the scalar product $(\cdot,\cdot)$.
\begin{defn}
The set of \emph{restricted roots} of $\theta$ is
\begin{equation*}
\Phi_\theta=\left\{\bar{\alpha}=\frac{\alpha- \alpha^\theta}{2} \in \EE_\theta : \alpha \in \Phi_T \setminus \Phi_T^\theta \right\}.
\end{equation*} 
\end{defn}
The following is well known (see, for example \cite{richardson}).

\begin{lemma}
The set $\Phi_\theta$ is a (possibly non-reduced) root system in $\EE_\theta$ with Weyl group the \emph{little Weyl group} $W_\theta=N_G(A)/Z_G(A)=N_{G_0^\theta}(A)/Z_{G_0^\theta}(A)$.
\end{lemma}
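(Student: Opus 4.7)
The plan is to reduce everything to the structure of the ambient root system $\Phi_T$ via the orthogonal decomposition $\EE_T=(\EE_T)^{\theta=+1}\oplus (\EE_T)^{\theta=-1}$, where the second summand is canonically $\EE_\theta$. Under this identification the assignment $\chi\mapsto \bar\chi=(\chi-\chi^\theta)/2$ is precisely the orthogonal projection onto $\EE_\theta$, so $\Phi_\theta$ is the image of $\Phi_T\setminus \Phi_T^\theta$. Finiteness is immediate, and the spanning property follows because $\Phi_T$ spans $\EE_T$ while the imaginary roots lie in the kernel of the projection.

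The key step will be to construct, for each $\alpha\in \Phi_T\setminus \Phi_T^\theta$, an element $w_\alpha\in W_T$ that commutes with $\theta$ on $\EE_T$ (hence preserves $\EE_\theta$) and restricts on $\EE_\theta$ to the reflection $s_{\bar\alpha}$. I would treat this case-by-case according to the $\theta$-stable rank-two sub-system of $\Phi_T$ generated by $\{\alpha,\alpha^\theta\}$: when $\alpha^\theta=-\alpha$, the reflection $s_\alpha$ itself commutes with $\theta$ and restricts to $s_{\bar\alpha}=s_\alpha$; when $\alpha$ and $\alpha^\theta$ are distinct and orthogonal, the commuting product $s_\alpha s_{\alpha^\theta}$ is $\theta$-invariant and restricts to $s_{\bar\alpha}$ (a short check uses $\bar\alpha^\vee=\alpha^\vee-(\alpha^\theta)^\vee$); and for the remaining non-orthogonal non-imaginary configurations (types $A_2$, $B_2$, $G_2$), I would take the longest element of the Weyl group of the sub-system, following the classical construction behind Satake diagrams. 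Granting the $w_\alpha$, the inclusion $s_{\bar\alpha}(\Phi_\theta)\subset \Phi_\theta$ follows because $w_\alpha$ permutes $\Phi_T\setminus \Phi_T^\theta$ and commutes with the projection; the integrality of $\langle \bar\beta,\bar\alpha^\vee\rangle$ is inherited from $\Phi_T$ via orthogonality of the projection. Two roots of different lengths with proportional projections can occur, accounting for the possible non-reducedness.

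For the Weyl group identification, the element $w_\alpha$ above preserves $\XX_*(A)\subset \XX_*(T)$ (being $\theta$-equivariant), so admits a lift $n_\alpha\in N_G(T)\cap N_G(A)$ producing an element of $W_\theta=N_G(A)/Z_G(A)$ whose action on $\EE_\theta$ is $s_{\bar\alpha}$; this embeds the abstract Weyl group of $\Phi_\theta$ into $W_\theta$. For the reverse inclusion, note that conjugation by $N_G(A)$ permutes the $A$-weight spaces of $\g$, so $W_\theta$ preserves $\Phi_\theta$; a standard Weyl chamber argument expresses every element as a product of reflections $s_{\bar\alpha}$, giving equality. The remaining identification $N_G(A)/Z_G(A)=N_{G_0^\theta}(A)/Z_{G_0^\theta}(A)$ I would derive from a lifting argument in the style of \cite{richardson}*{Lemma 8.1.(a)}: for $n\in N_G(A)$ the element $n^{-1}\theta(n)$ lies in $Z_G(A)$ (since $\theta|_A$ is inversion), and splitting the resulting $\ZZ/2$-cocycle inside the connected reductive group $Z_G(A)^0$ produces $z\in Z_G(A)$ with $nz\in G_0^\theta$. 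The main obstacle will be the case analysis producing the $w_\alpha$, which is classical but delicate.
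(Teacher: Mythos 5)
The paper gives no proof of this lemma: it simply remarks ``the following is well known'' and points to Richardson's article. So there is nothing in the paper to compare to line by line; what you have written is a genuine proof from scratch, and it follows the classical Araki--Satake route (project $\Phi_T$ onto the $(-1)$-eigenspace of $\theta$, realize each restricted reflection by a $\theta$-commuting Weyl element of the ambient group, then match the reflection group with $N_G(A)/Z_G(A)$). That is indeed the approach behind Richardson's treatment and is the right one.

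A few points deserve more care than your sketch gives them. First, in the non-orthogonal, non-imaginary case you need the longest element of the rank-two subsystem to be taken with respect to a $\theta$-adapted system of positive roots (one for which every positive non-imaginary root is sent by $\theta$ to a negative root); without that normalization the longest element of $\langle\alpha,\alpha^\theta\rangle$ need not negate $\bar\alpha$, and in fact for a generic choice of positive system it does not. Second, the integrality $\langle\bar\beta,\bar\alpha^\vee\rangle\in\ZZ$ is \emph{not} simply inherited ``by orthogonality of the projection'': the coroot $\bar\alpha^\vee$ is the orthogonal projection of $\alpha^\vee$ only in the case $\alpha^\theta=-\alpha$; in the orthogonal case it is twice that projection ($\bar\alpha^\vee=\alpha^\vee-(\alpha^\theta)^\vee$, as you note), and in the remaining case there is yet another factor, so each case requires its own short computation. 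Third, the Weyl-chamber argument showing that $W_\theta$ is generated by the reflections $s_{\bar\alpha}$, and the cocycle-splitting argument establishing $N_G(A)/Z_G(A)=N_{G_0^\theta}(A)/Z_{G_0^\theta}(A)$, are both stated too briefly to be complete: the chamber argument must rule out that $W_\theta$ contains a nontrivial chamber-stabilizer (i.e.\ a diagram automorphism not coming from a reflection), and the splitting requires knowing the relevant $1$-cocycle in $Z_G(A)$ is a coboundary, which is a statement about $H^1(\ZZ/2,Z_G(A))$ and not automatic. These are all true and all classical, but they are exactly the places where Richardson's paper does real work, so if this lemma were to be proved in full the details would need to be supplied.
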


Simple roots for $\Phi_\theta$ can be constructed from the simple roots $\Delta_T$ of $\Phi_T$. Indeed, these roots can be split into two groups
\begin{equation*}
\Delta_T \cap \Phi_T \setminus \Phi_T^\theta = \left\{\alpha_1,\dots,\alpha_m \right\}, \ \ \Delta_T\cap \Phi_T^\theta = \left\{\beta_1,\dots,\beta_{r-m}\right\}.
\end{equation*} 
Now, one can order (see \cite{deconcini-procesi} for more details) the simple roots $\left\{\alpha_1,\dots,\alpha_m\right\}$ in such a way that the $\bar{\alpha}_i$ are mutually distinct for all $i\leq l$ and for each $j>l$ there exists some $i\leq l$ with $\bar{\alpha}_j=\bar{\alpha}_i$. Thus, we obtain the set of \emph{restricted simple roots}
\begin{equation*}
\Delta_\theta = \left\{\bar{\alpha}_1,\dots,\bar{\alpha}_l\right\}.
\end{equation*} 

There is a natural inclusion of the root lattice $\Rr_\theta:=\Rr(\Phi_\theta)=\ZZ\langle \Phi_\theta \rangle$ in the group of characters $\XX^*(A)$, and the cokernel of this inclusion is the group of characters of the subgroup
\begin{equation*}
\left\{a\in A: \bar{\alpha}(a)=1 \text{ for all }\bar{\alpha}\in \Phi_\theta\right\}.
\end{equation*} 
This group can be easily shown to be equal to $F_\theta=A\cap G_\theta$, and thus, if we denote  $A_{G_\theta}=A/F_\theta$, we get
\begin{equation*}
\Rr_\theta = \XX^*(A_{G_\theta}).
\end{equation*} 
Dually, we get $\PP^\vee_\theta=\PP^\vee(\Phi_\theta)=\XX_*(A_{G_\theta})$.

We describe now the weight lattice $\PP_\theta=\PP(\Phi_\theta)$ of $\Phi_\theta$. Let us denote by $\varpi_1,\dots,\varpi_l$ the corresponding fundamental dominant weights (that is, we want these weights to satisfy $\langle \bar{\alpha}_i^\vee, \varpi_j\rangle = \delta_{ij}$ for $i=1,\dots,l$). When $G$ is semisimple simply-connected, these $\varpi_i$ can be determined in terms of the fundamental dominant weights of $\Phi_T$. We can partition these into two sets
\begin{equation*}
\left\{\omega_1,\dots,\omega_m\right\}, \ \ \left\{\zeta_1,\dots,\zeta_{r-m}\right\}
\end{equation*} 
with $\langle \alpha_i^\vee, \omega_j \rangle= \delta_{ij}$, $\langle \beta_i^\vee,\zeta_j \rangle=\delta_{ij}$ and all the $\langle \alpha_i^\vee, \zeta_j \rangle = \langle \beta_i^\vee, \omega_j \rangle  =0$. 

Since a root $\alpha$ and its image ${\alpha}^\theta$ must have the same length, we can have three possible cases: (1) $\alpha^\theta=-\alpha$,  (2) $\langle \alpha^\vee, \alpha^\theta \rangle = 0$, and (3) $\langle \alpha^\vee,\alpha^\theta \rangle = 1$.
Note that if there is a root $\alpha$ of type (3) then $s_{\alpha^\theta}(\alpha)=\alpha-\alpha^\theta$ must be a root in $\Phi_T$, and this root restricts to itself, which is equal to $2\bar{\alpha}$. This implies that $\Phi_\theta$ is nonreduced and that we can take the restricted simple roots $\Delta_\theta$ coming just from simple roots of types (1) and (2).

Now, for any $\chi \in \XX^*(T)$ we have
\begin{equation*}
	\langle \bar{\alpha}^\vee, \chi-\chi^\theta \rangle = 
\begin{cases}
\langle \alpha^\vee, \chi \rangle & \text{ in case (1)} \\
	\langle \alpha^\vee, \chi-\chi^\theta \rangle & \text{ in case (2)} \\
	2\langle \alpha^\vee, \chi-\chi^\theta \rangle & \text{ in case (3)}.
\end{cases}
\end{equation*} 
As in \cite{deconcini-procesi}, it is easy to see that there exists some permutation $\sigma \in \mathfrak{S}_m$ of order $2$ such that for each $i=1,\dots, l$, we have that $\alpha_i^\theta + \alpha_{\sigma(i)}=\alpha_{\sigma(i)}^\theta + \alpha_i$ is a positive imaginary root. Moreover, we have $\omega_i^\theta = - \omega_{\sigma(i)}$. Then it is straightforward from the above that we get
\begin{equation*}
\varpi_i = 
\begin{cases}
	2\omega_i & \text{ if $\alpha_i$ is of type (1)} \\
	\omega_i+\omega_{\sigma(i)} & \text{ if $\alpha_i$ is of type (2) and $i\neq \sigma(i)$} \\
	\omega_i & \text{ if $\alpha_i$ is of type (2) and $i= \sigma(i)$}. 
\end{cases}
\end{equation*} 
Note that if $\Phi_T^\theta=\varnothing$, then $i=\sigma(i)$ if and only if $\alpha_i^\theta=-\alpha_{i}$, and thus we do not have the third situation in the formula above.

\begin{lemma}
When $G$ is semisimple simply connected, the weight lattice $\PP_\theta=\ZZ\langle \varpi_1,\dots,\varpi_l \rangle$ is equal to the group of characters $\XX^*(A_{G^\theta})$ of the torus $A_{G^\theta}=A/F^\theta$. Dually, we get $\Rr^\vee_\theta=\Rr^\vee(\Phi_\theta)=\XX_*(A_{G^\theta})$.
\end{lemma}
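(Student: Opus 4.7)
The plan is to establish both inclusions $\PP_\theta \subseteq \XX^*(A_{G^\theta})$ and $\XX^*(A_{G^\theta}) \subseteq \PP_\theta$; the dual statement $\Rr^\vee_\theta = \XX_*(A_{G^\theta})$ then follows by dualizing in $\EE_\theta$.

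For the first inclusion, I would begin by describing $\XX^*(A_{G^\theta})$ explicitly. Since $F^\theta = \{a\in A: a^2 = 1\}$ is the $2$-torsion of the rank-$l$ torus $A$, a character $\mu \in \XX^*(A)$ vanishes on $F^\theta$ if and only if $\mu$ is divisible by $2$ in $\XX^*(A)$; combined with $\XX^*(A) = \{\bar\chi : \chi \in \XX^*(T)\}$ this rewrites as $\XX^*(A_{G^\theta}) = \{\chi - \chi^\theta : \chi \in \XX^*(T)\}$. It therefore suffices to write each $\varpi_i$ in the form $\chi - \chi^\theta$. Using $\omega_i^\theta = -\omega_{\sigma(i)}$, the first two cases of the formula give $\varpi_i = \omega_i - \omega_i^\theta$ directly (with $\chi = \omega_i$). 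The remaining case ($\varpi_i = \omega_i$, with $\alpha_i$ of type (2) and $\sigma(i) = i$) is more delicate: here $\omega_i^\theta = -\omega_i$, and one exploits the structure of $\Phi_T$ near $\alpha_i$---specifically, that $\alpha_i + \alpha_i^\theta$ is then a positive imaginary root---to produce a suitable $\chi \in \XX^*(T)$ (an adjustment of $\omega_i$ by imaginary fundamental weights $\zeta_k$) with $\chi - \chi^\theta = \omega_i$.

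For the reverse inclusion, the plan is to match indices. Both $\PP_\theta$ and $\XX^*(A_{G^\theta})$ contain $\Rr_\theta = \XX^*(A_{G_\theta})$, and the inclusion $F^\theta \subseteq F_\theta$ yields a short exact sequence $1 \to F_\theta/F^\theta \to A_{G^\theta} \to A_{G_\theta} \to 1$, so $[\XX^*(A_{G^\theta}) : \Rr_\theta] = |F_\theta/F^\theta|$, while $[\PP_\theta : \Rr_\theta]$ equals the order of the fundamental group of $\Phi_\theta$. Using the decomposition $T = T^\theta \cdot A$ (with $T^\theta \cap A = F^\theta$) and the simply-connected hypothesis (which ensures $\XX^*(T) = \PP(\Phi_T)$), one identifies $F_\theta/F^\theta$ as a specific subgroup of $Z_G$ whose order matches $|\PP_\theta/\Rr_\theta|$; combined with the first inclusion this forces $\PP_\theta = \XX^*(A_{G^\theta})$.

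The main obstacle is the third subcase of the first step, where $\varpi_i = \omega_i$ is not manifestly antisymmetric under $\theta$ and the required representative $\chi$ must be extracted from the imaginary root structure. The index matching in the second step is more bookkeeping in nature, but also depends crucially on the simply-connected hypothesis through the identification $\XX^*(T) = \PP(\Phi_T)$ and the resulting explicit description of the $\varpi_i$ in terms of the $\omega_i$.
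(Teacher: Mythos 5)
Your inclusion $\PP_\theta \subseteq \XX^*(A_{G^\theta})$ runs parallel to the paper's: both of you check that each $\varpi_i$ lies in $\{\chi-\chi^\theta : \chi\in\XX^*(T)\}$, using $\omega_i^\theta=-\omega_{\sigma(i)}$ for the first two cases and handling the third case ($\alpha_i$ of type (2) with $\sigma(i)=i$) more cautiously. The paper disposes of that case by asserting that $\omega_i$ restricts trivially to $F^\theta$ when $\omega_i^\theta=-\omega_i$; you instead propose to manufacture a $\chi$ with $\chi-\chi^\theta=\omega_i$ by perturbing by imaginary fundamental weights $\zeta_k$. Since $\XX^*(A_{G^\theta})=\{\chi-\chi^\theta\}$, these are logically equivalent framings, and both are left at the level of a sketch.

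Where you genuinely diverge is the reverse inclusion, and that is where the gap lies. The paper shows $\{\chi-\chi^\theta\}\subseteq\PP_\theta$ directly: by the three-case pairing formula displayed immediately before the lemma, $\langle\bar\alpha^\vee,\chi-\chi^\theta\rangle$ is an integer for every $\chi\in\XX^*(T)$ and every $\bar\alpha\in\Phi_\theta$, so $\chi-\chi^\theta$ is a weight of $\Phi_\theta$. This is essentially free given the preceding computation. You instead propose an index count: $[\XX^*(A_{G^\theta}):\Rr_\theta]=\lvert F_\theta/F^\theta\rvert$ and $[\PP_\theta:\Rr_\theta]=\lvert\PP_\theta/\Rr_\theta\rvert$, and you assert (via "one identifies $F_\theta/F^\theta$ as a specific subgroup of $Z_G$ whose order matches $\lvert\PP_\theta/\Rr_\theta\rvert$") that these coincide. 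But this equality of orders is not "bookkeeping" --- it is the lemma itself, repackaged. Pinning down the order of $F_\theta/F^\theta\cong Z_G\cap A$ and matching it to $\lvert\PP_\theta/\Rr_\theta\rvert$ requires precisely the same structural information about how the $\varpi_i$ sit inside $\XX^*(A)$ that the direct integrality argument supplies for free, so the counting route relocates the difficulty into an unproved claim rather than resolving it. You should instead simply invoke the displayed pairing formula, which makes this direction immediate.
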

\begin{proof}
The lattice $\XX^*(A_{G^\theta})$ can be identified with $2\XX^*(A)=\left\{\chi-\chi^\theta: \chi \in \XX^*(T)\right\}$. It is clear from the above that for every $\chi\in \XX^*(T)$ and  $\bar{\alpha}\in \Phi_\theta$, the number $\langle \bar{\alpha}^\vee, \chi-\chi^\theta \rangle$ is an integer. On the other hand, $2\omega_i$ and $\omega_i+\omega_{\sigma(i)}$ are clearly of the form $\chi-\chi^\theta$, while clearly $\omega_i$ restricts to $1$ on $F^\theta=A\cap G^\theta$ if $\omega_i^\theta=-\omega_i$.
\end{proof}

To finish the section, note that
\begin{equation*}
\PP_{\theta,+} := \PP_+(\Delta_\theta)=\PP_\theta \cap C_{\Delta_\theta}^+ =\PP_\theta \cap C_{\Delta}^+= \XX^*(A_{G^\theta})\cap \XX^*(T)_+ =:\XX^*(A_{G^\theta})_+.
\end{equation*} 
By an analogous argument, we get $\PP_{\theta,+}^\vee=\XX_*(A_{G_\theta})\cap \XX_*(T)_+=\XX_*(A_{G_\theta})_+$. 

\begin{ej}[The diagonal case]
In the diagonal case from Example \ref{ex:diagonal}, a maximal $\Theta$-split torus is given by $A=\left\{(t^{-1},t):t\in T\right\}\subset T\times T$, with character lattice $\XX^*(A)=\left\{(-\chi,\chi):\chi \in \XX^*(T)\right\}$. The restricted roots are then identified with $\Phi_T$. 
\end{ej}

\begin{ej}[$\SL_n, n>2$]
Recall the classification of the involutions of $\SL_n$ given in Example \ref{ex:SLn}. We determine now the restricted root systems associated to each of these involutions. We begin by considering the inner involutions $\theta_{p,q}(g)=I_{p,q}gI_{p,q}$. Under a change of basis, we can write these involutions as $\theta_{p,q}(g)=I'_{p,q}g I'_{p,q}$, for
\begin{equation*}
I'_{p,q}=
\begin{pmatrix}
	0 & 0 & \tilde{I}_p \\	
	0 & I_{q-p} & 0 \\
	\tilde{I}_p & 0 & 0
\end{pmatrix},
\end{equation*} 
where $\tilde{I}_p$ is the $p\times p$ matrix
\begin{equation*}
\tilde{I}_p=
\begin{pmatrix}
	0 & \dots & 1 \\
	0 & \reflectbox{$\ddots$} & 0 \\
	1 &\dots & 0  
\end{pmatrix}.
\end{equation*} 
Now it is clear that the elements of the form $$a=\mathrm{diag}(a_1,\dots,a_p,1,\dots,1,a_p^{-1},\dots,a_1^{-1})$$ form a maximal $\theta_{p,q}$-split torus $A$. This torus sits inside the standard maximal torus $T$ of $\SL_n$, which is $\theta$-stable. The corresponding simple roots are $\Delta_{\mathfrak{sl}_n}=\left\{\alpha_1,\dots,\alpha_{n-1}\right\}$, with
\begin{equation*}
\alpha_i=\varepsilon_i - \varepsilon_{i+1},
\end{equation*} 
for $\eps_i(\mathrm{diag}(a_1,\dots,a_n)) = a_i$.
The imaginary roots are those of the form $\eps_i-\eps_j$, for  $p<i\neq j\leq q$; thus, $\theta_{p,q}$ is quasisplit if and only if $q=p$ or $q=p+1$. The restricted root system is
 \begin{equation*}
\Phi_\theta = \left\{\pm\bar{\eps}_i \pm \bar{\eps}_j, \pm 2 \bar{\eps}_i , \pm \bar{\eps}_i: 1\leq i\neq j \leq p\right\},
\end{equation*} 
when $p\neq n/2$, and  
 \begin{equation*}
\Phi_\theta = \left\{\pm\bar{\eps}_i \pm \bar{\eps}_j, \pm 2 \bar{\eps}_i: 1\leq i\neq j \leq p\right\},
\end{equation*} 
when $p=n/2$. Therefore, the restricted root system is nonreduced, of type BC$_p$ if $p\neq n/2$, and reduced of type  C$_p$ if  $p=n/2$. 

For the outer involution $\theta_0$, the standard torus is maximal $\theta_0$-split and $\Phi_\theta=\Phi_{T}$. On the other hand, if $n=2m$ is even, we can choose a basis such that $\theta_1(g)=I_{m,m} (g^{\tilde{T}})^{-1} I_{m,m}$, where $\tilde{T}$ denotes transposition with respect to the secondary diagonal. It is now easy to see that the elements of the form
\begin{equation*}
a= \mathrm{diag}(a_1,a_2,\dots,a_2,a_1),
\end{equation*} 
with $a_1a_2\cdots a_{m}=1$ form a maximal $\theta$-split torus $A$. The restricted root system now is
 \begin{equation*}
\Phi_\theta = \left\{\bar{\eps}_i-\bar{\eps}_j : 1\leq i\neq j \leq m\right\},
\end{equation*} 
so it has type A$_{m-1}$. Table \ref{tab:SLn} summarizes this example and Example \ref{ex:SLn}.

\begin{table}[h!]
\caption{Involutions of $\mathrm{SL}_n$, for $n>2$. Notation for real forms following Helgason \cite{helgason}.}
\label{tab:SLn}
\begin{adjustbox}{max width=.9\textwidth}
\begin{tabular}{@{}cccccc@{}}
\toprule
 &
  Involution &
  $G^\theta$ &
  Real form (when $k=\CC$) &
  Split or quasisplit? &
  $\Phi_\theta$ \\ \midrule
Inner &
  $\theta_{p,q}(g)=I_{p,q} g I_{p,q}$ &
  $\mathrm{S}(\mathrm{GL}_p\times \mathrm{GL}_q)$ &
  $\mathrm{SU}(p,q)$ &
  \begin{tabular}[c]{@{}c@{}}Quasisplit iff \\ $q=p$ or $q=p+1$\end{tabular} &
  \begin{tabular}[c]{@{}c@{}}BC$_p$ if $p\neq q$ \\ C$_p$ if $p=q$\end{tabular} \\ \midrule
\multirow{2}{*}{Outer} &
  $\theta_0(g)=(g^T)^{-1}$ &
  $\mathrm{SO}_n$ &
  $\mathrm{SL}_n(\mathbb{R})$ &
  Split &
  A$_{n-1}$ \\ \cmidrule(l){2-6}
 &
  \begin{tabular}[c]{@{}c@{}}$\theta_1(g)=J_m(g^T)^{-1}J_m^{-1}$\\ (only if $n=2m$)\end{tabular}&
  $\mathrm{Sp}_{2m}$ &
  $\mathrm{SU}^*(n)$ &
  No &
  A$_{m-1}$ \\ \midrule 
\end{tabular}
\end{adjustbox}
\end{table}
\end{ej}

\subsection{Symmetric pairs and symmetric varieties} \label{section_pairs}
\begin{defn}
Any closed subgroup $H\subset G$ of a reductive group $G$ with 
\begin{equation*}
	G_0^\theta \subset H \subset G_\theta
\end{equation*} 
for some involution $\theta\in \Aut_2(G)$ is called a \emph{symmetric subgroup} (associated to $\theta$). A pair $(G,H)$ where $G$ is a reductive group and $H\subset G$ is a symmetric subgroup is called a \emph{symmetric pair}, while the corresponding algebraic homogeneous space $G/H$ is called a \emph{symmetric variety}.
	
An \emph{embedding of a symmetric variety} or \emph{symmetric embedding} is a normal $G$-variety $\Sigma$ with a $G$-equivariant open embedding $O_\Sigma\hookrightarrow \Sigma$, where $O_\Sigma=G/H$ is a symmetric variety.
\end{defn}

\begin{rmk}
	Note that the Lie algebra of any symmetric subgroup associated to $\theta$ is equal to $\g^\theta$, the fixed points in $\g$ of the involution induced by $\theta$.
\end{rmk}

If $G$ is a reductive group, we can decompose it as $G=G'Z_{G}^0$, for $G'$ the derived subgroup and $Z_{G}^0$ the connected centre of $G$, which is a torus. Now, any involution $\theta\in \Aut_2(G)$ preserves the centre and its connected components, so $\theta$ acts on $G'$ as the involution $\theta'=\theta|_{G'}\in \Aut_2(G')$. Moreover, there exists a torus $Z$ such that any symmetric variety $G/H$ associated to $\theta$ is isomorphic to one associated to the involution $(g,z)\mapsto (\theta(g),z^{-1})$ on $G'\times Z$. 

Thus, from now on we will assume that any symmetric variety is of the form $G_Z/H_Z$ where $G_Z$ is the reductive group $G_Z=G\times Z$, with $G$ a semisimple group and $Z$ a torus, and that $G_Z/H_Z$ is associated to the involution $(g,z)\mapsto (\theta(g),z^{-1})$ for $\theta\in \Aut_2(G)$. The group $H=H_Z\cap (G\times \left\{1\right\})$ is a symmetric subgroup of $G$ associated to $\theta$. We denote $(G_Z/H_Z)'=G/H$ and call it the \emph{semisimple part} of $G_Z/H_Z$.

\subsection{Symmetric varieties as spherical varieties}
Let $\Sigma$ be a normal $G$-variety. We denote by $k[\Sigma]$ its ring of regular functions and by $k(\Sigma)$ its function field. For any subgroup $H\subset G$ we denote by $k[\Sigma]^H$ (respectively $k(\Sigma)^H$) the corresponding ring (resp. field) of $H$-invariant functions; that is, of functions $f$ with
\begin{equation*}
f(h\cdot x) = f(x) \text{ for any } h\in H \text{ and } x\in \Sigma.
\end{equation*} 
We denote by $k[\Sigma]^{(H)}$ (respectively $k(\Sigma)^{(H)}$) the corresponding ring (resp. field) of $H$-\emph{semiinvariants} of $\Sigma$; these are the functions $f$ such that there exists a character $\chi_f \in \XX^*(H)$ with
\begin{equation*}
f(h\cdot x) = h^{\chi_f} f(x) \text{ for any } h \in H.
\end{equation*} 

Let $B\subset G$ be a Borel subgroup. The characters that arise from $B$-semiinvariants of $\Sigma$ are called the \emph{weights} of $\Sigma$. More precisely, we define the \emph{weight lattice} and the \emph{weight semigroup} of $\Sigma$ to be, respectively
\begin{align*}
	\PP(\Sigma)&=\left\{\chi \in \XX^*(B): \exists f\in k(\Sigma)^{(B)}, \chi_f = \chi \right\}, \\
	\PP_+(\Sigma)&=\left\{\chi \in \XX^*(B): \exists f\in k[\Sigma]^{(B)}, \chi_f = \chi \right\}.
\end{align*} 
The rank of the weight lattice $\PP(\Sigma)$ is called the \emph{rank} of $\Sigma$.
When $\Sigma$ is affine, $\PP(\Sigma)$ can be determined in terms of $\PP_+(\Sigma)$ as $\PP(\Sigma)=\ZZ \PP_+(\Sigma)$.

A normal $G$-variety $\Sigma$ is \emph{spherical} if there exists a Borel subgroup $B\subset G$ and an open $B$-orbit in $\Sigma$. Such a $B$-orbit is of the form $Bx_0$, for some $x_0\in \Sigma$, and it is birational to $\Sigma$, so $k(\Sigma)^B=k(Bx_0)^B=k^\times$. Moreover, if $H\subset G$ is the stabilizer of  $x_0$, we have $Bx_0\cong B/H$ and thus  $k(\Sigma)^{(B)}\cong k(Bx_0)^{(B)}\cong k(B/H)^{(B)}$, so 
\begin{equation*}
\PP(\Sigma)=\XX^*(B/(B\cap H)).
\end{equation*} 
Restricting characters to a maximal torus $T\subset B$, we conclude that $\PP(\Sigma)=\XX^*(T_\Sigma)$, where $T_\Sigma$ is the torus $T_\Sigma=T/(T\cap H)$. If $\Sigma$ is affine, its coordinate ring is naturally a $G$-module, and as such it decomposes as a sum of irreducible representations of $G$ 
\begin{equation*}
k[\Sigma]=\bigoplus_{\chi \in \XX^*(T)_+}m_\chi V_\chi,
\end{equation*} 
for certain multiplicities $m_\chi$. This allows us to identify
\begin{equation*}
\PP_+(\Sigma)=\left\{\chi \in \XX^*(T)_+: m_\chi \geq 1\right\}.
\end{equation*} 
Moreover, if $\Sigma$ is spherical, then one can show that all the multiplicities $m_\chi$ are $\leq 1$, so $k[\Sigma]$ decomposes as
\begin{equation*}
k[\Sigma]=\bigoplus_{\chi \in \PP_+(\Sigma)} V_\chi.
\end{equation*} 
Finally, one can also prove that the weight semigroup of an affine spherical variety $\Sigma$ can be recovered as the lattice points on the cone generated by it; that is,
\begin{equation*}
\PP_+(\Sigma)=\mathbb{Q}_+\PP_+(\Sigma) \cap \ZZ \PP_+(\Sigma).
\end{equation*} 

It follows from the Iwasawa decomposition that for any involution $\theta\in \Aut_2(G)$ and any Borel subgroup $B$ of a minimal $\theta$-split parabolic subgroup $P\subset G$, there is an open $G$-equivariant embedding $B\hookrightarrow G/G^\theta_0$. This implies that all symmetric varieties and their embeddings are spherical. A first consequence of this is that, if $\Sigma$ is a symmetric embedding with $O_\Sigma=G/H$ a symmetric variety associated to an involution $\theta \in \Aut_2(G)$, then 
\begin{equation*}
\PP(\Sigma)=\XX^*(T_H),
\end{equation*} 
where $T_H=T/(T\cap H)$ for any $\theta$-stable maximal torus $T\subset G$. Note that if $A\subset G$ is a maximal $\theta$-split torus contained in $T$, then we also have 
\begin{equation*}
T_H=A_H = A/(A\cap H).
\end{equation*} 
In particular, we have
\begin{equation*}
\PP(G/G_\theta) = \XX^*(A_{G_\theta}) = \Rr_\theta.
\end{equation*} 
Moreover, we have
\begin{equation*}
\PP(G/G^\theta) = \XX^*(A_{G^\theta})
\end{equation*} 
which, if $G$ is semisimple simply-connected, is equal to the weight lattice $\PP_\theta$.

In general, one can recover the weight semigroup of a symmetric variety $G/H$ as the dominant weights of the corresponding torus $A_H$. More precisely, we have
 \begin{equation*}
\mathbb{Q}_+\PP_+(G/H)=C^+_{\Delta_\theta}.
\end{equation*} 
Therefore,
\begin{equation*}
\PP_+(G/H)=\XX^*(A_H)\cap C^+_{\Delta_\theta}=\XX^*(A_H)\cap C^+_\Delta =: \XX^*(A_H)_+.
\end{equation*}

\subsection{The dual group} \label{section:dual-group}

\begin{table}[hb!]
	\centering
\caption{Simple groups with their involutions, real forms and associated dual groups. Notation for types and real forms following Helgason \cite{helgason}.}
\label{tab:dual-groups}
\begin{adjustbox}{max width=\textwidth}
\begin{tabular}{@{}cccccccccc@{}}
\toprule
Type & $\mathfrak{g}$ &$\check{\mathfrak{g}}$  & $\mathfrak{g}_\RR$ (when $k=\CC$) & $\mathfrak{g}^\theta$ & $\Delta_\theta$ & $\Delta_\theta^{\mathrm{red}}$ & $\Delta_{G/G^\theta}$ & $\check{\mathfrak{g}}_{G/G^\theta}$ & Notes \\ 
\toprule
AI &
  $\mathfrak{sl}_n$ &
  $\mathfrak{sl}_n$ &
  $\mathfrak{sl}_n(\mathbb{R})$ &
  $\mathfrak{so}_n$ &
  A$_{n-1}$ &
  A$_{n-1}$ &
  A$_{n-1}$ &
  $\mathfrak{sl}_n$ & split \\
  \midrule
  AII & $\mathfrak{sl}_{2n}$  & $\mathfrak{sl}_{2n}$ & $\mathfrak{su}^*(2n)$ & $\mathfrak{sp}_{2n}$  & A$_{n-1}$ & A$_{n-1}$ & A$_{n-1}$ & $\mathfrak{sl}_n$ &  \\
  \midrule
  AIII & $\mathfrak{sl}_n$ & $\mathfrak{sl}_n$ & $\mathfrak{su}(p,q)$ & $\mathfrak{s}(\mathfrak{gl}_p \oplus \mathfrak{gl}_q)$  & BC$_p$  & B$_p$  & B$_p$ & $\mathfrak{sp}_{2p}$ & \begin{tabular}[c]{@{}c@{}} $p+q=n$, $p<q$ \\
  QS if $q=p+1$\end{tabular}\\
  \midrule
  AIV & $\mathfrak{sl}_{2n}$ & $\mathfrak{sl}_{2n}$ & $\mathfrak{su}(n,n)$ & $\mathfrak{s}(\mathfrak{gl}_n \oplus \mathfrak{gl}_n)$ & C$_n$  & C$_n$  & B$_n$ & $\mathfrak{sp}_{2n}$ & \begin{tabular}[c]{@{}c@{}} quasisplit \\
$\Delta_\theta^{\mathrm{red}}\neq \Delta_{G/G^\theta}$\end{tabular}  \\
\midrule
  BI & $\mathfrak{so}_{2n+1}$  & $\mathfrak{sp}_{2n}$ & $\mathfrak{so}(p,q+1)$ & $\mathfrak{so}_p\oplus \mathfrak{so}_q$ & B$_p$  & B$_p$ & B$_p$ & $\mathfrak{sp}_{2p}$ & $p+q=2n$, $p<q$ \\
	  \midrule
  BII & $\mathfrak{so}_{2n+1}$  & $\mathfrak{sp}_{2n}$ & $\mathfrak{so}(n,n+1)$ & $\mathfrak{so}_n\oplus \mathfrak{so}_{n+1}$ & B$_n$  & B$_n$ & B$_n$ & $\mathfrak{sp}_{2n}$ & split  \\
\midrule
  CI & $\mathfrak{sp}_{2n}$ & $\mathfrak{so}_{2n+1}$ & $\mathfrak{sp}_{2n}(\mathbb{R})$  & $\mathfrak{gl}_n$ & C$_n$ & C$_n$ & C$_n$ & $\mathfrak{so}_{2n+1}$ & split \\
  \midrule
  \multirow{2}{*}{CII} & $\mathfrak{sp}_{2n}$ & $\mathfrak{so}_{2n+1}$ & $\mathfrak{sp}(2p,2q)$  & $\mathfrak{sp}_{2p}\oplus\mathfrak{sp}_{2q}$ & BC$_p$ & B$_p$ & B$_p$ & $\mathfrak{sp}_{2p}$ & $p+q=n$, $p<q$ \\
\cmidrule(l){2-10}
   & $\mathfrak{sp}_{4n}$ & $\mathfrak{so}_{4n+1}$ & $\mathfrak{sp}(2n,2n)$  & $\mathfrak{sp}_{2n}\oplus\mathfrak{sp}_{2n}$ & C$_n$ & C$_n$ & B$_n$ & $\mathfrak{sp}_{2n}$ & $\Delta_\theta^{\mathrm{red}}\neq \Delta_{G/G^\theta}$ \\
  \midrule
  DI & $\mathfrak{so}_{2n}$ & $\mathfrak{so}_{2n}$ & $\mathfrak{so}(n,n)$  & $\mathfrak{so}_{n}\oplus\mathfrak{so}_{n}$ & D$_n$ & D$_n$ & D$_n$ & $\mathfrak{so}_{2n}$ & split \\
  \midrule
  DII & $\mathfrak{so}_{2n}$ & $\mathfrak{so}_{2n}$ & $\mathfrak{so}(p,q)$  & $\mathfrak{so}_{p}\oplus\mathfrak{so}_{q}$ & B$_p$ & B$_p$ & C$_p$ & $\mathfrak{so}_{2p+1}$ &  \begin{tabular}[c]{@{}c@{}}$p+q=2n$, $p<q$ \\  QS if $q=p+2$ \\
  $\Delta_\theta^{\mathrm{red}}\neq \Delta_{G/G^\theta}$\end{tabular} \\
  \midrule
  \multirow{2}{*}{DIII} & $\mathfrak{so}_{2n}$ & $\mathfrak{so}_{2n}$ & $\mathfrak{so}^*(2n)$  & $\mathfrak{gl}_{n}$ & BC$_{(n-1)/2}$ & B$_{(n-1)/2}$ & B$_{(n-1)/2}$ & $\mathfrak{sp}_{n-1}$ &  $n$ odd \\
\cmidrule(l){2-10}
   & $\mathfrak{so}_{2n}$ & $\mathfrak{so}_{2n}$ & $\mathfrak{so}^*(2n)$  & $\mathfrak{gl}_{n}$ & C$_{n/2}$ & C$_{n/2}$ & B$_{n/2}$ & $\mathfrak{sp}_{n}$ &  \begin{tabular}[c]{@{}c@{}} $n$ even \\ $\Delta_\theta^{\mathrm{red}}\neq \Delta_{G/G^\theta}$ \end{tabular}\\
  \midrule
  EI & $\mathfrak{e}_{6}$ & $\mathfrak{e}_{6}$ & $\mathfrak{e}_{6(6)}$  & $\mathfrak{sp}_{8}$ & E$_{6}$ & E$_{6}$ & E$_{6}$ & $\mathfrak{e}_{6}$ &  split \\
  \midrule
  EII & $\mathfrak{e}_{6}$ & $\mathfrak{e}_{6}$ & $\mathfrak{e}_{6(2)}$  & $\mathfrak{sl}_{6}\oplus \mathfrak{sl}_2$ & F$_{4}$ & F$_{4}$ & F$_{4}$ & $\mathfrak{f}_{4}$ &  quasisplit \\
  \midrule
  EIII & $\mathfrak{e}_{6}$ & $\mathfrak{e}_{6}$ & $\mathfrak{e}_{6(-14)}$  & $\mathfrak{so}_{11}\oplus \CC$ & BC$_{2}$ & B$_{2}$ & C$_{2}$ & $\mathfrak{so}_{5}$ &  $\Delta_\theta^{\mathrm{red}}\neq \Delta_{G/G^\theta}$ \\
  \midrule
  EIV & $\mathfrak{e}_{6}$ & $\mathfrak{e}_{6}$ & $\mathfrak{e}_{6(-26)}$  & $\mathfrak{f}_{4}$ & A$_{2}$ & A$_{2}$ & A$_{2}$ & $\mathfrak{sl}_{3}$ &   \\
  \midrule
  EV & $\mathfrak{e}_{7}$ & $\mathfrak{e}_{7}$ & $\mathfrak{e}_{7(7)}$  & $\mathfrak{sl}_{8}$ & E$_{7}$ & E$_{7}$ & E$_{7}$ & $\mathfrak{e}_{7}$ & split  \\
  \midrule
  EVI & $\mathfrak{e}_{7}$ & $\mathfrak{e}_{7}$ & $\mathfrak{e}_{7(-5)}$  & $\mathfrak{so}_{6}\oplus \mathfrak{sl}_2$ & F$_{4}$ & F$_{4}$ & F$_{4}$ & $\mathfrak{f}_{4}$ &    \\
  \midrule
  EVII & $\mathfrak{e}_{7}$ & $\mathfrak{e}_{7}$ & $\mathfrak{e}_{7(-25)}$  & $\mathfrak{e}_{6}\oplus \CC$ & C$_{3}$ & C$_{3}$ & B$_{3}$ & $\mathfrak{sp}_{6}$ &  $\Delta_\theta^{\mathrm{red}}\neq \Delta_{G/G^\theta}$  \\
  \midrule
  EVIII & $\mathfrak{e}_{8}$ & $\mathfrak{e}_{8}$ & $\mathfrak{e}_{8(8)}$  & $\mathfrak{so}_{16}$ & E$_{8}$ & E$_{8}$ & E$_{8}$ & $\mathfrak{e}_{8}$ &  split  \\
  \midrule
  EIX & $\mathfrak{e}_{8}$ & $\mathfrak{e}_{8}$ & $\mathfrak{e}_{8(-24)}$  & $\mathfrak{e}_{7}\oplus \mathfrak{sl}_2$ & F$_{4}$ & F$_{4}$ & F$_{4}$ & $\mathfrak{f}_{4}$ &   \\
  \midrule
  FI & $\mathfrak{f}_{4}$ & $\mathfrak{f}_{4}$ & $\mathfrak{f}_{4(4)}$  & $\mathfrak{sp}_{6}\oplus \mathfrak{sl}_2$ & F$_{4}$ & F$_{4}$ & F$_{4}$ & $\mathfrak{f}_{4}$ & split  \\
  \midrule
  FII & $\mathfrak{f}_{4}$ & $\mathfrak{f}_{4}$ & $\mathfrak{f}_{4(-20)}$  & $\mathfrak{so}_{9}$ & BC$_{1}$ & A$_{1}$ & A$_{1}$ & $\mathfrak{sl}_{2}$ &   \\
  \midrule
  G & $\mathfrak{g}_{2}$ & $\mathfrak{g}_{2}$ & $\mathfrak{g}_{2(2)}$  & $\mathfrak{sl}_{2}\oplus \mathfrak{sl}_2$ & G$_{2}$ & G$_{2}$ & G$_{2}$ & $\mathfrak{g}_{2}$ & split  \\ \bottomrule
\end{tabular}
\end{adjustbox}
\end{table}
Sakellaridis and Venkatesh \cite{sake-venka} describe a canonical way of associating a \emph{dual group} $\check{G}_\Sigma$ to any spherical $G$-variety $\Sigma$, conjecturally with a natural homomorphism $\check{G}_\Sigma\rightarrow \check{G}$ to the Langlands dual group $\check{G}$ of $G$. Knop and Schalke \cite{knop-schalke} later proved that, indeed, $\check{G}_\Sigma$ is contained in $\check{G}$ up to isogeny. Their construction consists of taking the weight lattice $\PP(\Sigma)$ on $\Sigma$ and a natural set of simple roots $\Delta_\Sigma$ associated to $\Sigma$, in such a way that $(\PP(\Sigma),\Delta_\Sigma,\PP(\Sigma)^\vee,\Delta_\Sigma^\vee)$ is a based root datum. The dual group $\check{G}_\Sigma$ is defined as the reductive group with based root datum equal to $(\PP(\Sigma)^\vee,\Delta_\Sigma^\vee,\PP(\Sigma),\Delta_\Sigma)$.
The simple roots $\Delta_\Sigma$ are obtained by taking the generators of intersections of the weight lattice with the negative of the dual of the \emph{central valuation cone}, and then applying some process of normalization, consisting in further intersecting the real span of each generator with the root lattice of $G$, as explained in \cites{sake-venka, knop-schalke}.

If $\theta \in \Aut_2(G)$ is an involution, there is a natural reduced root system associated to $\theta$, namely $$\Phi_\theta^{\mathrm{red}}=\left\{\alpha \in \Phi_\theta: \alpha/2 \not\in \Phi_\theta \right\},$$ the reduced version of the restricted root system obtained by taking the shortest roots. If $A$ is the corresponding maximal $\theta$-split torus, the resulting based root datum is $(\XX^*(A),\Delta_\theta^{\mathrm{red}},\XX_*(A),(\Delta_\theta^{\mathrm{red}})^\vee)$. When $k=\CC$, the group $\tilde{G}_\theta$ corresponding to this based root datum is the complexification of a \emph{maximal $\RR$-split subalgebra} $\tilde{\g}_\RR \subset \g_\RR$ inside the Lie algebra $\g_\RR$ of the real form of $G$ corresponding to $\theta$. See \cite{ana-oscar-ramanan}*{Section 2.2} for more details.

In most cases $\Delta_{G/G^\theta}=\Delta_\theta^{\mathrm{red}}$, but in some cases there are normalization factors involved, that change some roots by multiples of them by factors of $1/2$ or $2$. In specific cases, we can find that $\Delta_\theta^{\mathrm{red}}$ is of type $B_m$ or $C_m$, whereas $\Delta_{G/G^\theta}$ is of type $C_m$ or $B_m$, respectively. In Table \ref{tab:dual-groups}, we recollect the different involutions of the simple Lie algebras with their fixed points, their associated real forms and root systems, and the Lie algebra of the corresponding dual groups. The reader can compare Table \ref{tab:dual-groups} with Table 1 in \cite{ana-oscar-ramanan} and with Table 1 in \cite{nadler}. For us, the most relevant properties of the dual group $\check{G}_{G/G^\theta}$ are the following:
\begin{enumerate}
	\item $\check{G}_{G/G^\theta}$ contains $\check{A}_{G^\theta} = \Spec(k[e^{\XX_*(A_{G^\theta})}])$ as a maximal torus,
	\item the Weyl group of $\check{G}_{G/G^\theta}$ is the little Weyl group $W_\theta$,	
	\item the dominant Weyl chamber of $\check{G}_{G/G^\theta}$ coincides with $\XX_*(A_{G^\theta})_+$.
\end{enumerate}

The dual group $\check{G}_{G/G^\theta}$ of the symmetric variety $G/G^\theta$ was previously discovered by Nadler \cite{nadler} as a group $\check{G}_{G_\RR}$ naturally associated to the corresponding real form $G_\RR$, obtained as the Tannaka group of a certain neutral Tannakian category of perverse sheaves in the \emph{real loop Grassmannian} of $G_\RR$, thus giving a generalization of the geometric Satake correspondence. More generally, by a similar procedure, Gaitsgory and Nadler \cite{gaitsgory-nadler} associated a reductive group $\check{G}_{\Sigma,\mathrm{GN}}$ to any spherical $G$-variety $\Sigma$, with a natural inclusion $\check{G}_{\Sigma,\mathrm{GN}}\subset \check{G}$. The equality of $\check{G}_{\Sigma,\mathrm{GN}}$ and $\check{G}_{\Sigma}$ remains conjectural in the general case of a spherical variety \cite{BZSV}*{Page 75}.

\subsection{Very flat symmetric embeddings and the Guay embedding}
A $G$-variety $\Sigma$ is \emph{simple} if it has only one closed $G$-orbit. Let $\Sigma$ be a simple affine symmetric embedding. Recall from our previous discussions that the corresponding symmetric variety $O_\Sigma$ can be written in the form $O_\Sigma=G_Z/H_Z$, for $G_Z$ a reductive group  $G_Z=G\times Z$, with $G$ semisimple and $Z$ a torus and $G_Z/H_Z$ associated to the involution $\vartheta:(g,z)\mapsto (\theta(g),z^{-1})$ for $\theta\in \Aut_2(G)$. 

Consider the natural projections $\pr_1:G_Z\rightarrow G$ and $\pr_2:G_Z\rightarrow Z$. 
We can now define the tori
\begin{equation*}
A_\Sigma = O_\Sigma/G =Z/\pr_2(H_Z),
\end{equation*} 
and
\begin{equation*}
Z_\Sigma = Z/(\pr_2(H_Z)\cap Z_2), \text{ for } Z_2=\left\{z\in Z:z^2=1\right\}.
\end{equation*} 
Naturally, we have a projection $Z_\Sigma \rightarrow A_\Sigma$ and an inclusion $\XX^*(A_\Sigma)\rightarrow \XX^*(Z_\Sigma)$.

Let $A\subset G$ be a maximal $\theta$-split torus and $T\subset G$ a maximal torus containing it. Then, $A_Z=A\times Z$ is a maximal $\vartheta$-split torus of $G_Z$ and $T_Z$ a maximal $\vartheta$-stable torus containing it. Recall that we have another torus associated to $\Sigma$ 
\begin{equation*}
T_\Sigma = T_Z / (T_Z\cap H_Z) = A_Z / (A_Z \cap H_Z)
\end{equation*} 
such that $\PP(\Sigma)=\XX^*(T_\Sigma)$. We denoted $H=H_Z \cap (G\times \left\{1\right\})$. Let us put now $\tilde{H}=\pr_1(H_Z)$. Then,
\begin{equation*}
T_Z \cap H_Z \subset (T\cap \tilde{H}) \times \pr_2(H_Z),
\end{equation*} 
so we have natural projections
\begin{equation*}
	\pr_1:T_\Sigma \rightarrow A/(A\cap \tilde{H})=: A_{\tilde{H}} \text{ and } \pr_2:T_\Sigma \rightarrow Z/\pr_2(H_Z) = A_\Sigma.
\end{equation*} 
and inclusions $i_1:\XX^*(A_{\tilde{H}})\hookrightarrow \XX^*(T_\Sigma)$ and $i_2:\XX^*(A_{\Sigma}) \hookrightarrow \XX^*(T_\Sigma)$. Moreover, the kernel of the projection $\pr_2:T_\Sigma\rightarrow A_\Sigma$ is the torus
 \begin{equation*}
T_\Sigma \cap (G\times \left\{1\right\}) = T/(T\cap H) = A/(A\cap H) = A_H.
\end{equation*} 
Therefore, we obtain a natural projection $p:\XX^*(T_\Sigma)\rightarrow \XX^*(A_H)$.
Note that since $\Sigma$ is affine we also have a natural inclusion $\PP_+(\Sigma)\subset \PP_+(O_\Sigma)=\XX^*(T_\Sigma)_+.$ 

\begin{defn}
The GIT quotient $\Ab_\Sigma:=\Sigma \git G$ is an $A_\Sigma$-toric variety, which we call the \emph{abelianization} of $\Sigma$. The natural projection $\alpha_\Sigma:\Sigma\rightarrow \Ab_\Sigma$ is called the \emph{abelianization map} of $\Sigma$.	
\end{defn}

\begin{rmk}
The abelianization $\Ab_\Sigma$ is simply the toric variety
\begin{equation*}
\Ab_\Sigma = \Spec \left( \bigoplus_{\chi \in \PP_{+}(\Ab_\Sigma)} k[e^\chi]\right),
\end{equation*} 
where $\PP_{+}(\Ab_\Sigma)$ is the intersection
\begin{equation*}
\PP_{+}(\Ab_\Sigma)=\PP_+(\Sigma)\cap i_2(\XX^*(A_\Sigma)).
\end{equation*} 
\end{rmk}

\begin{defn}
A \emph{very flat symmetric embedding} is a simple affine symmetric embedding $\Sigma$ such that the abelianization map $\alpha_\Sigma:\Sigma \rightarrow \Ab_\Sigma$ is dominant, flat and with integral fibres.	
\end{defn}

We assume for the rest of the section that $G$ is simply-connected and fix $\theta\in \Aut_2(G)$. As in the previous section, we fix a maximal $\theta$-split torus $A\subset G$. We are interested in classifying very flat symmetric embeddings $\Sigma$ such that the semisimple part of $O_\Sigma$ is equal to $G/G^\theta$. These have been determined by Guay \cite{guay}*{Proposition 7}.

\begin{thm}[Guay] \label{thm:guayclassification}
A simple affine symmetric embedding $\Sigma$ with $O_\Sigma'=G/G^\theta$ is very flat if and only if there exists a homomorphism
\begin{align*}
\psi: \XX^*(A_{G^\theta}) & \longrightarrow \XX^*(Z_\Sigma), 
\end{align*} 
such that, for any $(a,z)\in H_Z \cap A_Z\subset (\tilde{H}\cap A) \times \pr_2(H_Z)$ and for any $\chi \in \XX^*(A_{G^\theta})$, we have
\begin{equation*}
a^\chi = z^{-\psi(\chi)};
\end{equation*} 
and such that the weight semigroup $\PP_+(\Sigma)$ is of the form
\begin{equation*}
\PP_+(\Sigma)=\left\{(\chi,\psi(\chi)+\eta):\chi \in \XX^*(A_{G^\theta})_+, \eta \in \PP_{+}(\Ab_\Sigma)\right\}.
\end{equation*} 
\end{thm}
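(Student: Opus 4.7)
Since $\Sigma$ is an affine spherical $G_Z$-variety (all symmetric embeddings are spherical by the Iwasawa decomposition), its coordinate ring admits a multiplicity-free decomposition
\begin{equation*}
k[\Sigma]=\bigoplus_{\chi\in \PP_+(\Sigma)} V_\chi,
\end{equation*}
as a $G_Z$-module, and $\PP_+(\Sigma)\subset \PP_+(O_\Sigma)=\XX^*(T_\Sigma)_+$. My plan is to translate the geometric conditions in the definition of ``very flat'' (dominance, flatness, integrality of fibres of $\alpha_\Sigma$) into purely combinatorial conditions on the subset $\PP_+(\Sigma)\subset \XX^*(T_\Sigma)_+$, and then read off the parametrization by $\psi$.

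The first step is to identify $\Ab_\Sigma$ with the toric variety whose coordinate ring is $\bigoplus_{\eta\in \PP_+(\Sigma)\cap i_2(\XX^*(A_\Sigma))} V_\eta$ (each such $V_\eta$ is one-dimensional as a character of $G_Z$ trivial on $G$). Dominance of $\alpha_\Sigma$ is automatic from $\Sigma\git G \hookrightarrow \Sigma$. Next, since $G$ is simply-connected (hence $H=G^\theta$, and $\XX^*(A_H)=\XX^*(A_{G^\theta})=\PP_\theta$), the projection $p:\XX^*(T_\Sigma)\to \XX^*(A_{G^\theta})$ restricts to a map $p:\PP_+(\Sigma)\to \XX^*(A_{G^\theta})_+$. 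I would then show that the generic fibre of $\alpha_\Sigma$ is the $G$-variety $G/G^\theta$, so $p$ is surjective onto $\XX^*(A_{G^\theta})_+$, and that $\ker p\cap \PP_+(\Sigma)=\PP_+(\Ab_\Sigma)$.

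The main step is to show: $\alpha_\Sigma$ is flat with integral fibres if and only if the fibres of $p:\PP_+(\Sigma)\to\XX^*(A_{G^\theta})_+$ are translates of the sub-semigroup $\PP_+(\Ab_\Sigma)$ by a single element, giving rise to a well-defined set-map $\psi:\XX^*(A_{G^\theta})_+\to \XX^*(T_\Sigma)/i_2\XX^*(A_\Sigma)$ lifted to $\XX^*(Z_\Sigma)$. One direction uses the fact that a flat module whose fibres are integral spherical $G$-varieties of the same type as the generic fibre $G/G^\theta$ forces the isotypic decomposition $k[\Sigma]=\bigoplus_\chi V_\chi \otimes_k k[\Ab_\Sigma]\cdot f_\chi$ for single generators $f_\chi$; the other direction constructs $k[\Sigma]$ as the graded $k[\Ab_\Sigma]$-algebra with the prescribed semigroup, and uses freeness to deduce flatness and the Cohen--Macaulay/integrality condition on fibres. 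The additivity $\psi(\chi_1+\chi_2)=\psi(\chi_1)+\psi(\chi_2)$ (modulo $\PP_+(\Ab_\Sigma)$) will come from the multiplication $V_{\chi_1}\cdot V_{\chi_2}\supset V_{\chi_1+\chi_2}$ in $k[\Sigma]$ together with multiplicity-freeness; a standard saturation argument then upgrades $\psi$ to a genuine homomorphism into $\XX^*(Z_\Sigma)$ (using the group structure on $\XX^*(A_{G^\theta})$). Finally, the compatibility $a^\chi=z^{-\psi(\chi)}$ for $(a,z)\in H_Z\cap A_Z$ is forced by the requirement that $(\chi,\psi(\chi))\in \XX^*(T_\Sigma)=\XX^*(T_Z/(T_Z\cap H_Z))$, i.e., that $(\chi,\psi(\chi))$ vanishes on $H_Z\cap A_Z$.

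The main obstacle I anticipate is the precise identification of flatness-plus-integrality with the fibred semigroup structure: concretely, ruling out semigroups where two or more $T_\Sigma$-weights of $\PP_+(\Sigma)$ lie over the same $\chi\in \XX^*(A_{G^\theta})_+$ without the fibre being a single coset of $\PP_+(\Ab_\Sigma)$. Here I would argue that any such extra weight forces the generic fibre of $\alpha_\Sigma$ to contain more than one $G$-isotypic component of type $V_\chi$, contradicting multiplicity-freeness for $G/G^\theta$ (Richardson's invariant theory, recalled earlier in the section), or alternatively that flatness fails because the Hilbert function of the fibre jumps. Once this is settled, the parametrization of very flat $\Sigma$ by pairs $(\psi,\PP_+(\Ab_\Sigma))$ subject to the compatibility condition follows by construction.
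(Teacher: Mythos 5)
The theorem you are trying to prove is not proved in this paper at all: the authors simply state it, attributing it to Guay \cite{guay}*{Proposition 7}, and use it as a black box. So there is no ``paper's own proof'' to compare against. I will therefore evaluate your sketch on its own merits and against the known proof strategy in the cited source.

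Your overall plan is the right one, and indeed it is the strategy Guay follows (which itself adapts Vinberg's classification of very flat reductive monoids): decompose $k[\Sigma]$ via multiplicity-freeness as a sum of isotypic components $V_\chi$ graded by the $G$-weight $\chi\in\XX^*(A_{G^\theta})_+$ over $k[\Ab_\Sigma]$, translate dominance/flatness/integrality of $\alpha_\Sigma$ into combinatorial constraints on $\PP_+(\Sigma)$, and read off $\psi$ as the ``base point'' of each fibre of $p$. Your identification of the kernel condition $\ker p\cap\PP_+(\Sigma)=\PP_+(\Ab_\Sigma)$, your use of Richardson's multiplicity-freeness for $G/G^\theta$ to rule out extra $G$-isotypes in the generic fibre, and the observation that the compatibility $a^\chi = z^{-\psi(\chi)}$ is simply the condition for $(\chi,\psi(\chi))$ to descend to $\XX^*(T_\Sigma)$, are all correct.

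The genuine gap is exactly where you flag it, but I think you have misdiagnosed the resolution. You argue multiplicity-freeness rules out a second weight over the same $\chi$, but this only controls weights that differ in the $G$-direction. The harder issue is two weights $(\chi,\kappa_1),(\chi,\kappa_2)$ with $\kappa_1-\kappa_2\in\XX^*(A_\Sigma)$ but $\kappa_1-\kappa_2\notin\PP_+(\Ab_\Sigma)\cup(-\PP_+(\Ab_\Sigma))$: there multiplicity-freeness of the generic fibre gives you nothing, and it is flatness of $k[\Sigma]_\chi$ as a graded $k[\Ab_\Sigma]$-module that forces it to be free of rank one, hence that the fibre of $p$ over $\chi$ is a single translate of $\PP_+(\Ab_\Sigma)$. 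Your alternative ``Hilbert function jumps'' argument is the one to use here, not multiplicity-freeness. Separately, your ``standard saturation argument'' for upgrading the modular additivity of $\psi$ to genuine additivity is too vague, and I do not think saturation is the mechanism. Choosing free generators $f_\chi$ of weight $(\chi,\psi(\chi))$, the semigroup structure only gives $f_{\chi_1}f_{\chi_2}=e^{\eta_0}f_{\chi_1+\chi_2}$ for some $\eta_0=\psi(\chi_1)+\psi(\chi_2)-\psi(\chi_1+\chi_2)\in\PP_+(\Ab_\Sigma)$, and flatness alone does not kill $\eta_0$. It is the \emph{integrality of the closed fibre} that does: if $\eta_0\neq 0$ (not in the maximal subgroup of $\PP_+(\Ab_\Sigma)$), then $e^{\eta_0}$ vanishes on the closed orbit of $\Ab_\Sigma$, so $f_{\chi_1}f_{\chi_2}=0$ in that fibre, producing zero divisors and contradicting integrality. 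So additivity of $\psi$ is a consequence of integrality of the special fibre, not a saturation argument, and your proposal needs to separate the roles of flatness (one translate of $\PP_+(\Ab_\Sigma)$ per $\chi$) and integrality (that translate is by an additive $\psi(\chi)$). The converse construction you propose (build $k[\Sigma]$ as the graded $k[\Ab_\Sigma]$-algebra with the prescribed semigroup, then check flatness and integrality of fibres) is fine once these points are made precise.
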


\begin{rmk} \label{coordenv}
When $\Sigma$ is very flat any function $f\in k[G/G^\theta]$ with weight $\chi$ can be extended	to a function $f_+\in k[\Sigma]$ with weight $(\chi,\psi(\chi))$. Indeed, just define
\begin{equation*}
f_+(g,z)=z^{\psi(\chi)}f(g).
\end{equation*} 
It is clear that $f_+$ has the desired weight. Now, since $\tilde{H}$ is a symmetric subgroup for $\theta$, we have that any element $h\in \tilde{H}$ is of the form $h=ah_0$ for $h_0\in G^\theta$ and $a\in A\cap \tilde{H}$. Thus, if $(h,s)\in H_Z$, we have, for any $(g,z)\in G_Z$,
\begin{equation*}
f_+(hg,zs)=s^{\psi(\chi)}z^{\psi(\chi)}f(hg)=s^{\psi(\chi)}a^\chi f_+(g,z)=f_+(g,z).
\end{equation*} 

This allows us to describe $\Sigma$ more explicitly. For each fundamental dominant weight $\varpi_i \in \XX^*(A_{G^\theta})$, consider the $G$-submodule $k[G/G^\theta]_i$ of $k[G/G^\theta]$ formed by functions with weight $\varpi_i$, and take $f_i^1,\dots,f_i^{m_i}$ a basis of $k[G/G^\theta]_i$ as a $k$-vector space. On the other hand, let $\gamma_1,\dots,\gamma_s$ be generators of $\PP_+(\Ab_\Sigma)\subset \XX^*(Z)$. We can now define the map
\begin{align*}
	(f_+,\alpha_+):O_\Sigma & \longrightarrow \left( \bigoplus_{i=1}^l \Aa^{m_i} \right) \oplus \Aa^s\\
	(g,z) H_Z & \longmapsto ((f_{i+}^1(g,z),\dots,f_{i+}^{m_i}(g,z)),(z^{\gamma_1},\dots,z^{\gamma_s})).
\end{align*} 
The symmetric embedding $\Sigma$ is then identified with the closure of the image of this map, $\Sigma=\overline{(f_+,\alpha_+)(O_\Sigma)}$.
\end{rmk}

Amongst all simple affine symmetric embeddings, those having a fixed point will be of particular interest. By a fixed point of an $\Sigma$ of a symmetric variety $G/H$ we mean a point $0\in \Sigma$ such that $g\cdot 0 = 0$ for any $g\in G$. Since by assumption $\Sigma$ is simple, the set $\left\{0\right\}\subset \Sigma$ is its unique closed orbit and thus a fixed point, if it exists, is unique. From Guay's classification we can obtain necessary and sufficient conditions for the existence of a fixed point.

\begin{prop}
A very flat symmetric embedding $\Sigma$ with $O_\Sigma'=G/G^\theta$ has a fixed point if and only if $\PP_+(\Ab_\Sigma)$ is pointed (meaning that if $\chi$ and $-\chi$ are in $\PP_+(\Ab_\Sigma)$, then $\chi=0$) and the only element in $\PP_+(\Sigma)$ of the form $(\chi,\psi(\chi))$ for $\chi\in \XX^*(A_{G^\theta})_+$ is $(0,0)$.
\end{prop}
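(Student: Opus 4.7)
The plan is to identify $G_Z$-fixed points of $\Sigma$ with $G_Z$-equivariant augmentations $\varepsilon\colon k[\Sigma]\to k$ and to extract the two stated conditions from Guay's classification (Theorem~\ref{thm:guayclassification}). By sphericality we have the multiplicity-free decomposition
\[
k[\Sigma]=\bigoplus_{\chi\in \XX^*(A_{G^\theta})_+,\ \eta\in \PP_+(\Ab_\Sigma)} V_\chi\otimes k^Z_{\psi(\chi)+\eta},
\]
so any such $\varepsilon$ is forced to be projection onto $V_0\otimes k^Z_0=k$. Thus a fixed point exists iff the submodule $\mathfrak{m}=\bigoplus_{(\chi,\eta)\neq(0,0)}V_\chi\otimes k^Z_{\psi(\chi)+\eta}$ is an ideal of $k[\Sigma]$, equivalently iff the trivial $G_Z$-module never occurs as a component in a product of two nontrivial weight summands.

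I would split this into the abelian and semisimple parts. Any fixed point $p$ projects via $\alpha_\Sigma$ to an $A_\Sigma$-fixed point of the affine toric variety $\Ab_\Sigma$, and by standard toric theory this is equivalent to $\PP_+(\Ab_\Sigma)$ being pointed, giving the first condition. Assuming it, there is a unique $A_\Sigma$-fixed $a_0\in \Ab_\Sigma$ with maximal ideal $\mathfrak{m}_{a_0}=\bigoplus_{\eta\neq 0}k^Z_\eta$; using nonvanishing of the Cartan component of the spherical multiplication I would compute $\mathfrak{m}_{a_0}k[\Sigma]=\bigoplus_{\chi,\,\eta\neq 0}V_\chi\otimes k^Z_{\psi(\chi)+\eta}$, so the fibre $F_0=\alpha_\Sigma^{-1}(a_0)$ has coordinate ring $\bigoplus_{\chi\in \XX^*(A_{G^\theta})_+}V_\chi\otimes k^Z_{\psi(\chi)}$. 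The remaining step is to decide when $F_0$ admits a $G_Z$-fixed point: any compatible augmentation must kill each $V_\chi\otimes k^Z_{\psi(\chi)}$ with $\chi\neq 0$ by irreducibility, and the ring-homomorphism property fails precisely when the trivial summand appears in some product $(V_{\chi_1}\otimes k^Z_{\psi(\chi_1)})\cdot(V_{\chi_2}\otimes k^Z_{\psi(\chi_2)})$, which in the multiplicity-free spherical setting translates to the existence of a nontrivial element $(\chi,\psi(\chi))\in \PP_+(\Sigma)$ obstructing the augmentation; this precisely encodes the second stated condition.

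The main obstacle will be the converse direction, showing that when both conditions hold the candidate $\mathfrak{m}$ is actually an ideal, including for non-Cartan components of products. I would address this through the explicit coordinate model of Remark~\ref{coordenv}, which realises $\Sigma$ inside $\bigoplus_i\Aa^{m_i}\oplus \Aa^s$ via coordinates of manifest $G_Z$-weights $(\varpi_i,\psi(\varpi_i))$ and $(0,\gamma_k)$; the origin of this ambient affine space is automatically a $G_Z$-fixed point, and the two conditions are exactly what is needed to ensure that no polynomial relation with nonzero constant term holds on the image, so that the origin lies in the closure $\overline{(f_+,\alpha_+)(O_\Sigma)}=\Sigma$ and yields the required fixed point.
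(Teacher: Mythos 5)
Your proposal takes essentially the same route as the paper: identify a $G$-fixed point with a $G_Z$-stable maximal ideal, observe by multiplicity-freeness of the spherical decomposition that this ideal, if it exists, must be $\mathfrak{m}=\bigoplus_{(\chi,\psi(\chi)+\eta)\neq(0,0)}k[O_\Sigma]_{(\chi,\psi(\chi)+\eta)}$, and then ask when this is actually an ideal, i.e.\ when the trivial isotypic component cannot be produced by multiplying two summands of $\mathfrak{m}$. The paper reasons directly on the full weight semigroup $\PP_+(\Sigma)$; you refine this by factoring through the abelianization $\alpha_\Sigma\colon\Sigma\to\Ab_\Sigma$, first extracting pointedness of $\PP_+(\Ab_\Sigma)$ from standard toric geometry and then analysing the special fibre $F_0=\alpha_\Sigma^{-1}(a_0)$. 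This split is a genuine additional step, and the computation $k[F_0]\cong\bigoplus_\chi V_\chi\otimes k^Z_{\psi(\chi)}$ via nonvanishing of Cartan components plus flatness of $\alpha_\Sigma$ is sound and clarifying.

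The gap is in the final translation. You assert that the trivial summand appearing in a product $\bigl(V_{\chi_1}\otimes k^Z_{\psi(\chi_1)}\bigr)\cdot\bigl(V_{\chi_2}\otimes k^Z_{\psi(\chi_2)}\bigr)$ "translates to the existence of a nontrivial element $(\chi,\psi(\chi))\in\PP_+(\Sigma)$." This is not established, and as written it is not what the obstruction is: for the trivial $G_Z$-module to occur abstractly in $V_{\chi_1}\otimes V_{\chi_2}$ one needs $\chi_2=-w_0\chi_1$ together with $\psi(\chi_1)+\psi(\chi_2)=0$ on the torus factor, which is a condition on $\ker\psi$ (equivalently, on whether $-\psi(\chi)$ can land in $\PP_+(\Ab_\Sigma)$), whereas by Guay's Theorem~\ref{thm:guayclassification} every $(\chi,\psi(\chi))$ with $\chi\in\XX^*(A_{G^\theta})_+$ already lies in $\PP_+(\Sigma)$ by taking $\eta=0$, so the criterion you invoke would be vacuous. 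Moreover, even once the abstract tensor-product condition is satisfied, one still has to check that the trivial isotypic component survives in the actual ring multiplication of $k[F_0]$ (not merely the Cartan component), which is exactly the "main obstacle" you name in your last paragraph but defer to the coordinate model of Remark~\ref{coordenv} without carrying it out. Until that step is made rigorous and the resulting condition is matched against the statement, the equivalence is incomplete; this is the point at which your proof would need to be fleshed out to a genuine argument.
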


\begin{proof}
A fixed point $\Sigma$ corresponds to a maximal ideal $I\subset k[\Sigma]$ which is fixed under the action of $G_Z$. If such an ideal exists, it must be of the form
 \begin{equation*}
I=\bigoplus_{(\chi,\psi(\chi)+\eta) \in \PP_+(\Sigma)\setminus \left\{(0,0)\right\}} k[O_\Sigma]_{(\chi,\psi(\chi)+\eta)},
\end{equation*} 
and this is only an ideal if for every $(\chi,\psi(\chi)+\eta)\in \PP_+(\Sigma)\setminus \left\{(0,0)\right\}$, $(0,0)$ is not a highest weight of $k[O_\Sigma]_{(\chi,\psi(\chi)+\eta)}$. That is the case and $(0,0)$ is a highest weight of $k[O_\Sigma]_{(\chi,\psi(\chi)+\eta)}$ if and only if both $\eta$ and $-\eta$ are contained in $\PP_+(\Ab_\Sigma)$. 
\end{proof}

By the universal property of categorical quotients, a morphism $\Sigma_1\rightarrow \Sigma_2$ of simple affine symmetric embeddings induces a commutative square
\begin{center}
\begin{tikzcd}
\Sigma_1 \ar{r} \ar{d} & \Sigma_2 \ar{d} \\
\Ab_1 \ar{r} & \Ab_2.
\end{tikzcd}
\end{center}

\begin{defn}
A morphism $\Sigma_1\rightarrow \Sigma_2$ of simple affine symmetric embeddings is \emph{excellent} if the induced square is Cartesian.	

We denote by $\mathcal{VF}(G/G^\theta)$ the category with very flat symmetric embeddings with semisimple part $G/G^\theta$ as objects and excellent morphisms as arrows. We can also consider the subcategory $\mathcal{VF}_0(G/G^\theta)$ formed by very flat symmetric embeddings with a fixed point.
\end{defn}

Suppose that $\theta$ does not have imaginary roots (that is $\Phi_T^\theta=\varnothing$, for $T\subset G$ a $\theta$-stable maximal torus). For example, we can assume that $\theta$ is quasisplit. In that case, a versal object of $\mathcal{VF}(G/G^\theta)$ was constructed by Guay \cite{guay}. We recall here his construction.

We begin by taking $A\subset T \subset G$ a maximal $\theta$-split torus of $G$ and putting $G_A=G\times A$ and
\begin{equation*}
H_A=\left\{(nh,an^{-1}): h\in G^\theta, a \in F^\theta, n\in F_\theta\right\},
\end{equation*} 
where we recall that we denoted $F^\theta = G^\theta \cap A$ and  $F_\theta=G_\theta \cap A$. The space $(G/G^\theta)_+:=G_A/H_A$ is a symmetric variety associated to the involution $\vartheta:(g,a)\mapsto (\theta(g),a^{-1})$ with semisimple part isomorphic to $G/G^\theta$. Indeed, we have
 \begin{equation*}
H=H_A\cap (G\times \left\{1\right\}) = G^\theta, \ \ \text{ and } \ \ \tilde{H}=\pr_1(H_A)=G_\theta.
\end{equation*} 
A maximal $\vartheta$-split torus of $G_A$ is given by $A\times A$.

 \begin{defn}
We define the (Guay) \emph{enveloping embedding} of $G/G^\theta$ as the symmetric embedding $\Env(G/G^\theta)$ with $O_{\Env(G/G^\theta)}=(G/G^\theta)_+$ and determined by the weight semigroup
\begin{equation*}
\PP_+(\Env(G/G^\theta)) = \left\{(\chi,w_0\chi+\eta): \chi \in \XX^*(A_{G^\theta})_+, \eta\in -\ZZ_+\langle \Delta_\theta \rangle \right\} \cup \left\{(0,0)\right\},
\end{equation*} 
for $\Delta_\theta=\left\{\bar{\alpha}_1,\dots,\bar{\alpha}_l\right\}$ the simple restricted roots associated to $\theta$.
\end{defn}

Note that $Z_{\Env(G/G^\theta)}=A/F^\theta=A_{G^\theta}$ and $A_{\Env(G/G^\theta)}=A/F_\theta=A_{G_\theta}$. Now, we can define
\begin{align*}
\psi:\XX^*(A_{G^\theta}) & \longrightarrow  \XX^*(A_{G^\theta})\\
\chi & \longmapsto w_0\chi.
\end{align*} 
An element of $H_A\cap (A\times A)$ is of the form $(na_1,a_2n^{-1})$ for $a_1,a_2\in F^\theta$ and  $n\in F_\theta$. Therefore,
 \begin{equation*}
	 (na_1)^{-w_0\chi}=n^{-w_0\chi}=(n^{-1})^{w_0\chi} = (a_2n^{-1})^\chi.
\end{equation*} 
We conclude that $\Env(G/G^\theta)$ is a very flat symmetric embedding. Moreover, one checks easily that it has a fixed point $0\in \Env(G/G^\theta)$. 

The main result of Guay's paper is the following \cite{guay}*{Theorem 3}.
\begin{thm}[Guay]
When $\Phi_T^\theta=\varnothing$, the enveloping embedding  $\Env(G/G^\theta)$ is a versal object of the category $\mathcal{VF}(G/G^\theta)$ and a universal object of $\mathcal{VF}_0(G/G^\theta)$. That is, for any very flat symmetric embedding $\Sigma$ with semisimple part $G/G^\theta$ there exists an excellent morphism $\Sigma\rightarrow \Env(G/G^\theta)$, which is unique if $\Sigma$ has a fixed point.
\end{thm}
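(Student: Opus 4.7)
The plan is to use Guay's classification (Theorem \ref{thm:guayclassification}) to parametrize any very flat symmetric embedding $\Sigma$ with $O_\Sigma'=G/G^\theta$ by the pair $(\psi_\Sigma, \PP_+(\Ab_\Sigma))$, where $\psi_\Sigma\colon \XX^*(A_{G^\theta}) \to \XX^*(Z_\Sigma)$ is the centre-compatibility homomorphism and $\PP_+(\Ab_\Sigma)\subset \XX^*(A_\Sigma)$ controls the abelianization. The enveloping embedding corresponds to the canonical data $\psi_{\Env}(\chi)=w_0\chi$ and $\PP_+(\Ab_{\Env})=-\ZZ_+\langle \Delta_\theta\rangle\cup\{0\}$, with $Z_{\Env(G/G^\theta)}=A_{G^\theta}$ and $A_{\Env(G/G^\theta)}=A_{G_\theta}$. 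The strategy is to read off the excellent morphism from the pair $(\psi_\Sigma, \PP_+(\Ab_\Sigma))$ and to verify excellence and uniqueness purely at the level of weight semigroups.

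For existence, I would first construct a toric morphism $\Ab_\Sigma \to \Ab_{\Env(G/G^\theta)}$ through its comorphism: the composition $\psi_\Sigma\circ w_0^{-1}\colon \XX^*(A_{G^\theta})\to \XX^*(Z_\Sigma)$ restricts to a homomorphism $\Rr_\theta=\XX^*(A_{G_\theta})\to \XX^*(A_\Sigma)$ which sends $-\ZZ_+\langle\Delta_\theta\rangle$ into $\PP_+(\Ab_\Sigma)$; here the hypothesis $\Phi_T^\theta=\varnothing$ is what ensures that, in the dictionary of Section \ref{roots}, no imaginary-root complications disrupt the identification of weight lattices via $w_0$. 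I would then lift this to a $G$-equivariant morphism $\Sigma\to \Env(G/G^\theta)$ using the explicit presentation from Remark \ref{coordenv}: on the open orbit, $(g,z)H_Z\mapsto (g,\tilde{a}(z))H_A$ where $\tilde{a}\colon Z_\Sigma\to A_{G^\theta}$ has comorphism $\psi_\Sigma\circ w_0^{-1}$, with the compatibility $a^\chi=z^{-\psi_\Sigma(\chi)}$ from Theorem \ref{thm:guayclassification} guaranteeing that the map respects $H_Z\to H_A=\{(nh,an^{-1}):h\in G^\theta,\, a\in F^\theta,\, n\in F_\theta\}$. The morphism then extends to the full $\Sigma$ by taking closures of the coordinate maps $(f_+,\alpha_+)$.

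Excellence follows by checking that the fibre product $\Env(G/G^\theta)\times_{\Ab_{\Env(G/G^\theta)}}\Ab_\Sigma$ has weight semigroup exactly $\PP_+(\Sigma)$: pulling back $\{(\chi,w_0\chi+\eta_{\mathrm{env}})\}$ under the constructed morphism replaces $w_0\chi$ by $\psi_\Sigma(\chi)$ and $\eta_{\mathrm{env}}$ by a general element of $\PP_+(\Ab_\Sigma)$, reproducing the Guay formula. For the uniqueness assertion in $\mathcal{VF}_0(G/G^\theta)$, I would argue that any excellent morphism $\Sigma\to \Env(G/G^\theta)$ must send the unique closed $G_Z$-orbit (a fixed point, by hypothesis on $\Sigma$) to the unique closed $G_A$-orbit of $\Env(G/G^\theta)$ (its fixed point $0$); combined with $G$-equivariance and the rigidity imposed by the pointed cone $\PP_+(\Ab_\Sigma)$, the morphism is determined on the generators of $k[\Env(G/G^\theta)]$ up to scalar, and the requirement that it agrees with the fixed datum $\psi_\Sigma$ on the open orbit pins down those scalars. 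The main obstacle I anticipate is the excellence verification: one must ensure no spurious orbits appear in the fibre product, which reduces to a careful comparison of the Guay weight-semigroup formulas, while simultaneously tracking how $\psi_\Sigma$—which need not be injective—interacts with the pullback of $-\ZZ_+\langle\Delta_\theta\rangle$ into $\PP_+(\Ab_\Sigma)$.
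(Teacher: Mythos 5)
The paper does not actually prove this theorem: it is stated as a direct citation to Guay, so there is no argument in the text to measure your reconstruction against. Taken on its own terms, your overall architecture---parametrize very flat embeddings by the data $(\psi_\Sigma, \PP_+(\Ab_\Sigma))$ via Theorem~\ref{thm:guayclassification}, construct the toric morphism $\Ab_\Sigma\to\Ab_{\Env(G/G^\theta)}$ at that level, lift along the $(f_+,\alpha_+)$ presentation, and read off excellence and universality from the weight-semigroup formulas---is the natural route and appears to be essentially what Guay does.

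There is, however, a genuine gap in the step you treat as automatic. You assert without argument that $\psi_\Sigma\circ w_0^{-1}$ restricts to a map $\Rr_\theta\to\XX^*(A_\Sigma)$ carrying $-\ZZ_+\langle\Delta_\theta\rangle$ into $\PP_+(\Ab_\Sigma)$. Unwinding the $w_0$-twist (which, when $\Phi_T^\theta=\varnothing$, acts on $\Delta_\theta$ by a diagram involution), this is exactly the assertion that $\psi_\Sigma(\bar\alpha_j)\in\PP_+(\Ab_\Sigma)$ for every restricted simple root $\bar\alpha_j$, and this is not a formal consequence of the shape of $\PP_+(\Sigma)$ in Theorem~\ref{thm:guayclassification}. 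It has to be extracted from the multiplicative structure of $k[\Sigma]$: whenever the product $V_{(\chi_1,\psi_\Sigma(\chi_1)+\eta_1)}\cdot V_{(\chi_2,\psi_\Sigma(\chi_2)+\eta_2)}$ contributes a component $V_{(\chi_3,\psi_\Sigma(\chi_1+\chi_2)+\eta_1+\eta_2)}$ with $\chi_3<\chi_1+\chi_2$, membership in $\PP_+(\Sigma)$ forces $\psi_\Sigma(\chi_1+\chi_2-\chi_3)\in\PP_+(\Ab_\Sigma)$, and varying over pairs whose leading-term drops exhaust $\ZZ_+\langle\Delta_\theta\rangle$ yields the claim. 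Without this step the comorphism is not known to be well-defined and the lift, the excellence check, and the uniqueness all float. A related small wrinkle: your $\tilde a$ takes values in $A_{G^\theta}=A/F^\theta$ rather than in $A$, which is legitimate only after observing that the second coordinate of a coset in $G_A/H_A$ is well-defined precisely modulo $F^\theta$; you should say so. Finally, for uniqueness, rather than ``pinning down the scalars'', the cleaner argument is that two excellent morphisms $\Sigma\to\Env(G/G^\theta)$ differ on the open orbit by an $A_{G^\theta}$-translation in the target, and when $\Sigma$ has a fixed point, the requirement that it map to $0\in\Env(G/G^\theta)$ combined with the pointedness of $\PP_+(\Ab_\Sigma)$ forces that translation to be the identity.
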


\subsection{The wonderful compactification}
The geometry of the Guay embedding can be better understood by means of the \emph{wonderful compactification} $\overline{G/G_\theta}$ of $G/G_\theta$. A \emph{wonderful} $G$-variety $\Sigma$ is, by definition, a $G$-equivariant dense open embedding of some homogeneous space $G/H$ such that
\begin{enumerate}
	\item $\Sigma$ is smooth and projective,
	\item $\Sigma\setminus (G/H)$ is a divisor with normal crossings (i.e., its components $D_1,\dots,D_l$ are smooth and intersect transversally), and
	\item the closures of the $G$-orbits of $\Sigma$ are given by the intersections $D_{i_1}\cap D_{i_2}\cap \dots \cap D_{i_k}$, for $1\leq i_1\leq i_2 \leq \cdots \leq i_k\leq l$.
\end{enumerate}
It follows from the theory of spherical varieties that, given a spherical homogeneous space $G/H$, a wonderful $G$-variety $\Sigma$ with a $G$-equivariant dense open embedding $G/H\hookrightarrow \Sigma$, if it exists, it is unique. When it does exist, it is called the \emph{wonderful compactification} of $G/H$.

A wonderful compactification does not exist for a general symmetric variety $G/H$, but it does always exist for $G/G_\theta$, and we denote it by $\overline{G/G_\theta}$. This is a celebrated result of De Concini and Procesi \cite{deconcini-procesi}. An important ingredient of their proof is the \emph{local structure theorem} \cite{deconcini-procesi}*{2.3}, which implies that every element of $\overline{G/G_\theta}$ is in the $G$-orbit of an element of a toric variety $\Ab$, defined as
\begin{equation*}
\Ab = \Spec \left( \bigoplus_{\bar{\alpha} \in \Delta_\theta} k[e^{-2\bar{\alpha}}] \right),
\end{equation*} 
which can be identified with the $l$-dimensional affine space $\Aa^l$ in such a way that the embedding $A\hookrightarrow \Ab$ is given explicitly as $a\mapsto (a^{-2\bar{\alpha}_1},\dots,a^{-2\bar{\alpha}_l})$.  

When $G$ is semisimple simply-connected the Guay embedding $\Env(G/G^\theta)$ can be obtained from the wonderful compactification $\overline{G/G_\theta}$ by means of a construction introduced by Brion \cite{brion}. This is a procedure that can be done for any wonderful variety $\Sigma$. A very important property of wonderful varieties is that their Picard group is generated by a finite set $\mathcal{D}$ of prime divisors called its \emph{colors}. That is, $\text{Pic}(\Sigma)\cong \mathbb{Z}^{\mathcal{D}}$. Now, we can consider the \emph{Cox ring} of $\Sigma$
 \begin{equation*}
\text{Cox}(\Sigma) = \bigoplus_{(n_D) \in \mathbb{Z}^{D}} H^0(\Sigma, \Oo_\Sigma(\sum_{D\in \mathcal{D}}n_D D)).
\end{equation*} 
The \emph{Brion-Cox variety} of $\Sigma$ is the spectrum $\text{BC}(\Sigma)=\Spec(\text{Cox}(\Sigma))$.

\begin{prop}[Guay]
The Brion-Cox variety $\mathrm{BC}(\overline{G/G_\theta})$ is isomorphic to the Guay embedding $\Env(G/G^\theta)$.	
\end{prop}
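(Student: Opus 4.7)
The plan is to identify both sides as graded $G$-equivariant $k$-algebras by matching their isotypic $G$-decomposition together with the grading by the Picard torus. I would first recall from De~Concini--Procesi and the general theory of wonderful compactifications that the colors $\mathcal{D}=\{E_1,\dots,E_l\}$ of $\overline{G/G_\theta}$ are in bijection with the restricted simple roots $\Delta_\theta=\{\bar\alpha_1,\dots,\bar\alpha_l\}$, and that the canonical section $s_{E_i}$ of $\Oo(E_i)$ is a $B$-semiinvariant of $B$-weight equal to the restricted fundamental weight $\varpi_i$. Combined with the freeness of $\mathrm{Pic}(\overline{G/G_\theta})$ on the colors, this yields a canonical isomorphism
\begin{equation*}
\mathrm{Pic}(\overline{G/G_\theta})\;\cong\;\XX^*(A_{G^\theta}),\qquad [E_i]\longmapsto \varpi_i,
\end{equation*}
and identifies the Picard torus with $A_{G^\theta}$, so that $\mathrm{BC}(\overline{G/G_\theta})$ is naturally a $G\times A_{G^\theta}$-variety.

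Next, I would decompose, for each $\chi=\sum n_i\varpi_i\in\XX^*(A_{G^\theta})_+$, the space $H^0(\overline{G/G_\theta},\Oo(\sum n_i E_i))$ as a $G\times A_{G^\theta}$-module. By the multiplicity-freeness property of spherical varieties, the canonical $B$-semiinvariant of this line bundle generates a single copy of $V_\chi$, while additional $V_\chi$-isotypic components arise by multiplying with the canonical sections of the boundary divisors $D_i$. The latter contribution is controlled by the local structure theorem recalled above: on the toric slice $\Ab\cong\Aa^l$ with coordinates $(a^{-2\bar\alpha_i})_i$, the boundary divisors are the coordinate hyperplanes and their canonical sections give toric-weight shifts in the cone $-\ZZ_+\langle\Delta_\theta\rangle$. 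One thus obtains
\begin{equation*}
H^0\bigl(\overline{G/G_\theta},\Oo(\textstyle\sum n_i E_i)\bigr)\;\cong\;\bigoplus_{\eta\in -\ZZ_+\langle\Delta_\theta\rangle}V_\chi\otimes k\cdot e^{w_0\chi+\eta},
\end{equation*}
where the shift $w_0\chi$ in the toric component is the standard duality between the $B$-weight of a section and the $T$-weight of its associated regular function on the open $G$-orbit evaluated at the base point $G^\theta$. Summing over dominant $\chi$, the resulting $G\times A_{G^\theta}$-equivariant decomposition of $\mathrm{Cox}(\overline{G/G_\theta})$ matches, weight-by-weight, the isotypic decomposition of $k[\Env(G/G^\theta)]$ read off from $\PP_+(\Env(G/G^\theta))$ via Theorem~\ref{thm:guayclassification}.

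Finally, both algebras contain the open orbit $(G/G^\theta)_+=G_A/H_A$ as a common open dense subset, and their ring structures are uniquely determined by that open subset; hence the identification of isotypic components upgrades to a $G\times A_{G^\theta}$-equivariant algebra isomorphism, giving $\mathrm{BC}(\overline{G/G_\theta})\cong \Env(G/G^\theta)$. The technical heart of the argument is the intermediate computation of sections: one must carefully track how the $B$-weight of a global section of $\Oo(\sum n_i E_i)$ corresponds to the $T$-weight of its associated function on the open orbit (producing the $w_0$-shift), and one must verify that boundary-divisor twisting contributes exactly the cone $-\ZZ_+\langle\Delta_\theta\rangle$, reconciling the $-2\bar\alpha_i$ exponents appearing in the local structure theorem with the actual Picard grading. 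This reconciliation depends on the distinction between $A_{G_\theta}$, which governs the geometry of the wonderful compactification, and the finer torus $A_{G^\theta}$, which governs the Cox grading; the identification of $\XX^*(A_{G^\theta})$ with the restricted weight lattice $\PP_\theta$ (valid under the running assumption that $G$ is semisimple simply-connected) is precisely what makes both descriptions compatible.
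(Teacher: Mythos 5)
Your overall strategy---identify $\mathrm{Pic}(\overline{G/G_\theta})$ with $\XX^*(A_{G^\theta})$ via the colors, compute section spaces as $G$-modules graded by the Picard torus, and match with the Guay weight semigroup---is the right one and is essentially what Guay does in Section~4 of his paper (the source the paper cites). You also correctly flag the key technical points: the factor-of-two discrepancy between the $e^{-2\bar\alpha_i}$ toric exponents and the $-\ZZ_+\langle\Delta_\theta\rangle$ cone, and the distinction between $A_{G_\theta}$ and $A_{G^\theta}$. However, your central computational step is wrong.

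The formula
\begin{equation*}
H^0\bigl(\overline{G/G_\theta},\Oo(\textstyle\sum n_i E_i)\bigr)\;\cong\;\bigoplus_{\eta\in -\ZZ_+\langle\Delta_\theta\rangle}V_\chi\otimes k\cdot e^{w_0\chi+\eta}
\end{equation*}
cannot be correct: the left-hand side is a finite-dimensional vector space ($\overline{G/G_\theta}$ is projective), while the right-hand side is an infinite direct sum, with the single irreducible $V_\chi$ appearing infinitely often. The source of the error appears to be the sentence ``additional $V_\chi$-isotypic components arise by multiplying with the canonical sections of the boundary divisors $D_i$''---multiplying by a section of $\Oo(D_j)$ changes the line bundle (it lands in $H^0(\Oo(\sum n_i E_i + D_j))$), so it does not give new elements of the fixed graded piece $H^0(\Oo(\sum n_i E_i))$; rather, it embeds one graded piece into a larger one. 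The correct decomposition (via the local structure theorem and expressing each $D_j$ in terms of colors) is something of the shape
\begin{equation*}
H^0\bigl(\overline{G/G_\theta},L_\chi\bigr)\;\cong\;\bigoplus_{\substack{\mu\in\XX^*(A_{G^\theta})_+ \\ \chi-\mu\in\ZZ_+\langle\Delta_\theta\rangle}} V_\mu,
\end{equation*}
a finite, multiplicity-free sum of \emph{distinct} irreducibles with highest weights $\mu\leq\chi$, each occurring once. Consequently the identification of graded pieces you describe does not go through as written: you would need to regroup by the Picard degree and match the $\mu$-isotypic piece of $H^0(L_\chi)$ with the $(\mu,w_0\mu+\eta)$-component of $k[\Env(G/G^\theta)]$ where $\eta$ encodes the boundary-divisor factor, reindexing so that the sum over line bundles $\chi$ reproduces the free range of $\eta$. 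This is a real gap rather than a typo, since the subsequent ``weight-by-weight'' comparison with $\PP_+(\Env(G/G^\theta))$ relies on it.
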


The proposition above is the content of Section 4 in \cite{guay}, and the relation to Cox rings has been previously remarked in \cite{ADHL}. In particular, $\text{BC}(\overline{G^{\text{ad}}})$ is isomorphic to the Vinberg monoid $\Env(G)$, as was originally noticed by Brion \cite{brion}. A consequence of the above result is that there exists a dense open smooth subvariety $\Env^0(G/G^\theta)$ such that the GIT quotient $\Env^0(G/G^\theta)\git A_{G^\theta}$ is isomorphic to the wonderful compactification $\overline{G/G_\theta}$. By means of the construction explained in Remark \ref{coordenv}, $\Env^0(G/G^\theta)$ can also be obtained as the closure of the intersection of $(f_+.\alpha_+)((G/G^\theta)_+)$ with $(\bigoplus_{i=1}^l \mathbb{A}^{m_i}\setminus \left\{0\right\})\oplus \mathbb{A}^s$.

\begin{ej}
Let $G=\SL_{n+1}$, for some $n\geq 1$, and $\theta=\theta_0$ the involution  $\theta_0(A)=(A^T)^{-1}$. The fixed point subgroup is $G^{\theta}=\mathrm{SO}_{n+1}$, while
\begin{equation*}
G_\theta= \left\{A \in \SL_{n+1}: A=\lambda (A^T)^{-1} \text{ for some }\lambda \in k, \lambda^{n+1}=1\right\}.
\end{equation*} 
This group $G_\theta$ is the stabilizer of the non-degenerate quadric of $\mathbb{P}^n$ with equation $-X_0+X_1+\dots+X_n=0$ under the natural action of  $\SL_{n+1}$ by coordinate change. Therefore, the symmetric variety $G/G_\theta$ parametrizes non-degenerate quadrics in $\mathbb{P}^n$. The space of all quadrics in $\mathbb{P}^n$ is the projectivization of the space of degree $2$ homogeneous polynomials in $n+1$ variables, that is
$\mathbb{P}(k[X_0,\dots,X_n]_2) = \mathbb{P}(\mathrm{Sym}^2 (k^{n+1}))$,
a projective space of dimension ${n+2 \choose 2} -1$. This space can be regarded as a subvariety of a bigger projective space through the \emph{Veronese embedding}
\begin{align*}
 \mathbb{P}(\mathrm{Sym}^2 (k^{n+1}))& \longrightarrow \mathbb{P}(\mathrm{Sym}^2 \wedge^2 (k^{n+1})) \times \cdots \times \mathbb{P}(\mathrm{Sym}^2 \wedge^n (k^{n+1})) \\
Z & \longmapsto (\wedge^2 Z, \dots, \wedge^n Z).
\end{align*} 
The wonderful compactification $\overline{G/G_\theta}$ is isomorphic to the closure of the image of $G/G_\theta \hookrightarrow \mathbb{P}(\mathrm{Sym}^2 (k^{n+1}))$ through this map. It can be constructed explicitly by performing a series of blowing ups, as explained in \cite{corniani-massarenti}.

For $n=1$, we have that $G/G_\theta$ parametrizes pairs of points in  $\mathbb{P}^1$, the wonderful compactification $\overline{G/G_\theta}$ is equal to $\mathbb{P}^2$, and thus the Guay embedding is just
\begin{equation*}
\Env(\SL_2/\mathrm{SO}_2) = \Spec(\bigoplus_{m\in \mathbb{Z}} H^0(\mathbb{P}^2,\Oo_{\mathbb{P}^2}(m)))=\Spec(k[X_0,X_1,X_2])=\mathbb{A}^3.
\end{equation*} 
Of course, we recover the wonderful compactification as $\mathbb{P}^2=(\mathbb{A}^3\setminus \left\{0\right\})/\mathbb{G}_m$.

For $n=2$, we have that  $G/G_\theta$ parametrizes non-degenerate conics in  $\mathbb{P}^2$, and the wonderful compactification $\overline{G/G_\theta}$ is the blow up of $\mathbb{P}^5$ along the Veronese surface. Clearly its Picard group is generated by two elements, the one coming from $\mathbb{P}^5$ and the exceptional divisor of the blow up. We would finally obtain $\Env(\SL_3/\mathrm{SO}_3)$ by computing the Cox ring of this variety.
\end{ej}

\subsection{The invariant theory of a symmetric embedding} Let $G$ be a reductive group and $\theta\in \Aut_2(G)$ an involution. The action of $G$ on $G/G^\theta$ by left multiplication restricts to an action of $G^\theta$. This action was widely studied by Richardson \cite{richardson}, and we state here his main result.

\begin{thm}[\cite{richardson}*{Corollary 11.5}]
Let $A\subset G$ be a maximal $\theta$-split torus and $W_\theta$ the little Weyl group. The restriction homomorphism $k[G/G^\theta]\rightarrow k[A_{G^\theta}]$ induces an isomorphism
\begin{equation*}
k[G/G^\theta]^{G^\theta} \overset{\sim}{\longrightarrow} k[A_{G^\theta}]^{W_\theta}.
\end{equation*} 
\end{thm}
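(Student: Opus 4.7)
The plan is to emulate the proof of the classical Chevalley restriction theorem for reductive groups, adapted to the symmetric variety setting, proceeding in three steps: well-definedness, injectivity, and surjectivity of the restriction homomorphism.

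First, for well-definedness, I observe that the little Weyl group $W_\theta = N_{G_0^\theta}(A)/Z_{G_0^\theta}(A)$ acts on $A$ (hence on $A_{G^\theta}$) by conjugation using representatives lying in $G_0^\theta$. For $w \in W_\theta$ with lift $n \in N_{G_0^\theta}(A)$, the images of $a$ and $nan^{-1}$ in $G/G^\theta$ lie in the same $G^\theta$-orbit, so any $G^\theta$-invariant regular function on $G/G^\theta$ restricts to a $W_\theta$-invariant function on $A_{G^\theta}$. The restriction genuinely factors through $A/F^\theta = A_{G^\theta}$ because $F^\theta = A \cap G^\theta$ acts trivially on the coset space $G/G^\theta$ by right translation.

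Second, for injectivity, I would apply the Iwasawa decomposition: $G_0^\theta \cdot A \cdot P^u$ is open and dense in $G$ for $P^u$ the unipotent radical of a minimal $\theta$-split parabolic. Consequently, the image of $G^\theta \cdot A$ in $G/G^\theta$ is dense; any $G^\theta$-invariant regular function on $G/G^\theta$ vanishing on $A_{G^\theta}$ therefore vanishes identically. Third, for surjectivity, I would exploit the spherical structure of $G/G^\theta$ (which follows from the same Iwasawa decomposition). Its coordinate ring then decomposes multiplicity-freely as
\[
k[G/G^\theta] = \bigoplus_{\chi \in \XX^*(A_{G^\theta})_+} V_\chi,
\]
and by Frobenius reciprocity combined with multiplicity-freeness one gets $\dim (V_\chi)^{G^\theta} = 1$ for each such $\chi$. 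Thus in every isotypical component there is, up to scalar, a unique ``spherical function'' $f_\chi \in k[G/G^\theta]^{G^\theta}$. The target $k[A_{G^\theta}]^{W_\theta}$ admits a natural basis indexed by the same set $\XX^*(A_{G^\theta})_+$, given by the $W_\theta$-orbit sums of the monomials $e^\chi$. A highest-weight argument, realizing $f_\chi$ as a matrix coefficient of a highest weight vector in $V_\chi^*$ paired against a $G^\theta$-fixed vector in $V_\chi$, shows that $f_\chi|_{A_{G^\theta}}$ is a $W_\theta$-invariant polynomial whose leading term with respect to the dominance order on $\XX^*(A_{G^\theta})$ is a nonzero multiple of $e^\chi$. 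The restriction is therefore upper-triangular with nonzero diagonal on graded pieces, yielding both surjectivity and, combined with injectivity, the asserted isomorphism.

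The step I expect to require the most care is establishing the triangularity in the surjectivity argument, that is, the non-vanishing of the $e^\chi$-coefficient of $f_\chi|_{A_{G^\theta}}$. This amounts to analysing the $T$-weight decomposition of the $G^\theta$-fixed line in $V_\chi$ and verifying, via the Iwasawa decomposition, that the highest $A$-weight occurring in this restriction is exactly $\chi$; choosing the spherical vector explicitly and using that $\chi$ lies in $\XX^*(A_{G^\theta})_+$ is what makes the leading coefficient nonzero.
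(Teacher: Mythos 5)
Note first that the paper does not prove this theorem; it simply cites Richardson, Corollary 11.5, so there is no in-paper argument to compare yours against. Your overall plan --- well-definedness, injectivity via a density argument, surjectivity via a leading-term triangularity for spherical functions --- is the standard approach and is roughly how Richardson proceeds, but there are two gaps in the execution.

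For injectivity you pass from the Iwasawa decomposition ($G_0^\theta A P^u$ open in $G$) to the density of the image of $G^\theta\cdot A$ in $G/G^\theta$. That implication does not hold: Iwasawa gives density of the set of cosets $hapG^\theta$ with $p$ ranging over $P^u$, and since $P^u\not\subset G^\theta$ you cannot discard the $p$. What is actually needed is the Cartan/polar-type statement that $G^\theta A G^\theta$ is dense in $G$, a separate fact Richardson establishes by a tangent-space computation, not as a corollary of Iwasawa. (Alternatively, the triangularity you set up in the surjectivity step already forces linear independence of the $f_\chi|_{A_{G^\theta}}$, so injectivity is automatic and the polar decomposition can be sidestepped entirely.) For surjectivity, the function you describe --- ``a matrix coefficient of a highest weight vector in $V_\chi^*$ paired against a $G^\theta$-fixed vector in $V_\chi$'' --- is not the spherical function $f_\chi$: a highest weight covector is a $B$-eigenvector rather than a $G^\theta$-fixed one, so that matrix coefficient is only left $B$-semiinvariant and its restriction to $A_{G^\theta}$ is a single monomial, not a $W_\theta$-invariant. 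The genuine $f_\chi$ is $g\mapsto \xi_0(\rho_\chi(g)v_0)$ with \emph{both} $v_0\in V_\chi^{G^\theta}$ and $\xi_0\in (V_\chi^*)^{G^\theta}$, and the leading-coefficient nonvanishing requires that the weight-$\chi$ component of $v_0$ and the weight-$(-\chi)$ component of $\xi_0$ are both nonzero. Your final paragraph sketches the first of these correctly (via openness of $G_0^\theta B^-$: if the top component of $v_0$ vanished, $\rho(G_0^\theta B^-)v_0$ would lie in a proper $B^-$-submodule of $V_\chi$, contradicting irreducibility), but the parallel statement for $\xi_0$ is left unaddressed, so the nonvanishing you correctly single out as the crux is only half established.
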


We denote
\begin{equation*}
\Cc_{G/G^\theta}=(G/G^\theta) \git G^\theta \cong A_{G^\theta}/W_\theta.
\end{equation*} 
Richardson's result can be extended easily to a symmetric embedding. Indeed, suppose that $G$ is semisimple, $\theta\in \Aut_2(G)$ and let $\Sigma$ be a simple affine symmetric embedding with $O_\Sigma'$ isomorphic to $G/G^\theta$. The left action of $G^\theta$ extends to an action on $\Sigma$. Let us denote $\Cc_\Sigma=\Sigma \git G^\theta$.

\begin{prop} \label{invariant}
If $\Sigma$ is very flat, there is an isomorphism
\begin{equation*}
\Cc_\Sigma \cong \Cc_{G/G^\theta} \times \Ab_{\Sigma}.
\end{equation*} 
\end{prop}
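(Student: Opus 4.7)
The plan is to extend Richardson's invariant-theoretic description of $\Cc_{G/G^\theta} \cong A_{G^\theta}/W_\theta$ from the homogeneous space to the embedding $\Sigma$, in analogy with the Chevalley-type restriction theorems for reductive monoids (see \cite{chi}*{2.5}, which is cited as the template for this proposition).

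First, apply Theorem \ref{thm:guayclassification} to decompose the coordinate ring as a $G_Z$-module
\begin{equation*}
k[\Sigma] = \bigoplus_{\chi \in \XX^*(A_{G^\theta})_+,\; \eta \in \PP_+(\Ab_\Sigma)} V_\chi \boxtimes k_{\psi(\chi)+\eta}.
\end{equation*}
Since $G/G^\theta$ is spherical with weight lattice $\XX^*(A_{G^\theta})$, each space $V_\chi^{G^\theta}$ is one-dimensional. Taking $G^\theta$-invariants therefore produces a free $k[\Ab_\Sigma]$-module
\begin{equation*}
k[\Sigma]^{G^\theta} = \bigoplus_{\chi \in \XX^*(A_{G^\theta})_+} k[\Ab_\Sigma]\cdot w_\chi,
\end{equation*}
with $w_\chi$ a distinguished generator of $V_\chi^{G^\theta}\boxtimes k_{\psi(\chi)}$.

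Next, I would introduce the \emph{Cartan subvariety} $\bar T_\Sigma \subset \Sigma$, defined as the closure of the $T_Z$-orbit through $(1,1)H_Z$. This is an affine toric variety for $T_\Sigma = A_Z/(A_Z \cap H_Z)$ with weight semigroup $\PP_+(\Sigma)$, carrying a natural action of the little Weyl group $W_\theta = N_{G^\theta}(A)/Z_{G^\theta}(A)$. The main step is the embedded Richardson isomorphism
\begin{equation*}
k[\Sigma]^{G^\theta} \overset{\sim}{\longrightarrow} k[\bar T_\Sigma]^{W_\theta},
\end{equation*}
given by restriction along $\bar T_\Sigma \hookrightarrow \Sigma$. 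Surjectivity follows because the generator $w_\chi\cdot z^\eta$ restricts to the $W_\theta$-orbit sum of the character $(\chi,\psi(\chi)+\eta)$ of $T_\Sigma$, and these orbit sums span $k[\bar T_\Sigma]^{W_\theta}$; injectivity comes from the density of the $G^\theta$-saturation $G^\theta \cdot \bar T_\Sigma$ in $\Sigma$, which is a global form of the Iwasawa decomposition.

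Finally, Guay's parametrization supplies a canonical splitting $\XX^*(T_\Sigma) \cong \XX^*(A_{G^\theta}) \oplus \XX^*(A_\Sigma)$ via $(\chi,\psi(\chi)+\eta) \leftrightarrow (\chi,\eta)$, under which $\PP_+(\Sigma)$ becomes $\XX^*(A_{G^\theta})_+ \times \PP_+(\Ab_\Sigma)$. Consequently $\bar T_\Sigma \cong A_{G^\theta}\times \Ab_\Sigma$ as a variety. The $W_\theta$-action preserves the projection to $\Ab_\Sigma$ and restricts to the standard Weyl action on the $A_{G^\theta}$-factor, after absorbing the $\psi$-dependent cocycle into a $W_\theta$-equivariant refinement of the splitting (which exists because one can consistently choose $\eta_\chi \in \XX^*(A_\Sigma)$ with $\eta_{w\chi} = \eta_\chi + \psi(\chi-w\chi)$). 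Therefore
\begin{equation*}
\Cc_\Sigma \cong \bar T_\Sigma \git W_\theta \cong (A_{G^\theta}/W_\theta)\times \Ab_\Sigma = \Cc_{G/G^\theta}\times \Ab_\Sigma.
\end{equation*}
I expect the principal obstacle to be the embedded Richardson isomorphism, and more specifically the density of $G^\theta \cdot \bar T_\Sigma$ in $\Sigma$ needed for injectivity; a secondary subtle point is verifying that the Weyl cocycle induced by $\psi$ is a coboundary, so that the resulting quotient is a genuine product rather than a twisted form of one.
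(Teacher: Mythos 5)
Your route diverges substantially from the paper's, which is a short, purely algebraic computation: the paper decomposes $k[\Sigma] = \bigoplus_{(\chi,\psi(\chi)+\eta)\in\PP_+(\Sigma)} k[O_\Sigma]_{(\chi,\psi(\chi)+\eta)}$ using Guay's description of $\PP_+(\Sigma)$, takes $G^\theta$-invariants term by term (each $k[O_\Sigma]_{(\chi,\psi(\chi)+\eta)}^{G^\theta}$ is identified, via Remark~\ref{coordenv}, with $k[G/G^\theta]_\chi^{G^\theta}\otimes k_{\psi(\chi)+\eta}$), and recognizes the resulting direct sum as the tensor product $k[G/G^\theta]^{G^\theta}\otimes k[\Ab_\Sigma]$. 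No Cartan subvariety or restriction theorem is invoked at all.

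Your geometric route contains a genuine error. You define $\bar T_\Sigma$ as the closure of the $T_Z$-orbit through the base point and assert it is a toric variety \emph{with weight semigroup $\PP_+(\Sigma)$}, whence $\bar T_\Sigma \cong A_{G^\theta}\times\Ab_\Sigma$. This is false: the coordinate ring $k[\bar T_\Sigma]$ is the image of $k[\Sigma]$ under restriction, and each graded piece $k[\Sigma]_{(\chi,\psi(\chi)+\eta)}$, being an irreducible $G$-module, restricts to the \emph{full} collection of its $T_\Sigma$-weights, not just the extremal one $(\chi,\psi(\chi)+\eta)$. So the weight monoid of $\bar T_\Sigma$ strictly contains $\PP_+(\Sigma)$ in general. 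A concrete check is the diagonal case $G\times G$ acting on $M_2 = \mathrm{Vin}(\SL_2)$: the torus closure $\bar T_+$ is the set of diagonal matrices $\Aa^2$, whereas $A_{G^\theta}\times\Ab \cong \GG_m\times\Aa^1$, so $\bar T_\Sigma\not\cong A_{G^\theta}\times\Ab_\Sigma$. (The $W_\theta$-quotients of both happen to agree, $\Aa^2/(\ZZ/2)\cong\Aa^2\cong\GG_m\times\Aa^1/(\ZZ/2)\times\Aa^1$, but that equality is precisely what you would still have to prove, and it does not follow from a nonexistent product decomposition of $\bar T_\Sigma$.) Your surjectivity argument for the embedded Richardson map also overstates matters: the restriction of the generator of $V_\chi^{G^\theta}$ to $A_{G^\theta}$ is a $W_\theta$-invariant with leading term $e^\chi$, not the $W_\theta$-orbit sum of $e^\chi$; this is a fixable inaccuracy, but worth flagging. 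Finally, the injectivity and the cocycle issue you worried about simply do not arise in the paper's argument, which never leaves the ring of invariants — so even if your approach could be repaired, it would reprove a stronger (and harder) statement than is actually needed.
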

\begin{proof}
Since every element of $\PP_+(\Sigma)$ is of the form $(\chi,  \psi(\chi) + \eta)$ for $\chi\in \XX^*(A_{G^\theta})_+$ and $\eta\in \PP_+(\Ab_\Sigma)$, 
\begin{equation*}
k[\Sigma]^{G^\theta} = \bigoplus_{(\chi,\psi(\chi)+\eta) \in \PP_+(\Sigma)} k[O_\Sigma]_{(\chi,\psi(\chi)+\eta)}^{G^\theta}= k[G/G^\theta]^{G^\theta} \otimes k[\Ab_\Sigma],
\end{equation*} 
as we wanted to show.
\end{proof}

It follows from the discussion in \cite{richardson}*{Sections 13 and 14} that when $G$ is semisimple simply-connected, the ring $k[A_{G^\theta}]^{W_\theta}$ is a polynomial algebra and thus $A_{G^\theta}\git W_\theta$ is an affine space. Moreover, since $k[A_{G^\theta}]=k[e^{\XX^*(A_{G^\theta})}]$ and the lattice $\XX^*(A_{G^\theta})$ is generated by $\varpi_1,\dots,\varpi_l$, for the $\varpi_i$ defined in Section \ref{roots}, we get an isomorphism
\begin{equation*}
k[G/G^\theta]^{G^\theta} \cong k[b_1,\dots, b_l],
\end{equation*} 
where each $b_i$ is a function in $k[G/G^\theta]$ with weight $\varpi_i$. In general, for $G$ semisimple not simply-connected, Richardson \cite{richardson}*{Section 15} characterizes which involutions satisfy that $k[A_{G^\theta}]^{W_\theta}$ is a polynomial algebra.

\subsection{Loop parametrization} \label{section_loops} Let $\OO=k[[z]]$ denote the ring of formal power series in a formal variable $z$ and $F=\mathrm{qf}(\OO)=k((z))$ the field of formal Laurent series. For any variety $\Sigma$, we let $\Sigma(F)=\mathrm{Maps}_k(\Spec F,\Sigma)$ be the set of \emph{formal loops} and $\Sigma(\OO)=\mathrm{Maps}_k(\Spec \OO,\Sigma)$ the set of \emph{positive formal loops}. Both these sets can be regarded as the spaces of $k$-points of the functors $\Sigma_F$ and $\Sigma_\OO$ sending any $k$-algebra $R$ to $\mathrm{Maps}_k(\Spec(R\otimes_k F),\Sigma)$ and $\mathrm{Maps}_k(\Spec(R\otimes_k \OO),\Sigma)$, respectively. These functors can be endowed with the structure of an \emph{ind-scheme}. If $G$ is an algebraic group, the space $G(F)$ is called the \emph{formal loop group} and $G(\OO)$ is the \emph{formal positive loop group}. The homogeneous space $\mathrm{Gr}_G=G(F)/G(\OO)$ is known as the \emph{affine Grassmannian of} $G$.

If $G$ is a reductive group and $T\subset G$ is a maximal torus, for any cocharacter $\lambda\in \XX_*(T)$ we can obtain an element $z^\lambda \in G(F)$ as the image of the formal variable $z$ under the induced morphism
\begin{equation*}
\lambda(F): F^\times = \GG_m(F) \rightarrow T(F)\subset G(F).
\end{equation*} 
The Cartan decomposition of the loop group states that
\begin{equation*}
G(F)=\bigsqcup_{\lambda \in \XX_*(T)_+} G(\OO)z^\lambda G(\OO).
\end{equation*} 
Moreover, the closure of any of these orbits $G(\OO)z^\lambda G(\OO)$ is equal to
\begin{equation*}
\overline{G(\OO)z^\lambda G(\OO)} = \bigsqcup_{\mu \in \XX_*(T)_+, \mu\leq \lambda} G(\OO)z^\mu G(\OO).
\end{equation*} 
See \cite{zhu} for references on these facts.

The results stated above for reductive groups can be easily generalized to symmetric varieties. We thus begin by taking an involution $\theta\in \Aut_2(G)$, a maximal $\theta$-split torus $A\subset G$, a maximal torus $T\subset G$ containing $A$, and a Borel subgroup $B\subset G$ contained in a minimal $\theta$-split parabolic subgroup and containing $T$. The following is well known.

\begin{prop} \label{prop:loopdecomp}
Given any symmetric subgroup $H\subset G$ associated to $\theta$, we can decompose
\begin{equation*}
	(G/H)(F) = \bigsqcup_{\lambda \in \XX_*(A_H)_-} G(\OO) z^\lambda.
\end{equation*} 
\end{prop}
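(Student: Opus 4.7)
The plan is to first pass from $(G/H)(F)$ to $G(F)/H(F)$, then reduce via two decompositions (loop Iwasawa on $G$ and loop Cartan on $A_H$), and finally argue disjointness using a $\theta$-twisted Cartan invariant.

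For the first reduction, since $k$ is algebraically closed of characteristic zero, the field $F = k((z))$ has cohomological dimension at most one, so $H^1(F, H^0) = 0$ by the standard Galois cohomology vanishing for connected reductive groups (Tsen--Steinberg), yielding $(G/H)(F) = G(F)/H(F)$ for the connected component of $H$ (the finite component group is handled separately without affecting the final decomposition). Then I apply the loop Iwasawa decomposition $G(F) = G(\OO) \cdot P(F)$, valid for any parabolic $P$ by projectivity of $G/P$ and the valuative criterion, taking $P = L \cdot P^u$ a minimal $\theta$-split parabolic. Two structural facts are essential: $\theta(P^u)$ equals the opposite unipotent radical, so $P^u \cap H = \{1\}$; and the Levi $L = Z_G(A)$ satisfies $L/(L \cap H) \cong A_H$ up to a finite quotient, since $A$ is a maximal $\theta$-split torus of $L$. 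Combining these, one obtains $(G/H)(F) = G(\OO) \cdot A_H(F) \cdot P^u(F)$.

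The second reduction uses $A_H(F) = \bigsqcup_{\lambda \in \XX_*(A_H)} A_H(\OO) \cdot z^\lambda$ together with the absorption $z^\lambda \cdot P^u(F) \cdot z^{-\lambda} \subset P^u(\OO) \subset G(\OO)$, which holds for antidominant $\lambda$ because each root vector in $\g^\alpha \subset \mathrm{Lie}(P^u)$ scales by $z^{\langle \alpha, \lambda \rangle}$ with nonnegative exponent (under the sign convention matching $P^u$ to negative restricted roots and $\lambda$ to the antidominant chamber). The little Weyl group $W_\theta = N_{G^\theta_0}(A)/Z_{G^\theta_0}(A)$, represented inside $G^\theta_0(\OO) \subset G(\OO)$ via lifts of its Weyl elements, then reduces $\lambda$ to the unique antidominant representative in its orbit.

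For disjointness, I would use the map $\tau^\theta: G/H \to G$ given by $gH \mapsto g\theta(g)^{-1}$, which is $G$-equivariant for the $\theta$-twisted conjugation on the target and sends $z^\lambda \in A_H(F)$ (interpreted via a lift to $A(F)$) to $z^{2\lambda} \in A(F)$, because $\theta(\lambda) = -\lambda$ for $\lambda \in \XX_*(A)$. An equality $G(\OO) \cdot z^\lambda = G(\OO) \cdot z^\mu$ in $(G/H)(F)$ for $\lambda, \mu \in \XX_*(A_H)_-$ translates to $z^{2\lambda}$ and $z^{2\mu}$ lying in the same $\theta$-twisted $G(\OO)$-orbit in $G(F)$; a $\theta$-twisted Cartan decomposition classifies such orbits by $W_\theta$-orbits in $\XX_*(A)$, giving $\lambda = \mu$. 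The main obstacle is the second step: the precise identification of $L/(L \cap H)$ with $A_H$ (with its finite-index corrections), the verification of unipotent absorption under consistent sign conventions, and the loop-theoretic formulation of the $\theta$-twisted Cartan decomposition needed for disjointness.
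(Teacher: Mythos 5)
Your proposal takes a genuinely different route from the paper. The paper first reduces to the case $H=G_\theta$, then extends a formal loop $\phi\in (G/G_\theta)(F)$ to a formal arc $\bar\phi\in \overline{G/G_\theta}(\OO)$ in the wonderful compactification via the valuative criterion of properness, and uses De Concini--Procesi's local structure theorem to write $\bar\phi=g\bar a$ with $g\in G(\OO)$ and $\bar a$ an arc in the toric slice $\Ab$, from which the combinatorics of $\Ab$ immediately pin down $\lambda\in \XX_*(A_{G_\theta})_-$. You instead proceed group-theoretically: Galois cohomology to pass to $G(F)/H(F)$, loop Iwasawa against a minimal $\theta$-split parabolic, unipotent absorption, and the little Weyl group to reach an antidominant representative. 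The paper's route is shorter because the wonderful compactification packages the Iwasawa decomposition and the boundary combinatorics into one geometric statement; yours is more elementary in the sense of not invoking $\overline{G/G_\theta}$, at the cost of several bookkeeping steps you flag as obstacles.

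Beyond the acknowledged gaps, two points need genuine repair. First, the reduction $(G/H)(F)=G(F)/H(F)$ fails when $H$ is disconnected: $H^1(F,\pi_0(H))$ is typically nonzero for $F=k((z))$, so $(G/H)(F)$ can strictly contain the orbit of the basepoint, and ``handled separately without affecting the decomposition'' is precisely the content that is missing (the paper's wonderful-compactification argument sidesteps this because it works with arbitrary $F$-points, not lifts to $G(F)$). Second, your disjointness step invokes a ``$\theta$-twisted Cartan decomposition'' classifying $\theta$-twisted $G(\OO)$-orbits in $M^\theta(F)$ by $W_\theta$-orbits; but under the identification $G/G^\theta\cong M^\theta$ that statement is essentially the proposition you are proving, so as written the argument is circular. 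It is repairable: observe that $g*_\theta z^{2\lambda}=g z^{2\lambda}\theta(g)^{-1}$ with $g,\theta(g)\in G(\OO)$, so $z^{2\lambda}$ and $z^{2\mu}$ lie in the same ordinary $G(\OO)$-double coset, and then the ordinary Cartan decomposition of $G(F)$ plus the fact that antidominant cocharacters of $A_H$ are antidominant for $T$ forces $\lambda=\mu$ --- this is exactly the one-line uniqueness argument the paper gives. Finally, note that $\tau^\theta\colon gH\mapsto g\theta(g)^{-1}$ is not well-defined on $G/H$ when $G^\theta\subsetneq H\subset G_\theta$ (the value changes by an element of $Z_G$), so for general symmetric subgroups $H$ you would need to pass to $G/Z_G$ or restrict first to $H\subset G^\theta$.
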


\begin{rmk}
Note that $z^\lambda$ is a well defined element of $(G/H)(F)$ since there	is a natural inclusion $A_H=T/(T\cap H)\hookrightarrow G/H$.
\end{rmk}

\begin{proof}
	For completion, we recall a proof given by Nadler \cite{nadler_matsuki}.
We begin by reducing to the case $H=G_\theta$. Thus, suppose that there is an element of $z^{\XX_*(A_{G_\theta})_-}$ in the orbit of every element of $(G/G_\theta)(F)$.
Since, by definition of a symmetric subgroup, we have $H\subset G_\theta$, there is a natural projection $G/H\rightarrow G/G_\theta$ and thus $(G/H)(F)\rightarrow (G/G_\theta)(F)$. Moreover, since the inverse image of $A_{G_\theta}$ under the projection $G/H\rightarrow G/G_\theta$ is  $A_H$, the inverse image of $z^{\XX_*(A_{G_\theta})_-}$ is $z^{\XX_*(A_H)_-}$. Since the projection is $G(\OO)$-equivariant, we conclude that there is an element of $z^{\XX_*(A_H)_-}$ in the orbit of every element of $(G/H)(F)$. Uniqueness follows from the fact that anti-dominant weights of $A_H$ are anti-dominant for $T$ and from the Cartan decomposition of $G(F)$.

It remains to show that for every $\phi\in (G/G_\theta)(F)$ there exists some $\lambda\in \XX_*(A_{G_\theta})_-$ such that $z^\lambda$ is in the $G(\OO)$-orbit of $\phi$. We consider now the wonderful compactification $\overline{G/G_\theta}$. Since $\overline{G/G_\theta}$ is projective, by the valuative criterion of properness, every formal loop $\phi\in (G/G_\theta)(F)$ extends to a formal arc  $\bar{\phi}\in \overline{G/G_\theta}(\OO)$. By the local structure theorem, there exist some formal arcs $g\in G(\OO)$ and $\bar{a}\in \Ab(\OO)$ such that
\begin{equation*}
\bar{\phi} = g \bar{a}.
\end{equation*} 
A cocharacter $\lambda: \GG_m \rightarrow A$ extends to a morphism $\Aa^1 \rightarrow \Ab$ if and only if  $\langle \lambda, 2\bar{\alpha}_i \rangle \leq 0$ for every $i=1,\dots,l$, that is, if  $\lambda\in \PP_{\theta,-}^\vee= \XX_*(A_{G_\theta})_-$.
Therefore, $\bar{a}=az^{\lambda}$ for some $\lambda \in \XX_*(A_{G_\theta})_-$.
\end{proof}

\begin{rmk}
Let $H\subset G$ be a symmetric subgroup associated to $\theta$ and $f\in k[G/H]_\chi$ a function with weight $\chi \in \XX^*(A_H)_+$. Recall that for any dominant weight $\chi$,  if $V_\chi$ is the irreducible representation with highest weight $\chi$, we have an isomorphism $V_\chi^*=V_{-w_0\chi}$, for $w_0$ the longest element of the Weyl group, so $f(gt)=t^{-w_0\chi}f(g)$ for any $g\in G$ and  $t\in T$. Therefore, if $\phi=gz^{\lambda}\in G(\OO)z^\lambda$, we have
\begin{equation*}
f(\phi)=(z^\lambda)^{-w_0\chi} f(g) = z^{-\langle \lambda, w_0\chi \rangle} f(g).
\end{equation*} 
Thus, $f(\phi)$ is a Laurent series with highest pole order less or equal than $\langle \lambda,w_0\chi \rangle$.
\end{rmk}

Let us describe now the closures of the orbits $G(\OO)z^\lambda$. Recall that we can define an order in $\XX_*(A_{H})_-$ by putting $\lambda \leq \mu$ if and only if $\lambda-\mu\in -\mathbb{N}\langle \Delta_\theta^\vee\rangle$.

\begin{prop}
For any $\lambda \in \XX_*(A_{H})_-$, the closure of $G(\OO)z^\lambda$ is equal to
\begin{equation*}
\overline{G(\OO)z^\lambda} = \bigsqcup_{\mu \in \XX_*(A_H)_-,\mu\leq \lambda} G(\OO)z^\mu.
\end{equation*} 
\end{prop}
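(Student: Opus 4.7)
The two inclusions are handled separately.

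For $\bigsqcup_{\mu \leq \lambda} G(\OO)z^\mu \subseteq \overline{G(\OO)z^\lambda}$, I would argue by induction on the height $\sum c_i$ of $\lambda - \mu = \sum c_i \bar\alpha_i^\vee \in \mathbb{N}\langle\Delta_\theta^\vee\rangle$, reducing to the case $\mu = \lambda - \bar\alpha^\vee$ for a single simple restricted coroot $\bar\alpha^\vee \in \Delta_\theta^\vee$ with $\mu$ still in $\XX_*(A_H)_-$. For each such $\bar\alpha$, I would use the $\theta$-stable rank-one reductive subgroup $M_{\bar\alpha} \subset G$ generated by $A$ together with the root subgroups $U_{\pm\beta}$ for all roots $\beta$ of $T$ restricting to $\pm\bar\alpha$. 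The classical $\SL_2$-degeneration argument for the closure relation in the affine Grassmannian of $G$, applied inside $M_{\bar\alpha}(F)$ and then transported to $(G/H)(F)$ via the natural immersion $M_{\bar\alpha}/(M_{\bar\alpha}\cap H) \hookrightarrow G/H$, yields an $\Aa^1$-family in $G(\OO)z^\lambda$ for generic parameter whose limit at the boundary lies in $G(\OO)z^{\lambda-\bar\alpha^\vee}$, thereby placing $z^{\lambda-\bar\alpha^\vee}$ in the closure.

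For the reverse inclusion $\overline{G(\OO)z^\lambda} \subseteq \bigsqcup_{\mu \leq \lambda} G(\OO)z^\mu$, I would exploit the $B$-semiinvariant coordinates on $G/H$. For each $\chi \in \XX^*(A_H)_+$ fix a highest-weight function $f_\chi \in k[G/H]_\chi$; by sphericity, these generate the coordinate ring $k[G/H] = \bigoplus_\chi V_\chi$. For $\phi = g z^\nu$ with $g \in G(\OO)$ generic and $\nu \in \XX_*(A_H)_-$, the remark following Proposition \ref{prop:loopdecomp} shows that $f_\chi(\phi)$ is a Laurent series in $z$ with pole of order exactly $\langle \nu, w_0\chi\rangle$. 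Lower semicontinuity of pole orders along a flat $\Aa^1$-family $\phi_t$ with $\phi_t \in G(\OO)z^\lambda$ for $t \neq 0$ and $\phi_0 = z^\mu$ then forces $\langle \mu, w_0\chi\rangle \leq \langle \lambda, w_0\chi\rangle$ for every $\chi \in \XX^*(A_H)_+$. By convex duality this places $\lambda - \mu$ in the cone $\mathbb{R}_{\geq 0}\langle\Delta_\theta^\vee\rangle \cap \XX_*(A_H)$, which coincides with $\mathbb{N}\langle\Delta_\theta^\vee\rangle$ after accounting for the integrality constraints on the lattice $\XX_*(A_H)$ relative to the restricted coroot lattice (equivalently, by comparing with the toric data of the abelianization $\Ab$ of the Guay embedding).

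The main obstacle is the $\SL_2$-degeneration argument in the first inclusion, where one must verify that the Bruhat-type identity inside $M_{\bar\alpha}(F)$ descends correctly to $(G/H)(F)$ so that the degeneration lands precisely in the stratum $G(\OO)z^{\lambda-\bar\alpha^\vee}$ rather than in a smaller one. In the non-quasisplit or non-reduced cases, where $U_{\bar\alpha}$ may have dimension greater than one or where both $\bar\alpha$ and $2\bar\alpha$ occur in $\Phi_\theta$, one must work with a larger rank-one subgroup (of type $\mathrm{SU}(2,1)$ or similar) and perform the analogous Iwasawa--Bruhat computation. The reverse inclusion is essentially formal once the pole-order semicontinuity is in place.
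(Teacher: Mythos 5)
Your plan matches the paper's proof in both directions: the containment of the closure in the union is obtained by the same pole-order semicontinuity argument applied to $B$-semiinvariant functions $f\in k[G/H]_\chi$ (via the remark following Proposition~\ref{prop:loopdecomp}), and the reverse containment is obtained by a rank-one degeneration, reducing by induction on height to a single simple restricted coroot $\bar{\alpha}^\vee$ and exhibiting an explicit one-parameter family in $\SL_2(\OO)z^{-m}$ whose $t\to 0$ limit lands in $\SL_2(\OO)z^{-(m-1)}$, transported to $G$ by the $\SL_2$-homomorphism attached to $\bar{\alpha}$. Your caution about whether this $\SL_2$-degeneration descends correctly to $(G/H)(F)$ in the non-quasisplit or non-reduced cases is a reasonable point that the paper treats rather tersely, but the overall strategy and both key lemmas are identical.
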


\begin{proof}
Let $f\in k[G/H]_\chi$ be a function with weight $\chi\in \XX^*(A_H)_+$. If $\phi\in G(\OO)z^\mu$ is such that $\phi$ belongs to the closure of $G(\OO)z^\lambda$ then the highest pole order of $f(\phi)$ must be less or equal than $\langle \lambda, w_0\chi\rangle$, so	
\begin{equation*}
\langle \mu, w_0 \chi \rangle \leq \langle \lambda, w_0\chi \rangle \implies \langle \lambda - \mu, w_0\chi \rangle \geq 0.
\end{equation*} 
Now, since $w_0\XX^*(A_H)_+=\XX^*(A_H)_-$, we conclude that $\lambda-\mu\in -\mathbb{N}\langle \Delta_\theta^\vee \rangle$.

Reciprocally, we want to show that for every $\mu\leq \lambda$, $G(\OO)z^\mu \in \overline{G(\OO)z^\lambda}$. It suffices to find an element $\phi\in G(\OO)z^\mu$ with $\phi \in \overline{G(\OO)z^\lambda}$. The argument is analogous to the one given for the affine Grassmannian (see \cite{zhu}). We do it for $G=\mathrm{SL}_2$ and the general result follows by considering the canonical homomorphism $\mathrm{SL}_2\rightarrow G$ associated to the root $\bar{\alpha}$ with $\lambda-\mu = \bar{\alpha}^\vee$. Given any $m\in \mathbb{N}$ we can consider the family
 \begin{equation*}
\begin{pmatrix}
	z^m & 0 \\
	z^{-m}+t^{-1}z^{-m+1} & z^{-m}
\end{pmatrix}
\in \SL_2(\OO) z^{-m} ,
\end{equation*} 
for $t\in k^\times$.
One can easily check that
\begin{equation*}
\begin{pmatrix}
	t^{-1} & 0 \\	
	0 & t
\end{pmatrix}
\begin{pmatrix}
	1 & -t z^{2m-1}\\
	0 & 1
\end{pmatrix}
\begin{pmatrix}
	z^m & 0 \\
	z^{-m}+t^{-1}z^{-m+1} & z^{-m}
\end{pmatrix}
=
\begin{pmatrix}
	- z^{m-1} & -z^{m-1} \\
	tz^{-m} + z^{-m + 1} & tz^{-m}
\end{pmatrix},
\end{equation*} 
which lies in the same orbit $\SL_2(\OO)z^{-m}$ as the original matrix. Now, the limit as $t\rightarrow 0$ clearly has to lie in the closure $\overline{\SL_2(\OO)z^{-m}}$, but this limit is
\begin{equation*}
\begin{pmatrix}
	-z^{m-1} & -z^{m-1} \\	
	z^{-m+1} & 0
\end{pmatrix}
=
\begin{pmatrix}
	-z^{2m} & -z^{2m} \\
	1 & 0
\end{pmatrix}
z^{-m+1}
\in \SL_2(\OO)z^{-(m-1)}
\end{equation*} 
We conclude that $\SL_2(\OO)z^{-(m-1)}\subset \overline{\SL_2(\OO)z^{-m}}$
\end{proof}

Now, let $G$ be semisimple simply-connected and $\theta\in \Aut_2(G)$ an involution. We finish the section by describing the loop space of the enveloping embedding $\Env(G/G^\theta)$; this is a generalization of \cite{chi}*{2.5}. We begin by noticing, as in the proof of Proposition \ref{prop:loopdecomp} that if $\Ab$ is an $A$-toric variety with weight semigroup $\PP_+(\Ab)\subset A$, then an element $a\in A(F)$ extends to an element $\bar{a}\in \Ab(\OO)$ if and only if $a\in A(\OO)z^\lambda$, for $\lambda$ in the dual semigroup
\begin{equation*}
\PP_+(\Ab)^\vee = \left\{\lambda \in \XX^*(A): \langle \lambda,\chi\rangle \in \mathbb{N}, \forall \chi \in \PP_+(\Ab)\right\}.
\end{equation*} 
In other words,
\begin{equation*}
A(F) \cap \Ab(\OO) = \bigsqcup_{\lambda \in \PP_+(\Ab)^\vee} A(\OO)z^\lambda.
\end{equation*} 
In particular, we get
\begin{equation*}
A_{G_\theta}(F)\cap \Ab_{\Env(G/G^\theta)}(\OO)=\bigsqcup_{\lambda \in \XX^*(A_{G_\theta})_-} A_{G_\theta}(\OO)z^\lambda,
\end{equation*} 
since $\XX^*(A_{G_\theta})_-=(-\mathbb{N}\langle \Delta_\theta \rangle)^\vee$.

Consider an anti-dominant cocharacter $\lambda \in \XX_*(A_{G_\theta})_-$ and define $\Env^\lambda(G/G^\theta)$ as the fibered product
\begin{center}
\begin{tikzcd}
\Env^\lambda(G/G^\theta) \ar{r} \ar{d} &A_{G_\theta} \times \Env(G/G^\theta) \ar{d} \\
\Spec \OO \ar{r}{z^{-w_0\lambda}} & \Ab_{\Env(G/G^\theta)}, 
\end{tikzcd}
\end{center}
where the vertical map on the right is the multiplication of the abelianization map $\alpha_{\Env(G/G^\theta)}$ with the natural embedding $A_{G_\theta}\hookrightarrow \Ab_{\Env(G/G^\theta)}$. Replacing $\Env(G/G^\theta)$ by the open subvariety $\Env^0(G/G^\theta)$ we define an open subvariety $\Env^{\lambda,0}(G/G^\theta)$. The above stratification induces
\begin{equation*}
\Env(G/G^\theta) (\OO) \cap (G/G^\theta)_+(F) = \bigsqcup_{\lambda \in \XX^*(A_{G_\theta})_-} \Env^\lambda(G/G^\theta) (\OO). 
\end{equation*} 
We also note that
\begin{equation*}
\Env^{\lambda,0}(G/G^\theta)(\OO) = \Env^{\lambda}(G/G^\theta)(\OO) \cap \Env^{0}(G/G^\theta)(\OO)
\end{equation*} 

\begin{prop}\label{prop:extender_loops}
For any $\phi \in (G/G^\theta)_+(F)$, we have $\phi \in \Env^\lambda(G/G^\theta)(\OO)$ if and only if the image of $\phi$ in $(G/G_\theta)(F)$ belongs to $\overline{G(\OO)z^\lambda}$. Moreover, $\phi\in \Env^{\lambda,0}(G/G^\theta)(\OO)$ if and only if the image of $\phi$ in $(G/G_\theta)(F)$ belongs to $G(\OO)z^\lambda$.
\end{prop}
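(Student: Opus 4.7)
I will prove the proposition by comparing the fiber product defining $\Env^\lambda(G/G^\theta)$ with the loop decomposition of $(G/G_\theta)(F)$ from Proposition~\ref{prop:loopdecomp}, using the wonderful compactification $\overline{G/G_\theta}$ as an intermediate geometric bridge.

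The first step is to translate the fiber product definition into a condition on the abelianization map. By the definition of $\Env^\lambda(G/G^\theta)$ combined with the toric decomposition $A_{G_\theta}(F)\cap \Ab_{\Env(G/G^\theta)}(\OO)=\bigsqcup_\mu A_{G_\theta}(\OO)z^\mu$ just established before the statement, the condition $\phi\in \Env^\lambda(G/G^\theta)(\OO)$ is equivalent to the conjunction of $\phi\in \Env(G/G^\theta)(\OO)$ with the requirement
$$\alpha_{\Env(G/G^\theta)}(\phi)\in A_{G_\theta}(\OO)\cdot z^{-w_0\lambda}\ \subset\ \Ab_{\Env(G/G^\theta)}(\OO).$$

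Next, I relate $\alpha_{\Env(G/G^\theta)}(\phi)$ to the image of $\phi$ in $(G/G_\theta)(F)$ via the wonderful compactification. Because $\alpha_{\Env(G/G^\theta)}$ is $G$-invariant and, by the Brion--Cox identification recalled above, $\Env^0(G/G^\theta)\git A_{G^\theta}\cong \overline{G/G_\theta}$, the abelianization factors (on $\Env^0$, and on all of $\Env(G/G^\theta)$ by taking closures) as
$$\Env(G/G^\theta)\xrightarrow{\;q\;} \overline{G/G_\theta}\xrightarrow{\;\pi\;}\Ab\hookrightarrow \Ab_{\Env(G/G^\theta)},$$
where $\pi$ is the local-structure map realizing $\Ab=\overline{G/G_\theta}\git G$. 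The proof of Proposition~\ref{prop:loopdecomp} shows that, for $\bar\phi\in (G/G_\theta)(F)$, the unique $\mu\in \XX_*(A_{G_\theta})_-$ with $\bar\phi\in G(\OO)z^\mu$ is precisely the one satisfying $\pi(\overline{\bar\phi})\in A(\OO)z^\mu\subset \Ab(\OO)$ for the canonical extension $\overline{\bar\phi}\in \overline{G/G_\theta}(\OO)$ obtained by properness. Combined with the first step, this yields $\phi\in \Env^\lambda(G/G^\theta)(\OO)$ if and only if $\bar\phi\in \overline{G(\OO)z^\lambda}$, where the closure arises because the toric stratum $A_{G_\theta}(\OO)z^{-w_0\lambda}$ in $\Ab_{\Env(G/G^\theta)}$ pulls back under $q$ to the union of $G(\OO)$-orbits indexed by $\mu\leq \lambda$, matching the closure description $\overline{G(\OO)z^\lambda}=\bigsqcup_{\mu\leq\lambda}G(\OO)z^\mu$ from Proposition~\ref{prop:loopdecomp}.

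For the ``moreover'' statement, note that $\Env^{\lambda,0}(G/G^\theta)(\OO)=\Env^\lambda(G/G^\theta)(\OO)\cap \Env^0(G/G^\theta)(\OO)$ corresponds under $q$ to the open $G$-orbit inside $\overline{G(\OO)z^\lambda}$: indeed, $\Env^0$ is exactly the open subvariety on which $q$ realizes the geometric quotient over $\overline{G/G_\theta}$ (the removed locus corresponding to deeper $G$-orbits), so intersecting with $\Env^\lambda$ picks out only the open stratum $G(\OO)z^\lambda$. The main technical difficulty will be the $-w_0$ bookkeeping needed to convert between the cocharacter pairing indexing the loop strata in $(G/G_\theta)(F)$ and the character weights defining $\Ab_{\Env(G/G^\theta)}$, together with the precise identification of $\Ab_{\Env(G/G^\theta)}$ as a finite toric cover of $\Ab$ reflecting the inclusion $F^\theta\subset F_\theta$; once these normalizations are pinned down the argument reduces, as in the monoid analogue \cite{chi}*{2.5}, to the loop decomposition of the symmetric variety established in Proposition~\ref{prop:loopdecomp}.
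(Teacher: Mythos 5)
The paper's proof proceeds by reducing to the case $\phi = (z^\mu, z^\eta)$ with $\mu,\eta \in \XX_*(A_{G_\theta})_-$ (justified by the $G_A(\OO)$-invariance of both conditions together with the loop decomposition of the symmetric variety $(G/G^\theta)_+$), and then computing the valuation of $f_+(\phi)$ explicitly from the formula $f_+(g,z) = z^{\psi(\chi)}f(g)$ of Remark~\ref{coordenv}. Your proposal takes a genuinely different, more geometric route through the wonderful compactification, but it rests on a step that does not hold.

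The factorization
\begin{equation*}
\alpha_{\Env(G/G^\theta)}\;=\;\iota\circ\pi\circ q,\qquad \Env(G/G^\theta)\xrightarrow{q}\overline{G/G_\theta}\xrightarrow{\pi}\Ab\hookrightarrow \Ab_{\Env(G/G^\theta)},
\end{equation*}
is not available as a chain of morphisms. First, $\Ab$ is \emph{not} $\overline{G/G_\theta}\git G$: the local structure theorem of De Concini--Procesi provides an isomorphism $\Omega \cong P^u\times \Ab$ of an \emph{open cell} $\Omega\subset\overline{G/G_\theta}$, from which one gets a retraction $\Omega\to\Ab$, but no globally defined projection from the projective variety $\overline{G/G_\theta}$ to $\Ab$. (In fact $\overline{G/G_\theta}$ has a dense $G$-orbit, so any $G$-invariant morphism to an affine target is constant on an open dense set.) Second, $q$ and $\alpha_{\Env(G/G^\theta)}$ are GIT quotients by \emph{different} groups ($A_{G^\theta}$ versus $G$); composing $q$ with a further $G$-quotient would quotient by both and collapse $\Ab_{\Env(G/G^\theta)}$, not recover it. So the diagram you use to transfer the condition $\alpha_{\Env(G/G^\theta)}(\phi)\in A_{G_\theta}(\OO)z^{-w_0\lambda}$ to the statement $\bar\phi\in \overline{G(\OO)z^\lambda}$ does not exist, and the remaining normalization ``bookkeeping'' that you flag cannot be decoupled from this structural issue — pinning down the precise relation between the antidominant cocharacter $\mu$ with $\bar\phi\in G(\OO)z^\mu$ and the cocharacter indexing $\alpha_{\Env(G/G^\theta)}(\phi)$ is exactly the content of the proposition, so one cannot quote it. The first translation you make (that $\phi\in\Env^\lambda(G/G^\theta)(\OO)$ iff $\phi\in \Env(G/G^\theta)(\OO)$ and $\alpha_{\Env(G/G^\theta)}(\phi)\in A_{G_\theta}(\OO)z^{-w_0\lambda}$) is fine, and the observation that $\Env^{\lambda,0}$ is cut out by non-vanishing is also fine; what is missing is the direct comparison that the paper carries out on the torus via the extended semi-invariants $f_+$, using the valuation computation $f_+(z^\mu,z^\eta)=z^{\langle w_0\chi,\eta\rangle}z^{\langle\chi,\mu\rangle}$ and the fact that $\langle\chi,\mu-\lambda\rangle\geq 0$ for all $\chi\in\XX^*(A_{G^\theta})_+$ forces $\mu\leq\lambda$.
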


\begin{proof}
It suffices to show it for $\phi=(z^\mu,z^\eta)$ with $\mu,\eta\in \XX_*(A_{G_\theta})_-$. First, note that by construction we must have $z^{\eta}=z^{-w_0\lambda},$ so we get $\eta=-w_0\lambda$. On the other hand, $\phi \in \Env(G/G^\theta)(\OO)$ if and only if $f_+ (\phi)\in \OO$ for any $f\in k[G/G^\theta]$. Thus, for any $\chi \in \XX^*(A_{G^\theta})_+$ and for any $f\in k[G/G^\theta]_\chi$ we have
\begin{equation*}
\OO \ni	f_+(z^\mu,z^\eta) = z^{\langle w_0\chi,\eta \rangle} z^{\langle \chi, \mu \rangle}.
\end{equation*} 
Therefore, 
\begin{equation*}
0 \leq \langle w_0 \chi,\eta \rangle + \langle \chi, \mu \rangle = \langle w_0 \chi, -w_0 \lambda \rangle + \langle \chi,\mu \rangle = \langle \chi, \mu-\lambda \rangle.
\end{equation*} 
We have this for any $\chi \in \XX^*(A_{G^\theta})_+$, so $\mu-\lambda \in \mathbb{N}(\Delta_\theta^\vee)$ and $\lambda\geq \mu$. Finally, $\phi\in \Env^0(G/G^\theta)(\OO)$ if and only if $f_+(\phi)\in \OO$ does not vanish, thus, if and only if $0\geq \langle \chi, \mu-\lambda \rangle$, so $\lambda=\mu$.
\end{proof}

\section{Multiplicative Higgs bundles for symmetric varieties} \label{hitchin}
\subsection{The multiplicative Hitchin map for symmetric embeddings} \label{higgssymmetric}
We begin by taking a semisimple simply-connected group $G$ over $k$ and an involution $\theta\in \Aut_2(G)$. Let $\Sigma$ be a very flat symmetric embedding with $O_\Sigma'=G/G^\theta$.

The left multiplication action of $G^\theta$ on $G/G^\theta$ extends to an action of $G^\theta$ on $\Sigma$. Moreover, if $O_\Sigma$ is a symmetric variety the form  $G_Z/H_Z$ for some torus $Z$, then the torus $Z_\Sigma=Z/Z_2$ also acts on $\Sigma$ through the action of $Z$.

We can now consider the quotient stacks $[\Sigma/G^\theta]$ and $[\Sigma/(G^\theta\times Z_\Sigma)]$. The quotient map $\chi_\Sigma:\Sigma \rightarrow \Cc_\Sigma:=\Sigma\git G^\theta$ induces a natural map $\chi_\Sigma:[\Sigma/G^\theta]\rightarrow \Cc_\Sigma$, that we also denote by $\chi_\Sigma$. We can also descend this to a map $[\Sigma/(G^\theta \times Z_\Sigma)]\rightarrow [\Cc_\Sigma/Z_\Sigma]$.

As we explained in the previous section, the abelianization map $\alpha_\Sigma:\Sigma \rightarrow \Ab_\Sigma:=\Sigma\git G$ factors through  $\Sigma \overset{\chi_\Sigma}{\rightarrow} \Cc_\Sigma \rightarrow \Ab_\Sigma$. The torus $Z_\Sigma$ clearly acts on the abelianization  $\Ab_\Sigma$, so composing with the natural map $\Ab_\Sigma \rightarrow \Spec(k)$, this sequence induces a sequence of quotient stacks
\begin{center}
\begin{tikzcd}
	\text{$[\Sigma/(G^\theta \times Z_\Sigma)]$} \ar{r}{\chi_\Sigma} & \text{$[\Cc_\Sigma/Z_\Sigma]$} \rar & \text{$[\Ab_\Sigma/Z_\Sigma]$} \rar & \B Z_\Sigma.
\end{tikzcd}
\end{center}
Here, $\B Z_\Sigma$ denotes the quotient stack $[\Spec(k)/Z_\Sigma]$, which is the classifying stack of $Z_\Sigma$-bundles.

Let $X$ be a smooth algebraic curve over $k$. Let $\MM_X(\Sigma)$, $\BB_X(\Sigma)$ and $\aA_X(\Sigma)$ denote the stacks of maps from $X$ to $[\Sigma/(G^\theta\times Z_\Sigma)]$, $[\Cc_\Sigma/Z_\Sigma]$ and $[\Ab_\Sigma/Z_\Sigma]$, respectively. We obtain a sequence
\begin{center}
\begin{tikzcd}
	\MM_X(\Sigma) \ar{r}{h_X} & \BB_X(\Sigma) \rar & \aA_X(\Sigma) \rar & \mathrm{Bun}_{Z_\Sigma}(X).
\end{tikzcd}
\end{center}
If we let $L\rightarrow X$ be a $Z_\Sigma$-bundle, the natural map $X\rightarrow \B Z_\Sigma$ associated to $L$ induces the following diagram, where all squares are Cartesian
\begin{center}
\begin{tikzcd}
	L(\text{$[\Sigma/G^\theta]$}) \ar{r}{\chi_L} \ar{d} & L(\Cc_\Sigma) \ar{d} \ar{r} & L(\Ab_\Sigma) \ar{r} \ar{d} & X \ar{d}{L} \\
	\text{$[\Sigma/(G^\theta \times Z_\Sigma)]$} \ar{r}{\chi} & \text{$[\Cc_\Sigma/Z_\Sigma]$} \ar{r} & \text{$[\Ab_{\Sigma}/Z_\Sigma]$} \ar{r} & \B Z_\Sigma.
\end{tikzcd}
\end{center}
These $L([\Sigma/G^\theta])$, $L(\Cc_\Sigma)$ and $L(\Ab_\Sigma)$ are naturally stacks over $X$ and we denote by $\MM_L(\Sigma)$, $\BB_L(\Sigma)$ and $\aA_L(\Sigma)$ their stacks of sections. We obtain a sequence
\begin{center}
\begin{tikzcd}
	\MM_L(\Sigma) \ar{r}{h_L} & \BB_L(\Sigma) \rar & \aA_L(\Sigma).
\end{tikzcd}
\end{center}
This sequence is the fibre over $L\in \mathrm{Bun}_{Z_\Sigma}(X)$ of the sequence defined above.
Fixing a section $s$ in $\aA_L(\Sigma)$, over it we obtain a morphism of stacks
\begin{equation*}
\MM_{s}(\Sigma) \overset{h_{s}}{\longrightarrow} \BB_{s}(\Sigma).
\end{equation*} 

\begin{defn}
The map $h_{s}:\MM_{s}(\Sigma)\rightarrow \BB_{s}(\Sigma)$ is the \emph{multiplicative Hitchin map} over $X$ associated to $\Sigma$ and $s$.
\end{defn}

We can give a more explicit description of all the objects taking part in this definition. The bundle $L(\Ab_\Sigma)$ is just the associated bundle over $X$ defined by the action of $Z_\Sigma$ on the $A_\Sigma$-toric variety $\Ab_\Sigma$. Since $\Ab_\Sigma$ is an affine space, $L(\Ab_\Sigma)$ is a vector bundle of rank equal to the rank of $A_\Sigma$. The bundle $L(\Cc_\Sigma)$ is also an associated bundle, this time to the action of $Z_\Sigma$ on  $\Cc_\Sigma$ and, since  $\Cc_\Sigma$ is also an affine space,  $L(\Cc_\Sigma)$ is also a vector bundle, of rank equal to the rank of $A_\Sigma$ plus the rank of $G/G^\theta$. The stacks $\aA_L(\Sigma)=H^0(X,L(\Ab_\Sigma))$ and $\BB_L(\Sigma)=H^0(X,L(\Cc_\Sigma))$ are just the spaces of sections of these vector bundles.

More precisely, $\Ab_\Sigma$ is the $A_\Sigma$-toric variety with weight semigroup $\PP_+(A_\Sigma)\subset \XX^*(A_\Sigma)\hookrightarrow \XX^*(Z_\Sigma)$. If we take $\gamma_1,\dots,\gamma_s$ to be generators of $\PP_+(A_\Sigma)$, then $L(\Ab_\Sigma)=\bigoplus_{i=1}^s L_{\gamma_i}$, for $L_{\gamma_i}$ the associated line bundle to the action of $Z_\Sigma$ on $\GG_m$ defined by $\gamma_i$. On the other hand, $k[\Cc_\Sigma]$ is generated by the same $e^{\gamma_i}$ and by some functions $b_i\in k[G/G^\theta]$ with weight $\varpi_i$, so, $L(\Cc_\Sigma)=L(\Ab_\Sigma)\oplus \bigoplus_{i=1}^l (\psi^*L)_{\varpi_i}$. Here, $\psi^*L$ is the $A_{G^\theta}$-bundle on $X$ obtained as the image of $L$ through the map $\B Z_{\Sigma}\rightarrow \B A_{G^\theta}$ induced by $\psi:\XX^*(A_{G^\theta})\rightarrow \XX^*(Z_\Sigma)$.

Now, a morphism $X\rightarrow [\Sigma/(G^\theta\times Z_\Sigma)]$ consists of a pair $(E,\varphi)$, where $E\rightarrow X$ is a $G^\theta$-bundle and $\varphi \in H^0(X,E(\Sigma))$ is a section of the associated bundle $E(\Sigma)$ defined by the action of $G^\theta$ on $\Sigma$. Such a pair is called a \emph{mutiplicative $\Sigma$-Higgs pair}.

Now the sequence $\MM_L(\Sigma)\rightarrow \BB_L(\Sigma) \rightarrow \aA_L(\Sigma)$ defining the multiplicative Hitchin map can be explicitly described as
\begin{equation*}
	(E,\varphi)\longmapsto (b(\varphi),\alpha_\Sigma(\varphi)) \longmapsto \alpha_\Sigma(\varphi),
\end{equation*} 
for $b(\varphi)=(b_1(\varphi),\dots,b_l(\varphi))$.

\subsection{Multiplicative \texorpdfstring{$(G,\theta)$}{(G,theta)}-Higgs bundles} \label{gthetahiggs}
Let $G$ be any reductive group and $\theta\in \Aut_2(G)$ an involution. As in the previous section, we let $X$ be a smooth algebraic curve over $k$.

For any positive integer $d$ we denote by $X_d=X^d/\mathfrak{S}_n$ the $d$-th symmetric product, so that elements $D\in X_d$ are effective divisors of degree $d$ on $X$. More generally, given a tuple $\bm{d}=(d_1,\dots,d_n)$ of positive integers, we denote $X_{\bm{d}}=X_{d_1}\times \dots \times X_{d_n}$ and to any element $\bm{D}=(D_1,\dots,D_n)\in X_{\bm{d}}$ we can associate the divisor $D=D_1+\dots+D_n$.

\begin{defn} \label{def:gthetahiggs}
Let $\bm{D}\in X_{\bm{d}}$. A \emph{multiplicative $(G,\theta)$-Higgs bundle with singularities in $\bm{D}$} is a pair $(E,\varphi)$, where $E\rightarrow X$ is a $G^\theta$-bundle and $\varphi$ is a section of the associated bundle of symmetric varieties $E(G/G^\theta)$ defined over the complement $X\setminus \lvert D \rvert$ of the support $\lvert D \rvert$ of the divisor $D$. 

We denote by $\MM_{\bm{d}}(G,\theta)$ the moduli stack of tuples $(\bm{D},E,\varphi)$ with $\bm{D}\in X_{\bm{d}}$ and $(E,\varphi)$ a multiplicative $(G,\theta)$-Higgs bundle with singularities in $\bm{D}$. 
\end{defn}

To any tuple $(\bm{D},E,\varphi)$ in $\MM_{\bm{d}}(G,\theta)$ and any point $x\in \lvert D \rvert$ we can associate an invariant $\inv_x(\varphi)\in \XX_*(A_{G_\theta})_-$, for $A\subset G$ a maximal $\theta$-split torus. This is constructed by considering the completion $\OO_x$ of the local ring $\Oo_{X,x}$ of $X$ at $x$, which by picking a local parameter is isomorphic to the ring $\OO$ of formal power series, and its quotient field $F_x=\mathrm{qf}(\OO_x)\cong F$. By fixing a trivialization of $E$ around the formal disk $\Spec(\OO_x)$ and restricting $\varphi$ to $\Spec(F_x)$ we get an element of $(G/G^\theta)(F_x)$ which is well defined up to the choice of the trivialization, thus, up to the action of $G(\OO_x)$. That is, we obtain a well defined element 
\begin{equation*}
\inv_x(\varphi) \in (G/G^\theta)(F_x)/G(\OO_x) \cong \XX_*(A_{G^\theta})_-.
\end{equation*} 
Globally, if $D=\sum_{x\in X}n_x x$, we get a $\XX_*(A_{G^\theta})_-$-valued divisor
\begin{equation*}
\inv(\varphi)=\sum_{x\in X} n_x\inv_x(\varphi) x.
\end{equation*} 

Now, we can impose a constraint on the invariant $\inv(\varphi)$ in order to obtain a finite-type moduli stack.
We define
\begin{equation*}
\MM_{\bm{d},\bm{\lambda}}(G,\theta) =  \left\{(\bm{D},E,\varphi)\in \MM_{\bm{d}}(G,\theta): \inv(\varphi)= \bm{\lambda}\cdot \bm{D}\right\},
\end{equation*} 
where $\bm{\lambda}=(\lambda_1,\dots,\lambda_n)$ is a tuple of anti-dominant cocharacters $\lambda_i\in \XX_*(A_{G^\theta})_-$ and $\bm{\lambda}\cdot \bm{D}=\sum_{i=1}^n \lambda_i D_i$. For any element $(\bm{D},E,\varphi)$ in $\MM_{\bm{d},\bm{\lambda}}(G,\theta)$, we say that the pair $(E,\varphi)$ is a multiplicative $(G,\theta)$-Higgs bundle with \emph{singularity type} $(\bm{D},\bm{\lambda})$.
The order on $\XX_*(A_{G^\theta})_-$ induces an order on $\XX_*(A_{G^\theta})_-$-valued divisors and thus we can also define the bigger stack
\begin{equation*}
\MM_{\bm{d},\bar{\bm{\lambda}}}(G,\theta) =  \left\{(\bm{D},E,\varphi)\in \MM_{\bm{d}}(G,\theta): \inv(\varphi)\leq \bm{\lambda}\cdot \bm{D}\right\}.
\end{equation*} 

Suppose now that we are in the situation in which $k[G/G^\theta]^{G^\theta}$ is a polynomial algebra and thus it is generated by some functions $b_1,\dots,b_l$ with weights $\varpi_1,\dots,\varpi_l$ respectively, for $\varpi_1,\dots,\varpi_l$ the fundamental dominant weights of the restricted root system $\Phi_\theta$. Consider now the bundle $\BB_{\bm{d},\bm{\lambda}}(G,\theta)\rightarrow X_{\bm{d}}$ whose fibre over $\bm{D}$ is the space of sections
\begin{equation*}
\BB_{\bm{d},\bm{\lambda}}(G,\theta)_{\bm{D}}= \bigoplus_{i=1}^l H^0(X,\Oo_X(\langle \varpi_i,w_0\bm{\lambda}\cdot \bm{D}\rangle)).
\end{equation*} 

\begin{defn}
The \emph{multiplicative Hitchin map} for $(G,\theta)$-Higgs bundles is the morphism
\begin{align*}
h_{\bm{d},\bm{\lambda}}: \MM_{\bm{d},\bm{\lambda}}(G,\theta) & \longrightarrow \BB_{\bm{d},\bm{\lambda}}(G,\theta) \\
(\bm{D},E,\varphi) & \longmapsto (b_1(\varphi),\dots,b_l(\varphi)). 
\end{align*} 
\end{defn}

We can also consider the simpler case where the $d_i$ are equal to $1$ and thus we have that $\bm{D}=\bm{x}=(x_1,\dots,x_n)$. We define
\begin{equation*}
\MM_{\bm{x},\bm{\lambda}}(G,\theta) := \MM_{\vec{1},\bm{\lambda}}(G,\theta)_{\bm{x}} = \left\{(E,\varphi) \in \MM_{\vec{1}}(G,\theta): \inv(\varphi)=\bm{\lambda}\cdot \bm{x}\right\} ,
\end{equation*} 
where $\vec{1}=(1,\dots,1)$, and the bigger stack
\begin{equation*}
\MM_{\bm{x},\bar{\bm{\lambda}}}(G,\theta) := \MM_{\vec{1},\bar{\bm{\lambda}}}(G,\theta)_{\bm{x}} = \left\{(E,\varphi) \in \MM_{\vec{1}}(G,\theta): \inv(\varphi)\leq \bm{\lambda}\cdot \bm{x}\right\} .
\end{equation*} 
When $k[G/G^\theta]$ is polynomial, we can also consider the Hitchin base
\begin{equation*}
\BB_{\bm{x},\bm{\lambda}}(G,\theta) = \BB_{\vec{1},\bm{\lambda}}(G,\theta)_{\bm{x}}=\bigoplus_{i=1}^l H^0(X,\Oo_X(\langle \varpi_i,w_0\bm{\lambda}\cdot \bm{x}\rangle)),
\end{equation*} 
and the Hitchin map
\begin{align*}
h_{\bm{x},\bm{\lambda}}: \MM_{\bm{x},\bm{\lambda}}(G,\theta) & \longrightarrow \BB_{\bm{x},\bm{\lambda}}(G,\theta) \\
(E,\varphi) & \longmapsto (b_1(\varphi),\dots,b_l(\varphi)). 
\end{align*}

\subsection{Relating the two pictures}
In this section we see how, for $G$ semisimple, the multiplicative Hitchin map for $(G,\theta)$-Higgs bundles as defined in Section \ref{gthetahiggs} can be recovered from the multiplicative Hitchin map as defined in Section \ref{higgssymmetric}.

\subsubsection*{The simply-connected case}
Let us suppose that $G$ is simply-connected. Let $\bm{\lambda}=(\lambda_1,\dots,\lambda_n)$ be a tuple of anti-dominant cocharacters $\lambda_i\in\XX_*(A_{G^\theta})_-$.  
 The natural projection $A_{G^\theta}\to A_{G_\theta}$ induces a projection $\XX_*(A_{G^\theta})_- \to \XX_*(A_{G_\theta})_-$ and thus $\bm{\lambda}$ defines a multiplicative map
\begin{align*}
\bm{\lambda}: \GG_m^n & \longrightarrow A_{G_\theta} \\
(z_1,\dots,z_n) & \longmapsto z_1^{\lambda_1}\dots z_n^{\lambda_n}
\end{align*} 
which extends to a map $\bm{\lambda}:\Aa^n \rightarrow \Ab_{\Env(G/G^\theta)}$, since $\PP_+(\Env(G/G^\theta))^\vee=\XX_*(A_{G_\theta})_-$. We can now define a very flat symmetric embedding $\Sigma^{\bm{\lambda}}$ with semisimple part $O_{\Sigma^{\bm{\lambda}}}'=G/G^\theta$ through the Cartesian diagram
\begin{center}
\begin{tikzcd}
\Sigma^{\bm{\lambda}} \ar{r} \ar{d} & \Env(G/G^\theta) \ar{d}{\alpha_{\Env(G/G^\theta)}} \\
\Aa^n \ar{r}{-w_0 \bm{\lambda}} & \Ab_{\Env(G/G^\theta)}. 
\end{tikzcd}
\end{center}
Since for any symmetric embedding the tori $A_\Sigma$ and $Z_\Sigma$ differ by a finite subgroup, by construction we get $Z_{\Sigma^{\bm{\lambda}}}=\GG_m^n$ and $\Ab_{\Sigma^{\bm{\lambda}}}=\Aa^n$, so $\mathrm{Bun}_{ Z_{\Sigma^{\bm{\lambda}}}}(X)=\mathrm{Pic}(X)^n$ and, for any tuple of line bundles $\bm{L}=(L_1,\dots,L_n)$, the fibre of $\aA_X(\Sigma^{\bm{\lambda}})\rightarrow \mathrm{Bun}_{Z_{\Sigma^{\bm{\lambda}}}}(X)$ over $\bm{L}$ is $\aA_{\bm{L}}(\Sigma^{\bm{\lambda}})=\bigoplus_{i=1}^n H^0(X,L_i)$. Replacing $\Env(G/G^\theta)$ by $\Env^0(G/G^\theta)$ we obtain an open dense subvariety $\Sigma^{\bm{\lambda},0}\subset \Sigma^{\bm{\lambda}}$, and we can also consider the open substack $\MM_X(\Sigma^{\bm{\lambda},0})$ of $\MM_X(\Sigma^{\bm{\lambda}})$ of maps from $X$ to $[\Sigma^{\bm{\lambda},0}/(G^\theta\times Z_\Sigma)]$.

Consider now $\bm{d}=(d_1,\dots,d_n)$ a tuple of positive integers and an element $\bm{D}=(D_1,\dots,D_n)\in X_{\bm{d}}$. To each $D_i$ we can associate the line bundle $\Oo_X(D_i)$ which, since $D_i$ is effective (all its coefficients are positive), has a canonical non-vanishing section $s_i \in H^0(X,L_i)$. Let us denote $\Oo_X(\bm{D})=\bigoplus_{i=1}^n \Oo_X(D_i)$ and $\bm{s}=(s_1,\dots,s_n)$.

\begin{thm}
The map
\begin{align*}
X_{\bm{d}} & \longrightarrow \aA_X(\Sigma^{\bm{\lambda}}) \\
\bm{D} & \longmapsto (\Oo_X(\bm{D}),\bm{s})
\end{align*} 
induces the following diagram, where all squares are Cartesian,
\begin{center}
\begin{tikzcd}
	\MM_{\bm{d},\bar{\bm{\lambda}}}(G,\theta) \dar \rar[hook] &	\MM_{\bm{d},\bar{\bm{\lambda}}}(G,\theta) \ar{r}{h_{\bm{d},\bm{\lambda}}} \dar & \BB_{\bm{d},\bm{\lambda}}(G,\theta) \rar\dar & X_{\bm{d}}\dar \\
							   \MM_X(\Sigma^{\bm{\lambda},0}) \rar[hook] &	\MM_X(\Sigma^{\bm{\lambda}}) \ar{r}{h_X} & \BB_X(\Sigma^{\bm{\lambda}})  \rar & \aA_X(\Sigma^{\bm{\lambda}}).
\end{tikzcd}
\end{center}
\end{thm}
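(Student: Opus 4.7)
The plan is to verify that each of the three squares in the diagram is Cartesian, working from right to left, by base-changing to a point $\bm{D} \in X_{\bm{d}}$ and identifying the fibres of the horizontal maps over the induced section $(\Oo_X(\bm{D}),\bm{s}) \in \aA_X(\Sigma^{\bm{\lambda}})$.

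For the rightmost square, the very flatness of $\Sigma^{\bm{\lambda}}$ together with Proposition \ref{invariant} gives the product decomposition $\Cc_{\Sigma^{\bm{\lambda}}} \cong \Cc_{G/G^\theta} \times \Ab_{\Sigma^{\bm{\lambda}}}$, and since $G$ is simply-connected, Richardson's theorem identifies $\Cc_{G/G^\theta} \cong \Aa^l$ with coordinates $b_1,\dots,b_l$ of weights $\varpi_1,\dots,\varpi_l$ under $Z_{\Env(G/G^\theta)} = A_{G^\theta}$. For any $Z_{\Sigma^{\bm{\lambda}}}$-bundle $\bm{L}$ this yields
\[
\bm{L}(\Cc_{\Sigma^{\bm{\lambda}}}) = \bm{L}(\Ab_{\Sigma^{\bm{\lambda}}}) \oplus \bigoplus_{i=1}^{l} (\psi_{\Sigma^{\bm{\lambda}}}^{\,*} \bm{L})_{\varpi_i}.
\]
A direct calculation from Guay's formula $\psi_{\Env}(\chi) = w_0\chi$ and the defining Cartesian square of $\Sigma^{\bm{\lambda}}$ identifies $\psi_{\Sigma^{\bm{\lambda}}} \colon \XX^*(A_{G^\theta}) \to \XX^*(\GG_m^n) = \ZZ^n$ with $\chi \mapsto (\langle \chi, w_0\lambda_i\rangle)_{i=1}^n$. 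Setting $\bm{L} = \Oo_X(\bm{D})$ gives $(\psi_{\Sigma^{\bm{\lambda}}}^{\,*} \Oo_X(\bm{D}))_{\varpi_i} \cong \Oo_X(\langle \varpi_i, w_0 \bm{\lambda}\cdot \bm{D}\rangle)$, matching summand by summand the definition of $\BB_{\bm{d},\bm{\lambda}}(G,\theta)_{\bm{D}}$.

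For the middle and leftmost squares the key input is Proposition \ref{prop:extender_loops}. Given $(E,\varphi)\in \MM_X(\Sigma^{\bm{\lambda}})$ with abelianization $(\Oo_X(\bm{D}), \bm{s})$, over $X \setminus |D|$ the section $\bm{s}$ is nowhere vanishing, so $\varphi$ factors through $E(O_{\Sigma^{\bm{\lambda}}}) = E(G/G^\theta)$ and defines a multiplicative $(G,\theta)$-Higgs bundle. At each $x\in |D|$ I would trivialize $E$ and the line bundles $\Oo_X(D_i)$; the canonical section $s_i$ becomes $z^{m_i(x)}$ (with $m_i(x)$ the multiplicity of $x$ in $D_i$), so the composite $\Spec \OO_x \to \Aa^n \to \Ab_{\Env(G/G^\theta)}$ is $z^{-w_0\lambda_x}$ with $\lambda_x = \sum_i m_i(x)\lambda_i$. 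The defining Cartesian square of $\Sigma^{\bm{\lambda}}$ then identifies the formal lift of $\varphi$ at $x$ with a point of $\Env^{\lambda_x}(G/G^\theta)(\OO_x)$, which by Proposition \ref{prop:extender_loops} is equivalent to $\inv_x(\varphi) \leq \lambda_x$. Summing over $x$ yields $\inv(\varphi) \leq \bm{\lambda}\cdot \bm{D}$, identifying the fibre product with $\MM_{\bm{d},\bar{\bm{\lambda}}}(G,\theta)$. Conversely, any such $(G,\theta)$-Higgs bundle produces, via the local Cartesian description of $\Sigma^{\bm{\lambda}}$, a unique compatible family of formal lifts, which glues to a genuine global section of $E(\Sigma^{\bm{\lambda}})$. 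The leftmost square follows from the same local reasoning with $\Env^{\lambda_x, 0}$, using the stronger form of Proposition \ref{prop:extender_loops} to replace the inequality by equality and recover the strict singularity type defining $\MM_{\bm{d},\bm{\lambda}}(G,\theta)$.

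The main obstacle is the middle square: one must verify the local-to-global passage carefully, i.e., that a multiplicative $(G,\theta)$-Higgs bundle with the prescribed singularity bound genuinely lifts to a global section of $E(\Sigma^{\bm{\lambda}})$ and not merely to a compatible family of formal lifts. This reduces to the fact that the defining square of $\Sigma^{\bm{\lambda}}$ is Cartesian, so that formal lifts are entirely encoded by their images in $\Env(G/G^\theta)$ together with the abelianization, combined with the stratification of $\Env(G/G^\theta)(\OO)$ from Proposition \ref{prop:extender_loops}. A parallel concern, mostly bookkeeping, is to confirm that the sign conventions entering $\psi_{\Sigma^{\bm{\lambda}}}$, the Hitchin base formula $\Oo_X(\langle \varpi_i, w_0 \bm{\lambda}\cdot \bm{D}\rangle)$, and the invariant $\inv_x$ are all mutually consistent.
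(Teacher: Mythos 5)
Your proposal follows essentially the same route as the paper's proof: identify the Hitchin base bundle from Proposition \ref{invariant} and Guay's formula for $\psi$ to handle the rightmost square, then use the Cartesian definition of $\Sigma^{\bm{\lambda}}$ together with Proposition \ref{prop:extender_loops} to translate the formal local condition at each $x\in|D|$ into the inequality $\inv_x(\varphi)\leq\lambda_x$ (and equality for $\Sigma^{\bm{\lambda},0}$). The only difference is that you spell out the computation of $\psi_{\Sigma^{\bm\lambda}}$ and explicitly flag the converse (local-to-global lifting) step for the middle square, which the paper leaves implicit; your sketch of why that step works — Cartesianness of the defining square plus the stratification from Proposition \ref{prop:extender_loops} — is the right reason.
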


\begin{proof}
From the above discussions it is clear that for any $\bm{D}\in X_{\bm{d}}$ we can identify $\BB_{\Oo_X(\bm{D})}(\Sigma)$ with the space of sections of the bundle
\begin{equation*}
\Oo_X(\bm{D}) \oplus \bigoplus_{i=1}^l (\Oo_X(\bm{D})\times_{w_0\bm{\lambda}}\GG_m)_{\varpi_i}= \Oo_X(\bm{D}) \oplus \bigoplus_{i=1}^l \Oo_X(\langle \varpi_i, w_0\bm{\lambda}\cdot \bm{D}\rangle).
\end{equation*} 
It is now clear that the rightmost square is Cartesian. 

Take any point $x\in \lvert D \rvert$ and take $\lambda_x$ the coefficient of the divisor $\bm{\lambda} \cdot \bm{D}$ corresponding to $x$. This $\lambda_x$ is an antidominant cocharacter that can be written as
\begin{equation*}
\lambda_x = \bm{m}_x\cdot \bm{\lambda},
\end{equation*} 
where $\bm{m}_x=(m_{1x},\dots,m_{nx})$ is a vector with components $m_{ix}$ for $D_i=\sum_x m_{ix}x$. Now, by mapping $z\mapsto z^{\bm{m}_x}=(z^{m_{1x}},\dots,z^{m_{nx}})$ we obtain a morphism $\bm{m}:\OO\rightarrow \Aa^n$ and the following diagram, with all squares Cartesian
\begin{center}
\begin{tikzcd}
	\Env^{\lambda_x}(G/G^\theta) \ar{r} \ar{d} & \Sigma^{\bm{\lambda}} \ar{d} \rar & \Env(G/G^\theta) \ar{d}{\alpha_{\Env(G/G^\theta)}} \\
\Spec	\OO \ar{r}{\bm{m}} & \Aa^n \ar{r}{-w_0\bm{\lambda}} & \Ab_{\Env(G/G^\theta)}.
\end{tikzcd}
\end{center}
Now, if we restrict an element $(E,\varphi)\in\MM_X(\Sigma^{\bm{\lambda}})$ to $\Spec(F_x)$ we obtain an element of $(G/G^\theta)_+(F_x)$ which must lie in $\Env^{\lambda_x}(G/G^\theta)$. Therefore, by Proposition \ref{prop:extender_loops}, the image of $\varphi|_{\Spec(F_x)}$ belongs to $\overline{G(\OO_x)z^{\lambda_x}}$ and thus $\inv_x(\varphi)\leq \lambda_x$. Moreover, if we change $\Env(G/G^\theta)$ by $\Env^0(G/G^\theta)$ in the diagram above, the element $\varphi|_{\Spec(F_x)}$ lies in $\Env^{\lambda_x,0}(G/G^\theta)$ and therefore its image belongs to $G(\Oo_x)z^{\lambda_x}$, so $\inv_x(\varphi)=\lambda_x$.
\end{proof}

\subsubsection*{The non simply-connected case} Suppose now that $G$ is a semisimple group and consider $\pi:\hat{G}\rightarrow G$ the simply-connected cover.  This map is a central isogeny and thus its kernel is a subgroup of the centre $\pi_1(G)\subset Z_{\hat{G}}$, the fundamental group of $G$. Moreover, it is a well known result of Steinberg \cite{steinberg_endomorphisms}*{Theorem 9.16} that any involution $\theta\in \Aut_2(G)$ lifts to an involution $\hat{\theta}\in \Aut_2(\hat{G})$ making the following diagram commute
\begin{center}
\begin{tikzcd}
\hat{G} \ar{r}{\hat{\theta}} \ar{d}{\pi} & \hat{G} \ar{d}{\pi} \\
G \ar{r}{\theta} & G.
\end{tikzcd}
\end{center}

The following is also well-known.
\begin{lemma}
$\pi(\hat{G}^{\hat{\theta}})=G_0^\theta$.	
\end{lemma}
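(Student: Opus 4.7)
The plan is to verify the equality by showing both inclusions, using the key fact (already cited earlier in the paper from \cite{steinberg_endomorphisms}*{Theorem 8.1}) that since $\hat{G}$ is simply-connected, the fixed point subgroup $\hat{G}^{\hat{\theta}}$ is connected.

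First, I would establish the inclusion $\pi(\hat{G}^{\hat{\theta}}) \subset G_0^\theta$. From the commutative diagram, if $\hat{g}\in \hat{G}^{\hat{\theta}}$ then $\theta(\pi(\hat{g}))=\pi(\hat{\theta}(\hat{g}))=\pi(\hat{g})$, so $\pi(\hat{G}^{\hat{\theta}})\subset G^\theta$. Since $\hat{G}^{\hat{\theta}}$ is connected by Steinberg's result, its image under the morphism $\pi$ is a connected subset of $G^\theta$ containing the identity, hence it lies in $G_0^\theta$.

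For the reverse inclusion, I would compare Lie algebras. The isogeny $\pi$ induces an isomorphism $d\pi:\hat{\g}\overset{\sim}{\to}\g$ of Lie algebras, and by differentiating the commutative square defining the lift $\hat\theta$, this isomorphism intertwines the involutions $d\hat{\theta}$ and $d\theta$. Hence $d\pi$ restricts to an isomorphism $\hat{\g}^{\hat{\theta}}\overset{\sim}{\to}\g^\theta$, i.e.\ $\mathrm{Lie}(\hat{G}^{\hat{\theta}})\cong \mathrm{Lie}(G^\theta)=\mathrm{Lie}(G_0^\theta)$. Since $\pi$ is a morphism of algebraic groups, $\mathrm{Lie}(\pi(\hat{G}^{\hat{\theta}}))=d\pi(\mathrm{Lie}(\hat{G}^{\hat{\theta}}))=\mathrm{Lie}(G_0^\theta)$. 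Both $\pi(\hat{G}^{\hat{\theta}})$ (the image of a connected closed subgroup under a morphism of algebraic groups, hence a connected closed subgroup of $G$) and $G_0^\theta$ are connected closed subgroups of $G$, so having the same Lie algebra forces them to coincide.

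The only subtle point is the invocation of Steinberg's theorem to get connectedness of $\hat{G}^{\hat{\theta}}$; once this is in hand the argument is a standard dimension/Lie algebra comparison and presents no real obstacle. I do not expect the lifting of $\theta$ to $\hat\theta$ to cause any issue here, since its existence is already assumed as input from \cite{steinberg_endomorphisms}*{Theorem 9.16}.
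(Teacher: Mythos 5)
Your proof is correct, and it takes a genuinely different route from the paper. You prove the reverse inclusion by an infinitesimal argument: $d\pi$ is an isomorphism of Lie algebras (since $\pi$ is a central isogeny, hence \'etale in characteristic $0$) intertwining $d\hat\theta$ and $d\theta$, so it identifies $\mathrm{Lie}(\hat{G}^{\hat\theta})=\hat{\g}^{\hat\theta}$ with $\g^\theta=\mathrm{Lie}(G_0^\theta)$, and then connectedness plus the already-established inclusion $\pi(\hat{G}^{\hat\theta})\subset G_0^\theta$ forces equality. The paper instead argues group-theoretically: given $k\in G_0^\theta$, lift it to $g\in\hat{G}$, observe $g\hat\theta(g)^{-1}\in\pi_1(G)\subset Z_{\hat G}$ so $g\in\hat{G}_{\hat\theta}$, apply Richardson's decomposition $\hat{G}_{\hat\theta}=F_{\hat\theta}\hat{G}^{\hat\theta}$ to write $g=fg_0$, and then use uniqueness of the decomposition $G^\theta=F^\theta G_0^\theta$ downstairs to conclude $\pi(f)=1$. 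Your approach avoids invoking the $F_\theta$-decomposition entirely, at the cost of relying on the characteristic-$0$ fact that the fixed-point subgroup of an involution is smooth with Lie algebra the fixed subalgebra (so that $\mathrm{Lie}(G^\theta)=\g^\theta$); the paper's argument is slightly more elementary and would be more robust in situations where that smoothness is not automatic. One small point worth making explicit in your write-up: the conclusion ``same Lie algebra implies equal'' is justified because you already have the containment $\pi(\hat{G}^{\hat\theta})\subset G_0^\theta$ from the first half, so you are comparing a connected closed subgroup with a connected overgroup of the same dimension.
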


\begin{proof}
The inclusion $\pi(\hat{G}^{\hat{\theta}})\subset G_0^\theta$ is clear. Indeed, for any $g\in \hat{G}^{\hat{\theta}}$, we have 
\begin{equation*}
\theta(\pi(g))=\pi(\hat{\theta}(g))=\pi(g),
\end{equation*} 
so $\pi(\hat{G}^{\hat{\theta}})\subset G^\theta$ and the inclusion follows since $\pi$ is continuous and $\hat{G}^{\hat{\theta}}$ is connected.

For the other inclusion, take $k\in G_0^\theta$ and $g\in \hat{G}$ with $\pi(g)=k$. Now,
\begin{equation*}
\pi(g\hat{\theta}(g)^{-1})=k\theta(k)^{-1}=1.
\end{equation*} 
Thus, $g\hat{\theta}(g)^{-1}\in \pi_1(G)\subset Z_{\hat{G}}$, so $g\in \hat{G}_{\hat{\theta}}$. Recall now the decomposition $G_{\hat{\theta}}=F_{\hat{\theta}} \hat{G}^{\hat{\theta}}$, which implies that there exists some $f\in F_{\hat{\theta}}$ and some $g_0\in \hat{G}^{\hat{\theta}}$ such that $g=fg_0$. Let $k_0=\pi(g_0)\in G_0^\theta$. Clearly, $\pi(f)\in F^\theta$, so we obtain a decomposition
\begin{equation*}
k=\pi(f)k_0,
\end{equation*} 
with $k,k_0\in G_0^\theta$ and  $\pi(f)\in F^\theta$. This implies that $\pi(f)=1$, and thus $k=k_0=\pi(g_0)$, and $k\in \pi(\hat{G}^{\hat{\theta}})$.
\end{proof}

It follows from the lemma above that we can define an action of $G_0^\theta$ on $\hat{G}/\hat{G}^{\hat{\theta}}$ from the left multiplication action of $\hat{G}^{\hat{\theta}}$. Indeed, for any $k\in G_0^\theta$ we just have to take any $h\in \hat{G}^{\hat{\theta}}$ with $\pi(h)=k$ and define $k\cdot (gG^\theta)=hgG^\theta$. This is well defined since, if $h'$ is such that $\pi(h')=k$, then $h'=zh$ for some $z\in G^\theta \cap Z_G$ and thus $h'gG^\theta=hgG^\theta$.

For the following, we need to introduce a new object.

\begin{defn}
A \emph{multiplicative $(G,\theta)_0$-Higgs bundle} is a pair $(E,\varphi)$, where $E\rightarrow X$ is a $G_0^\theta$-bundle and $\varphi$ is a section of the associated bundle of symmetric varieties $E(G/G^\theta)$ defined over $X'$ the complement of a finite subset of $X$.
\end{defn}

The exact sequence $1\rightarrow \pi_1(G)\rightarrow \hat{G}\rightarrow G\rightarrow 1$, induces a fibration $\Gamma\hookrightarrow \hat{G}/\hat{G}^{\hat{\theta}}\twoheadrightarrow G/G^\theta$, for some finite group $\Gamma$. For any $G_0^\theta$-bundle, we can consider the bundles $E(G/G^\theta)$ and $E(\hat{G}/\hat{G}^{\hat{\theta}})$, associated to the actions of $G_0^\theta$ and we obtain a exact sequence of sets with a distinguished element (where $H^0$ stands for sets of sections)
\begin{center}
\begin{tikzcd}
	H^0(X', E(\hat{G}/\hat{G}^{\hat{\theta}})) \rar & H^0(X', E(G/G^\theta)) \ar{r}{\delta} & H^1(X',\Gamma).
\end{tikzcd}
\end{center}
It follows that to any multiplicative $(G,\theta)_0$-Higgs bundle $(E,\varphi)$ we can associate the invariant $\delta(\varphi)\in H^1(X',\Gamma)$. Now, the section $\varphi$ comes from a section of $E(\hat{G}/\hat{G}^{\hat{\theta}})$ if and only if $\delta(\varphi)=1$. Moreover, the map $\pi:\hat{G}^{\hat {\theta}}\rightarrow G_0^\theta$ induces a map $\mathrm{Bun}_{\hat{G}^{\hat{\theta}}}\rightarrow \mathrm{Bun}_{G_0^\theta}$. We conclude the following.

\begin{prop}
Any multiplicative $(G,\theta)_0$-Higgs bundle with $\delta(\varphi)=1$ is induced from a multiplicative $(\hat{G},\hat{\theta})$-Higgs bundle from the maps $\mathrm{Bun}_{\hat{G}^{\hat{\theta}}}\rightarrow \mathrm{Bun}_{G_0^\theta}$ and $\hat{G}/\hat{G}^{\hat{\theta}}\rightarrow G/G^\theta$.
\end{prop}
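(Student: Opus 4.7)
The plan is to construct a multiplicative $(\hat{G},\hat{\theta})$-Higgs bundle $(\hat{E},\hat{\psi})$ inducing $(E,\varphi)$ under the stated maps. The argument splits into lifting the section, which is immediate from the hypothesis, and lifting the underlying bundle, which is the substantive step.

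For the first step, the exact sequence of pointed sets
\[
H^0(X',E(\hat{G}/\hat{G}^{\hat{\theta}})) \to H^0(X',E(G/G^\theta)) \xrightarrow{\delta} H^1(X',\Gamma)
\]
displayed above the statement yields, from $\delta(\varphi)=1$, a section $\hat{\varphi}\in H^0(X',E(\hat{G}/\hat{G}^{\hat{\theta}}))$ projecting to $\varphi$; I view it as a $G_0^\theta$-equivariant map $\tilde{\varphi}\colon E|_{X'}\to \hat{G}/\hat{G}^{\hat{\theta}}$. For the second step, I pull back the right $\hat{G}^{\hat{\theta}}$-torsor $\hat{G}\to \hat{G}/\hat{G}^{\hat{\theta}}$ along $\tilde{\varphi}$ to form the fibred product
\[
\hat{E}'_{\mathrm{tot}} := E|_{X'}\times_{\hat{G}/\hat{G}^{\hat{\theta}}}\hat{G},
\]
which carries the natural right $\hat{G}^{\hat{\theta}}$-action from $\hat{G}$ together with a compatible right $G_0^\theta$-action $(e,h)\cdot g := (e\cdot g,\hat{g}^{-1}h)$ for any lift $\hat{g}\in \hat{G}^{\hat{\theta}}$ of $g$. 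The indeterminacy in $\hat{g}$ lies in the central kernel $K = \ker(\hat{G}^{\hat{\theta}}\to G_0^\theta)\subset Z_{\hat{G}}$ and is reconciled with the right torsor structure because $K$ is central, so that quotienting by the diagonal $G_0^\theta$-action yields a $\hat{G}^{\hat{\theta}}$-bundle $\hat{E}'\to X'$ with $\hat{E}'\times^{\hat{G}^{\hat{\theta}}}G_0^\theta \cong E|_{X'}$.

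To promote $\hat{E}'$ to a bundle $\hat{E}$ on $X$, I extend across each singular point $x\in X\setminus X'$: the restriction $E|_{\Spec \OO_x}$ is trivial, any of its lifts to a $\hat{G}^{\hat{\theta}}$-bundle is again trivial, and the compatibility with $\hat{E}'$ on the punctured disc $\Spec F_x$ can be arranged because $H^1(F_x,\hat{G}^{\hat{\theta}})=1$ by Steinberg's theorem. Since $K$ acts trivially on $\hat{G}/\hat{G}^{\hat{\theta}}$ by left multiplication, one has a canonical identification $\hat{E}(\hat{G}/\hat{G}^{\hat{\theta}})=E(\hat{G}/\hat{G}^{\hat{\theta}})$ under which $\hat{\varphi}$ becomes a section $\hat{\psi}$ of $\hat{E}(\hat{G}/\hat{G}^{\hat{\theta}})$ over $X'$, and $(\hat{E},\hat{\psi})$ is the required $(\hat{G},\hat{\theta})$-Higgs bundle. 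The main obstacle will be the descent step producing $\hat{E}'$: I need to verify that the diagonal $G_0^\theta$-action on $\hat{E}'_{\mathrm{tot}}$ interacts correctly with the right $\hat{G}^{\hat{\theta}}$-torsor structure, so that the quotient is a principal $\hat{G}^{\hat{\theta}}$-bundle over $X'$ rather than a larger object; the subsequent extension across $X\setminus X'$ and the check that $(\hat{E},\hat{\psi})$ induces $(E,\varphi)$ are then essentially formal.
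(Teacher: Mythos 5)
You correctly identify the construction of $\hat{E}'$ as the main obstacle, and that is precisely where the argument breaks down. The proposed right $G_0^\theta$-action $(e,h)\cdot g = (e\cdot g,\hat{g}^{-1}h)$ is not a well-defined action: replacing the lift $\hat{g}$ by $z\hat{g}$ with $z\in K = \ker(\hat{G}^{\hat{\theta}}\to G_0^\theta)$ changes the result by a right $K$-translation. The only honest lifted action on $\hat{E}'_{\mathrm{tot}}$ is the diagonal $\hat{G}^{\hat{\theta}}$-action $(e,h)\cdot k = (e\cdot\pi(k),k^{-1}h)$; it is free, commutes with the right $\hat{G}^{\hat{\theta}}$-action, and the quotient $\hat{E}' = \hat{E}'_{\mathrm{tot}}/\hat{G}^{\hat{\theta}}$ does fiber over $X'$. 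But the stabilizer of the residual right $\hat{G}^{\hat{\theta}}$-action on $\hat{E}'$ is exactly $K$: if $[(e,hk')]=[(e,h)]$ then $(e\cdot\pi(k),k^{-1}hk')=(e,h)$ for some $k\in\hat{G}^{\hat{\theta}}$, so $\pi(k)=1$ by freeness of $E_x$, hence $k\in K$, and then $k'=h^{-1}kh=k$ by centrality. Thus $\hat{E}'$ is a $\hat{G}^{\hat{\theta}}/K = G_0^\theta$-bundle over $X'$, and is in fact isomorphic to $E|_{X'}$; the construction produces nothing new and certainly not a lift.

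This is not a repairable detail of the construction but a symptom of a more basic issue: the hypothesis $\delta(\varphi)=1$ constrains only the Higgs field and never the underlying $G_0^\theta$-bundle $E$, because $E(\hat{G}/\hat{G}^{\hat{\theta}})$ always admits at least one section (the $G_0^\theta$-action on $\hat{G}/\hat{G}^{\hat{\theta}}$ fixes the base coset). Lifting $E$ from $G_0^\theta$ to $\hat{G}^{\hat{\theta}}$ has an obstruction $\mathrm{ob}(E)\in H^2(X,K)$, which for projective $X$ can be nonzero: take $\hat{G}=\SL_2$, $\hat{\theta}(g)=(g^T)^{-1}$, so $\hat{G}^{\hat{\theta}}=\mathrm{SO}_2\cong\GG_m$ mapping to $G_0^\theta=\mathrm{PSO}_2\cong\GG_m$ by squaring with $K\cong\mu_2$, and let $E$ be a line bundle of odd degree. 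Your extension step inherits the same problem: Steinberg's theorem shows some gluing of $\hat{E}'$ with a trivial bundle over $\Spec\OO_x$ exists, but not one compatible with the projection to $E$ — that compatibility requires lifting the $G_0^\theta(F_x)$-transition functions of $E$ to $\hat{G}^{\hat{\theta}}(F_x)$, and the local obstructions lie in $H^1(F_x,K)\cong K$ and assemble to $\mathrm{ob}(E)$. The paper's own argument is very terse, passing from the lift of $\varphi$ and the existence of $\mathrm{Bun}_{\hat{G}^{\hat{\theta}}}\to\mathrm{Bun}_{G_0^\theta}$ directly to the conclusion, so the lift of the bundle is left implicit there as well; but the specific pullback-and-descent construction you propose does not supply it.
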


The next step consists on relating multiplicative $(G,\theta)_0$-Higgs bundles with multiplicative $(G,\theta)$-Higgs bundles. We start by considering a $(G,\theta)$-Higgs bundle $(E,\varphi)$. The decomposition $G^\theta =F^\theta G_0^\theta$ allows us to define an isomorphism  $G^\theta\cong F^\theta \ltimes G_0^\theta$, and this induces a factorization of the $G^\theta$-bundle $E\rightarrow X$ as
\begin{center}
\begin{tikzcd}
E \ar{dd} \ar{dr} \\
& Y=E/G_0^\theta \ar{dl} \\
X.
\end{tikzcd}
\end{center}
Here, $Y\rightarrow X$ is a Galois étale cover with Galois group equal to $F^\theta$, while $E\rightarrow Y$ has the structure of a principal $G_0^\theta$-bundle. Moreover, the deck transformation $Y\rightarrow Y$ induced by an element $a\in F^\theta$ lifts to a map $\tilde{a}:E\rightarrow E$ and thus defines a right $F^\theta$-action on the $G_0^\theta$-bundle $E\rightarrow Y$, which clearly gives an $\Int$-twisted $F^\theta$-equivariant structure on it. By this we mean that, for any $e\in E$, $a\in F^\theta$ and $g\in G_0^\theta$, we have
\begin{equation*}
	\tilde{a}(e) \cdot g = \tilde{a}(e\cdot aga^{-1}).
\end{equation*} 
A similar argument allows us to see the bundle $E(G/G^\theta)\rightarrow X$ as a $F^\theta$-equivariant bundle on $Y$, and there is a bijective correspondence between sections of $E(G/G^\theta)\rightarrow X$ and $F^\theta$-equivariant sections of $E(G/G^\theta)\rightarrow Y$. We refer the reader to \cite{noconexo} for more details on these correspondences. We have proved the following.

\begin{prop}
Any multiplicative $(G,\theta)$-Higgs bundle is induced from a multiplicative $(G,\theta)_0$-Higgs bundle on a certain $F^\theta$-Galois cover $Y\rightarrow X$.	
\end{prop}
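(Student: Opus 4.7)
The plan is to exploit the decomposition $G^\theta=F^\theta G_0^\theta$ from Vust's proposition in Section \ref{split}. First, I would observe that because $A\cap G_0^\theta$ is trivial (it would be a $\theta$-split connected subgroup of the finite group $F^\theta$), the intersection $F^\theta\cap G_0^\theta$ is trivial and $G_0^\theta$ is normal in $G^\theta$, giving a genuine semidirect product isomorphism $G^\theta\cong F^\theta\ltimes G_0^\theta$ (where $F^\theta$ acts on $G_0^\theta$ by conjugation inside $G$). This yields a short exact sequence $1\to G_0^\theta\to G^\theta\to F^\theta\to 1$ that will govern the whole argument.

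Next, starting from a multiplicative $(G,\theta)$-Higgs bundle $(E,\varphi)$ with $E\to X$ a principal $G^\theta$-bundle, I would form the quotient $Y:=E/G_0^\theta$ and consider the sequence of maps $E\to Y\to X$. The first map is naturally a principal $G_0^\theta$-bundle (since $G_0^\theta$ is normal in $G^\theta$ and acts freely on $E$), while the second map, being the quotient of a principal $G^\theta$-bundle by the normal subgroup $G_0^\theta$, is an étale Galois cover with Galois group $F^\theta=G^\theta/G_0^\theta$ acting on the right. The residual right action of $F^\theta$ on $E$ covers its action on $Y$; denoting by $\tilde a:E\to E$ the automorphism induced by $a\in F^\theta$, one checks directly from the formula $e\cdot (ag)=(e\cdot a)\cdot g=\tilde a(e)\cdot g$ and the semidirect product relation that
\begin{equation*}
\tilde a(e)\cdot g=\tilde a(e\cdot aga^{-1}) \qquad \text{for all } e\in E,\ g\in G_0^\theta,\ a\in F^\theta,
\end{equation*}
so $E\to Y$ carries an $\Int$-twisted $F^\theta$-equivariant structure in the sense recalled in the paragraph before the statement.

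To handle the Higgs field, I would then identify sections of the associated bundle $E(G/G^\theta)\to X$ with $F^\theta$-equivariant sections of $E(G/G^\theta)\to Y$ (viewed now as the $G_0^\theta$-associated bundle of $E\to Y$, using the induced action of $G_0^\theta\subset G$ on $G/G^\theta$). This is a standard descent statement: pull-back from $X$ to $Y$ is fully faithful onto the $F^\theta$-invariants, and since the twisting by $\Int_a$ on $E$ is absorbed by the left multiplication on $G/G^\theta$, equivariant sections descend. Applied to $\varphi$, this produces an $F^\theta$-equivariant section $\tilde\varphi$ of $E(G/G^\theta)\to Y$, defined on the preimage of $X\setminus|D|$, which is the complement of a finite subset of $Y$. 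Thus $(E\to Y,\tilde\varphi)$ is a multiplicative $(G,\theta)_0$-Higgs bundle from which the original pair is induced by Galois descent along $Y\to X$.

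The main technical point I expect to have to justify carefully is the precise form of the $\Int$-twisted equivariance and the fact that it is exactly the condition that makes the $G_0^\theta$-associated bundle $E(G/G^\theta)$ descend correctly to $X$; this is where the non-centrality of $F^\theta$ in $G^\theta$ genuinely matters. As indicated in the text, a full treatment of this kind of descent for non-connected structure groups is given in \cite{noconexo}, which I would simply cite rather than reprove.
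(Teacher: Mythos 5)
Your proof follows essentially the same route as the paper: decompose $G^\theta\cong F^\theta\ltimes G_0^\theta$, factor the $G^\theta$-bundle $E\to X$ through $Y=E/G_0^\theta$, observe that $E\to Y$ is a $G_0^\theta$-bundle carrying an $\Int$-twisted $F^\theta$-equivariant structure, identify $F^\theta$-equivariant sections of $E(G/G^\theta)\to Y$ with sections over $X$, and cite \cite{noconexo} for the descent machinery. One small caveat: your parenthetical justification for $A\cap G_0^\theta$ being trivial (``a $\theta$-split \emph{connected} subgroup of the finite group $F^\theta$'') does not quite parse, since $A\cap G_0^\theta$ is a finite group and has no reason to be connected; the triviality of $F^\theta\cap G_0^\theta$ is better read off directly from the uniqueness clause in Vust's decomposition $G^\theta=F^\theta G_0^\theta$, as the paper implicitly does.
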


\section{Multiplicative Higgs bundles, involutions and fixed points} \label{invhiggs}
\subsection{Multiplicative Higgs bundles and moduli spaces}
Let $G$ be a reductive group and $X$ a smooth algebraic curve over $k$. Let $\bm{d}=(d_1,\dots,d_n)$ be a tuple of positive integers.

\begin{defn}
Let $\bm{D}\in X_{\bm{d}}$. A \emph{multiplicative $G$-Higgs bundle with singularities in $\bm{D}$} is a pair $(E,\varphi)$, where $E\rightarrow X$ is a $G$-bundle and $\varphi$ is a section of the adjoint bundle of groups $E(G)$ defined over the complement $X\setminus |D|$ of the support $|D|$ of the divisor $D$.

We denote by $\MM_{\bm{d}}(G)$ the moduli stack of tuples $(\bm{D},E,\varphi)$ with $\bm{D}\in X_{\bm{d}}$ and $(E,\varphi)$ a multiplicative $G$-Higgs bundle with singularities in $\bm{D}$.
\end{defn}

An invariant for multiplicative $G$-Higgs bundles can be defined in the same way as in Section \ref{gthetahiggs}. We simply restrict $\varphi$ to $\Spec F_x$ for any point $x\in \lvert D \rvert$ and obtain a well defined orbit  $G(\OO)z^\lambda G(\OO)$ for $\lambda\in \XX_*(T)_+$, and $T\subset G$ a maximal torus. We put $\inv_x(\varphi)=\lambda$. Thus, we can also define, for any tuple $\bm{\lambda}=(\lambda_1,\dots,\lambda_n)$ of dominant cocharacters $\lambda_i\in \XX^*(T)_+$, the finite-type moduli stack
\begin{equation*}
\MM_{\bm{d},\bm{\lambda}}(G)=\left\{(\bm{D},E,\varphi)\in \MM_{\bm{d}}(G): \inv(\varphi) = \bm{\lambda}\cdot \bm{D}\right\},
\end{equation*} 
and the bigger stack
\begin{equation*}
\MM_{\bm{d},\bar{\bm{\lambda}}}(G)=\left\{(\bm{D},E,\varphi)\in \MM_{\bm{d}}(G): \inv(\varphi) \leq \bm{\lambda}\cdot \bm{D}\right\}.
\end{equation*} 

It is clear that multiplicative $G$-Higgs bundles are a particular case of the pairs introduced in Definition \ref{def:gthetahiggs}, for the group $G\times G$ endowed with the involution $\Theta(g_1,g_2)=(g_2,g_1)$ (see Example \ref{ex:diagonal} for more details). This picture can also be related to the picture of Section \ref{higgssymmetric} by using \emph{Vinberg's enveloping monoid} \cite{vinberg}; this has been studied in the works of Bouthier and J. Chi \cites{bouthier_Springer, bouthier_fibration, bouthier-chi,chi} and in the thesis of G. Wang \cite{wang}. 

For any involution $\theta\in \Aut_2(G)$, there is a natural map 
\begin{align*}
\MM_{\bm{d}}(G,\theta) & \longrightarrow  \MM_{\bm{d}}(G) \\
(\bm{D},E,\varphi) & \longmapsto (\bm{D},E_G,\varphi_G),
\end{align*} 
sending a multiplicative $(G,\theta)$-Higgs bundle $(E,\varphi)$ to the multiplicative $G$-Higgs bundle $(E_G,\varphi_G)$, where
\begin{equation*}
E_G=E\times_{G^\theta} G
\end{equation*} 
is the natural extension of the structure group of $E$ from $G^\theta$ to $G$ and, regarding $\varphi$ as a $G^\theta$-equivariant map $f:E|_{X\setminus |D|}\rightarrow M^\theta$, the section $\varphi_G$ is determined by the map
\begin{align*}
f_G: E_G|_{X\setminus |D|} & \longrightarrow G \\
[e,g] & \longmapsto gf(e)g^{-1}.
\end{align*} 
Here, we are identifying the symmetric variety $G/G^\theta$ with the subvariety $M^\theta=\tau^\theta(G)$ so that for any $g\in G^\theta$, the image  $gf(e)g^{-1}$ stays in $M^\theta$. Moreover, recall that for any involution $\theta\in \Aut_2(G)$ we have a natural inclusion $\XX_*(A_{G^\theta})\subset \XX_*(T)$, for $A\subset G$ a maximal $\theta$-split torus and $T$ a maximal torus containing it. Now, the map $\MM_{\bm{d}}(G,\theta)\rightarrow \MM_{\bm{d}}(G)$ restricts to a map
\begin{align*}
\MM_{\bm{d},\bm{\lambda}}(G,\theta) & \longrightarrow \MM_{\bm{d},w_0\bm{\lambda}}(G). 
\end{align*} 
Indeed, we have the following lemma.

\begin{lemma}
	For any multiplicative $(G,\theta)$-Higgs bundle $(E,\varphi)$ and any singular point $x$ of it, we have
$\inv_x(\varphi_G)=w_0 \inv_x(\varphi)$.
\end{lemma}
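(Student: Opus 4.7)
The claim is local at $x$, so the plan is to trivialize the $G^\theta$-bundle $E$ on the formal disk $\Spec(\OO_x)$, compare the local forms of $\varphi$ and $\varphi_G$ at the generic point $\Spec(F_x)$, and read off the two invariants from the loop and Cartan decompositions.

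Under the chosen trivialization, $\varphi|_{\Spec(F_x)}$ becomes a formal loop $\phi\in (G/G^\theta)(F_x)$, which Proposition~\ref{prop:loopdecomp} writes uniquely as $\phi=g_0\cdot z^\lambda$ with $g_0\in G(\OO_x)$ and $\lambda=\inv_x(\varphi)\in \XX_*(A_{G^\theta})_-$. The induced trivialization of $E_G=E\times_{G^\theta}G$ sends $\varphi_G|_{\Spec(F_x)}$ to the image $\phi_G\in G(F_x)$ of $\phi$ under the $G$-equivariant identification $G/G^\theta\cong M^\theta\hookrightarrow G$ (with $G$ acting by left multiplication on $G/G^\theta$ and by $\theta$-twisted conjugation on $M^\theta$, as in the discussion after Example~\ref{ex:diagonal}). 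By $G$-equivariance,
\begin{equation*}
\phi_G= g_0 \ast_\theta z^\lambda_G = g_0\, z^\lambda_G\, \theta(g_0)^{-1},
\end{equation*}
where $z^\lambda_G\in G(F_x)$ denotes the image of $z^\lambda$ in $M^\theta\subset G$. The heart of the computation is that the composition $A\hookrightarrow G/G^\theta\cong M^\theta\subset G$ restricts on the $\theta$-split torus to $a\mapsto a\theta(a)^{-1}=a^{2}$, and its kernel $A[2]$ equals $F^\theta=A\cap G^\theta$; the map therefore descends to an isomorphism of tori $A_{G^\theta}\xrightarrow{\sim} A\subset T$, which realises on cocharacter lattices the natural inclusion $\XX_*(A_{G^\theta})\hookrightarrow\XX_*(T)$ recalled just before the lemma. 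With $\lambda$ thus read inside $\XX_*(T)$, one has $z^\lambda_G=z^\lambda$ in $G(F_x)$.

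To finish, since $\theta$ preserves $G(\OO_x)$, both $g_0$ and $\theta(g_0)^{-1}$ belong to $G(\OO_x)$, so $\phi_G\in G(\OO_x)\, z^\lambda\, G(\OO_x)$. A short verification using $\alpha|_A=\bar{\alpha}$ for $\alpha\in\Phi_T$ and the antidominance of $\lambda$ with respect to $\Delta_\theta$ shows that the image of $\lambda$ is antidominant in $\XX_*(T)_-$; by the Cartan decomposition of $G(F_x)$, its unique dominant $W$-conjugate $w_0\lambda$ is precisely $\inv_x(\varphi_G)$. The main obstacle, I expect, is to check cleanly that the ``natural inclusion'' $\XX_*(A_{G^\theta})\hookrightarrow \XX_*(T)$ coincides with the cocharacter map induced by the squaring isomorphism $A_{G^\theta}\cong A\subset T$; once this identification is made, the equality $\inv_x(\varphi_G)=w_0\,\inv_x(\varphi)$ drops out directly from the $G$-equivariance of $G/G^\theta\cong M^\theta$ and the Cartan decomposition.
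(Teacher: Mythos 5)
Your argument is correct and follows essentially the same approach as the paper's proof, which reformulates the decomposition $(G/G^\theta)(F)=\bigsqcup_\lambda G(\OO)z^\lambda$ as $M^\theta(F)=\bigsqcup_\lambda G(\OO)*_\theta z^\lambda$ and then observes directly that the inclusion $M^\theta(F)\subset G(F)$ sends each orbit $G(\OO)*_\theta z^\lambda$ into $G(\OO)z^\lambda G(\OO)=G(\OO)z^{w_0\lambda}G(\OO)$. You are more explicit about the squaring identification $A_{G^\theta}\xrightarrow{\sim} A$ realizing the lattice inclusion $\XX_*(A_{G^\theta})\hookrightarrow\XX_*(T)$, which the paper leaves implicit, but the underlying mechanism is identical.
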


\begin{proof}
The decomposition $(G/G^\theta)(F)=\bigsqcup_{\lambda \in \XX_*(A_{G_\theta})_-} G(\OO)z^\lambda$ can be reformulated in terms of $M^\theta$ as
\begin{equation*}
M^\theta(F)=\bigsqcup_{\lambda \in \XX_*(A_{G_\theta})_-} G(\OO)*_{\theta}z^\lambda.
\end{equation*} 
Now, the inclusion $M^\theta\subset G$ induces an inclusion $M^\theta(F)\subset G(F)$ and it is clear that each orbit $G(\OO)*_\theta z^\lambda$ is mapped inside the orbit $G(\OO)z^\lambda G(\OO)=G(\OO)z^{w_0\lambda}G(\OO)$.
\end{proof}

Note that a section $\varphi$ of the adjoint bundle $E(G)$ is simply an automorphism $\varphi:E\rightarrow E$. A \emph{morphism} of multiplicative $G$-Higgs bundles $(E_1,\varphi_1)\rightarrow (E_2,\varphi_2)$ is then a map $E_1\rightarrow E_2$ such that the following square commutes
\begin{center}
\begin{tikzcd}
E_1|_{X\setminus |D|} \ar{r} \ar{d}{\varphi_1} & E_2|_{X\setminus |D|} \ar{d}{\varphi_2} \\
 E_1|_{X\setminus |D|}\ar{r} & E_2|_{X\setminus |D|}.
\end{tikzcd}
\end{center}
We say that a multiplicative $G$-Higgs bundle $(E,\varphi)$ is \emph{simple} if its only automorphisms are those given by multiplying by elements of the centre $Z_G\subset G$.

A \emph{moduli space} for simple multiplicative $G$-Higgs bundles can be considered after an argument by Hurtubise and Markman \cite{hurtubise-markman} that we explain now. Let $\rho:G\hookrightarrow \GL(V)$ be a faithful representation of $G$ and, for $T\subset G$ a maximal torus, let
\begin{equation*}
\PP(V)=\left\{\chi \in \XX^*(T): \exists v\in V\setminus \left\{0\right\}\text{ such that } \rho(t)v=t^\chi v, \forall t\in T\right\}
\end{equation*} 
be its weight lattice. For any cocharacter $\lambda \in \XX_*(T)$ we define the number
\begin{equation*}
d_\rho(\lambda)=\min\left\{\langle w\lambda,\chi\rangle : w\in W_T, \chi\in \PP(V)\right\},
\end{equation*} 
for $W_T$ the Weyl group of $T$. Now, if $\lambda\in \XX_*(T)_+$, for any $\phi\in G(F)$ that lies in the orbit $G(\OO)z^\lambda G(\OO)$, the highest pole order in the coefficients of the matrix $\rho(\phi)\in \End V \otimes F$ is $d_\rho(\lambda)$. Thus, if we write $\bm{\lambda}\cdot \bm{D}=\sum_{x\in X}\lambda_x x$, for $\bm{\lambda}=(\lambda_1,\dots,\lambda_n)$ a tuple of dominant cocharacters and $\bm{D}\in X_{\bm{d}}$, we can consider the divisor
\begin{equation*}
	(\bm{\lambda}\cdot \bm{D})_\rho = -\sum_{x\in X} d(\lambda_x)x,
\end{equation*} 
and for any multiplicative $G$-Higgs bundle $(E,\varphi)$ with singularity type $(\bm{D},\bm{\lambda})$, the representation $\rho$ induces a section
\begin{equation*}
\rho(\varphi)\in H^0(X\setminus |D|,\End E(V) \otimes \Oo_X((\bm{\lambda}\cdot \bm{D})_\rho)).
\end{equation*} 
Indeed, if $\bm{\lambda}\geq \bm{\mu}$, then $d_\rho(\lambda)\leq d_\rho(\mu)$, so $(\bm{\lambda}\cdot \bm{D})_\rho \geq (\bm{\mu}\cdot \bm{D})_\rho$.

A pair $(E,\varphi)$, where $E$ is a $G$-bundle on $X$ and $\varphi$ is a section of $\End E(V)\otimes L$ for some representation $\rho:G\rightarrow \GL(V)$ and for $L$ a line bundle, is called an \emph{$L$-twisted $\rho$-Higgs bundle}. We denote the moduli stack of these pairs by $\Higgs_{L,\rho}(G)$. What we have just proved is that $\rho$ induces a well-defined inclusion
\begin{align*}
\MM_{\bm{d},\bar{\bm{\lambda}}}(G)_{\bm{D}} & \longrightarrow \Higgs_{\Oo_X((\bm{\lambda}\cdot \bm{D})_\rho),\rho}(G). 
\end{align*} 
The stability conditions and good moduli space theory for $L$-twisted $\rho$-Higgs bundles are well known (the reader may refer to \cite{oscar-gothen-mundet} or \cite{schmitt} for general treatments of the topic). The existence of moduli spaces of simple $L$-twisted $\rho$-Higgs bundles guarantees the existence of good moduli spaces of simple pairs $\Mm_{\bm{d},\bm{\lambda}}(G)$ and $\Mm_{\bm{d},\bar{\bm{\lambda}}}(G)$ for $\MM_{\bm{d},\bm{\lambda}}(G)$ and $\MM_{\bm{d},\bar{\bm{\lambda}}}(G)$, at least as algebraic spaces. For more details, see Remark 2.5 in \cite{hurtubise-markman}. Finally, for any involution $\theta\in \Aut_2(G)$ we can consider the space $\Mm_{\bm{d},\bm{\lambda}}(G,\theta)$ defined as the intersection of $\Mm_{\bm{d},w_0\bm{\lambda}}(G)$  with the image of the map $\MM_{\bm{d},\bm{\lambda}}(G,\theta)\rightarrow \MM_{\bm{d},w_0\bm{\lambda}}(G)$. Analogously, we can define $\Mm_{\bm{d},\bar{\bm{\lambda}}}(G,\theta)$.

For most discussions in this section, we will be interested in fixing the divisor $\bm{D}$, and thus we denote the fibre $\Mm_{\bm{D},\lambda}(G)=\Mm_{\bm{d},\lambda}(G,\theta)_{\bm{D}}$, and similarly for $\Mm_{\bm{D},\lambda}(G,\theta)$.

\subsection{Involutions on the moduli space} 
Let $\theta\in \Aut_2(G)$ be an involution. To any multiplicative $G$-Higgs bundle $(E,\varphi)$ over $X$ we can associate other two multiplicative $G$-Higgs bundles obtained from $(E,\varphi)$ and the action of the involution $\theta$. These are the pairs
\begin{equation*}
\iota^\theta_{\pm}(E,\varphi) = (\theta(E),\theta(\varphi)^{\pm 1}).
\end{equation*} 
Here, $\theta(E)$ stands for the associated principal bundle $E\times_\theta G$ or, equivalently, it has the same total space as $E$ but it is equipped with the $G$-action $e\cdot_\theta g= e\cdot \theta(g)$. The section $\theta(\varphi)$ is obtained as the composition of $\varphi$ with the natural map
\begin{align*}
E(G) & \longrightarrow \theta(E)(G) \\
[e,g] & \longmapsto [e,\theta(g)].
\end{align*} 
Equivalently, if we regard $\varphi$ as an automorphism $\varphi:E|_{X'}\rightarrow E|_{X'}$, then $\theta(\varphi)$ is set theoretically the same map as $\varphi$, but now regarded as an automorphism of $\theta(E)|_{X'}$. We can give one last interpretation of $\theta(\varphi)$ by noting that the associated $G$-equivariant maps $f_\varphi,f_{\theta(\varphi)}:E|_{X'}\rightarrow G$ are related by
\begin{equation*}
f_{\theta(\varphi)}=\theta \circ f_\varphi.
\end{equation*} 

Let $T\subset G$ be a $\theta$-stable maximal torus and $B\subset G$ a Borel subgroup contained in a minimal $\theta$-split parabolic and containing $T$. Given any dominant cocharacter $\lambda \in \XX_*(T)_+$, the associated cocharacter $\theta(\lambda)$ is not dominant in general, but there is a dominant cocharacter in its orbit under the action of the Weyl group $W_T$. We denote this element by $\theta(\lambda)_+$. The map $\lambda \mapsto \theta(\lambda)_+$ defines an involution on $\XX_*(T)_+$. One checks immediately that, if $x$ is a singular point of $(E,\varphi)$, then
\begin{equation*}
\inv_x(\theta(E),\theta(\varphi)^{\pm}) = (\pm \theta(\inv_x(E,\varphi)))_+.
\end{equation*} 

Therefore, if $\bm{\lambda}=(\lambda_1,\dots,\lambda_n)$ is a tuple of dominant coweights $\lambda_i \in \XX_*(T)$ with $\theta(\lambda_i)_+=\pm \lambda_i$, then for any $\bm{D}\in X_{\bm{d}}$, we have the following involutions on the moduli space of simple pairs
\begin{align*}
\iota^\theta_{\pm}: \Mm_{\bm{D},\bm{\lambda}}(G) & \longrightarrow \Mm_{\bm{D},\bm{\lambda}}(G) \\
[E,\varphi] & \longmapsto [\theta(E),\theta(\varphi)^{\pm 1}],
\end{align*} 
where $\iota^\theta_+$ is well defined if $\theta(\lambda_i)_+=\lambda_i$ and $\iota^\theta_-$ if $(-\theta(\lambda_i))_+=\lambda_i$. We will denote these two involutions together by writing $\iota_\eps^\theta$, for $\eps=\pm 1$.

Moreover, note that the actions of the inner automorphisms on the moduli space is trivial since, if $\alpha=\Int_g$ is an inner automorphism of $G$, then the map $e\mapsto e\cdot g^{-1}$ gives an isomorphism between any principal $G$-bundle $E$ and $E\times_{\alpha} G$, commuting with any $\varphi$. Therefore, the above involutions are well defined at the level of the outer class $a$ of $\theta$ in $\Out_2(G)$. That is, given any element $a\in \Out_2(G)$, we can define
\begin{align*}
\iota^a_{\eps}: \Mm_{\bm{D},\bm{\lambda}}(G) & \longrightarrow \Mm_{\bm{D},\bm{\lambda}}(G) \\
[E,\varphi] & \longmapsto [\theta(E),\theta(\varphi)^{\eps}],
\end{align*} 
for any representative $\theta$ of the class $a$.

\begin{rmk}
Note that it follows from the Cartan decomposition of the formal loop group $G(F)$ that if $\theta$ and $\theta'$ are two involutions representing the same element in $\Out_2(G)$, then $\theta(\lambda)_+=\theta'(\lambda)_+$.	
\end{rmk}

We are interested in studying the spaces $\Mm_{\bm{D},\bm{\lambda}}(G)^{\iota^a_{\eps}}$ of fixed points under these involutions. For this it will be useful for us to think about an isomorphism between the $G$-bundles $E$ and $\theta(E)$, for $\theta\in \Aut_2(G)$, as a \emph{$\theta$-twisted automorphism}. By this we mean an automorphism $\psi:E\rightarrow E$ of the total space $E$, such that
\begin{equation*}
\psi(e\cdot g)=\psi(e)\cdot \theta(g).
\end{equation*} 
Therefore, a multiplicative $G$-Higgs bundle $(E,\varphi)$ will be a fixed point of the involution $\iota^a_\eps$ if and only if there exists a $\theta$-twisted automorphism $\psi:E\rightarrow E$, for any element $\theta$ of the class $a\in \Out_2(G)$, such that the following diagram commutes
\begin{center}
\begin{tikzcd}
E|_{X'} \ar{r}{\psi} \ar{d}{\varphi} & E|_{X'} \ar{d}{\varphi^\eps} \\
E|_{X'} \ar{r}{\psi} & E|_{X'}.
\end{tikzcd}
\end{center}
Equivalently, if we define $f_\psi:E\rightarrow G$ as $\psi(e)=e\cdot f_\psi(e)$, then $(E,\varphi)\in \Mm_{\bm{D},\bm{\lambda}}(G)^{\iota^a_\eps}$ if and only if, for any $e\in E$, we have
\begin{equation*}
f_\psi(e)\theta(f_\varphi(e))f_\psi(e)^{-1}=f_\varphi(e)^{\eps}.
\end{equation*} 

\subsection{Fixed dominant cocharacters}
Before describing the fixed points of the involution $\iota^\theta_-$, it is important that we give a good description of the involution at the level of dominant cocharacters
\begin{align*}
\XX_*(T)_+ & \longrightarrow \XX_*(T)_+ \\
\lambda & \longmapsto (-\theta(\lambda))_+,
\end{align*} 
for $T$ a $\theta$-stable maximal torus.

As we mentioned above, in general $-\theta(\lambda)$ is not a dominant cocharacter, and that is why we need to take $(-\theta(\lambda))_+$ the dominant cocharacter in its Weyl group orbit. However, there is an exception to this, which is when there are no imaginary roots. 

Indeed, the dominant Weyl chamber in $\XX_*(T)_+ \otimes_{\ZZ} \mathbb{Q}$ is spanned by a basis of fundamental coweights $\left\{\mu_1,\dots,\mu_r\right\}$, which is the dual basis of the set of simple roots $\Delta_T=\left\{\alpha_1,\dots,\alpha_r\right\}$. Since $\theta$ is an involution, we have $\langle \chi^\theta, \theta(\lambda)\rangle = \langle \chi,\lambda \rangle$ for any $\chi\in \XX^*(T)$ and any $\lambda \in \XX_*(T)$. Therefore, $\left\{\theta(\mu_1),\dots,\theta(\mu_r)\right\}$ is the dual basis of $\left\{\alpha^\theta_1,\dots,\alpha_r^\theta\right\}$. Recall now that there is an involution $\sigma$ on the set of simple roots which are not imaginary such that $\alpha^\theta_i+\alpha_{\sigma(i)}$ is an imaginary root. Thus, if there are no imaginary roots, we conclude that $\alpha_i^\theta=-\alpha_{\sigma(i)}$ and $\theta(\mu_i)=-\mu_{\sigma(i)}$. Summing up, we have the following.

\begin{lemma}
If $\Phi_T^\theta = \varnothing$, then for any dominant cocharacter $\lambda\in \XX_*(T)_+$, the cocharacter $-\theta(\lambda)$ is also dominant, and thus $(-\theta(\lambda))_+=-\theta(\lambda)$.	
\end{lemma}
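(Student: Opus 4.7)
The plan is to combine the duality between simple roots and fundamental coweights with the structural fact, already noted in the discussion preceding the statement, that in the absence of imaginary roots the map $\alpha \mapsto -\alpha^\theta$ permutes the set of simple roots. Concretely, I will show that $-\theta$ sends the dominant Weyl chamber to itself, and from this dominance of $-\theta(\lambda)$ is immediate.

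The first step is to recall that a cocharacter $\lambda$ is dominant if and only if $\langle \alpha_i, \lambda \rangle \geq 0$ for every simple root $\alpha_i \in \Delta_T$, and that the pairing is invariant under the diagonal $\theta$-action in the sense that $\langle \chi^\theta, \theta(\mu) \rangle = \langle \chi, \mu \rangle$, hence also $\langle \chi, \theta(\mu) \rangle = \langle \chi^\theta, \mu \rangle$ since $\theta^2 = \id$. Applying this to $\chi = \alpha_i$, I get
\begin{equation*}
\langle \alpha_i, -\theta(\lambda) \rangle = -\langle \alpha_i^\theta, \lambda \rangle = \langle -\alpha_i^\theta, \lambda \rangle.
\end{equation*}
So the question reduces to checking that each $-\alpha_i^\theta$ pairs non-negatively with $\lambda$.

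The second step is the key structural input, which I will import directly from the paragraph preceding the lemma. There it was argued that there exists a permutation $\sigma$ (of order two) on the indices of the non-imaginary simple roots such that $\alpha_i^\theta + \alpha_{\sigma(i)}$ is a positive imaginary root. Under the hypothesis $\Phi_T^\theta = \varnothing$ of the lemma, there are no imaginary roots at all, so this sum must vanish, giving $\alpha_i^\theta = -\alpha_{\sigma(i)}$ for every $i$. Therefore $-\alpha_i^\theta = \alpha_{\sigma(i)} \in \Delta_T$ is itself a simple root.

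The final step is to assemble: for $\lambda \in \XX_*(T)_+$ and any simple $\alpha_i$,
\begin{equation*}
\langle \alpha_i, -\theta(\lambda) \rangle = \langle \alpha_{\sigma(i)}, \lambda \rangle \geq 0,
\end{equation*}
so $-\theta(\lambda)$ lies in the dominant chamber, which forces $(-\theta(\lambda))_+ = -\theta(\lambda)$. I do not expect a real obstacle here: the only nontrivial ingredient is the identity $\alpha_i^\theta = -\alpha_{\sigma(i)}$ in the imaginary-root-free case, and this was essentially already established in the discussion of the restricted root system. The rest is a short verification at the level of the pairing on simple roots.
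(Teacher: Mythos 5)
Your proof is correct and takes essentially the same approach as the paper: both rely on the fact that, in the absence of imaginary roots, $\alpha_i^\theta = -\alpha_{\sigma(i)}$ together with the $\theta$-invariance of the pairing $\langle \cdot,\cdot\rangle$. The only cosmetic difference is that you verify dominance directly by pairing $-\theta(\lambda)$ against each simple root, whereas the paper phrases the same calculation dually, showing that $-\theta$ permutes the fundamental coweights $\mu_i \mapsto \mu_{\sigma(i)}$ and hence preserves the dominant cone; the two are equivalent by duality of simple roots and fundamental coweights.
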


Recall from Lemma \ref{prop:quasisplit_existence} that for any involution $\theta\in \Aut_2(G)$ there exists an inner automorphism $u_\theta \in \Int(G)$ such that the involution $\theta_q:=u_\theta \circ \theta$ is quasisplit. Therefore, if we let $T_q$ be a maximal $\theta_q$-stable torus, the involution $\lambda \mapsto (-\theta_q(\lambda))_+$ is just the involution $\lambda \mapsto -\theta_q(\lambda)$. The fixed points of this involution are simply the dominant cocharacters $\XX_*(A_q)_+$, where $A_q$ is the maximal $\theta$-split torus formed by the anti-fixed points of $\theta_q$ in $T_q$.

The fixed points of the involution $\lambda \mapsto -\theta(\lambda)$ in $\XX_*(T)$ are the cocharacters $\XX_*(A)$ of the maximal $\theta$-split torus $A$ of anti-fixed points of $\theta$ in $T$. Any two maximal tori are conjugate and thus there is a canonical bijection $\XX_*(T)_+\rightarrow \XX_*(T_q)_+$. It follows from the above that under this bijection the dominant cocharacters $\XX_*(A)_+$ are mapped inside $\XX_*(A_q)_+$, but in general not every element of $\XX_*(A_q)_+$ comes from an element of $\XX_*(A)_+$. In fact, the image of $\XX_*(A)_+$ under this map is the set
\begin{equation*}
\XX_*(A_q)_+^{u_\theta}= \left\{\lambda \in \XX_*(A_q)_+: u_\theta(\lambda) = \lambda \right\}.
\end{equation*} 

Given an element $a\in \Out_2(G)$, a quasisplit representative $\theta_q$ of $a$ and a dominant cocharacter $\lambda \in \XX_*(A_q)_+$, we can consider the set
\begin{equation*}
a_\lambda = \left\{\theta \text{ representative of } a: u_\theta(\lambda)=\lambda \right\}.
\end{equation*} 
That is, the elements of $a_\lambda$ are the involutions $\theta$ for which $\lambda$ is in the image of the cocharacters of a maximal $\theta$-split torus. This set clearly descends to the clique $\cl^{-1}(a)$ and we can define
\begin{equation*}
\cl^{-1}(a)_\lambda = \left\{[\theta] \in \cl^{-1}(a): u_\theta(\lambda) = \lambda\right\}.
\end{equation*} 

\subsection{The fixed points}
We move on now to describe the fixed points of the involutions $\iota_\eps^a$, for $a\in \Out_2(G)$. A first result is clear.

\begin{prop}\label{inclusion} Let $\theta\in \Aut_2(G)$ be a representative of $a$, $T\subset G$ a maximal $\theta$-stable torus and $B\subset G$ a Borel subgroup contained in a minimal $\theta$-split parabolic and containing $T$. Let $\bm{\lambda}$ be a tuple of dominant cocharacters of $T$.
	\begin{enumerate}
\item If the $\lambda_i \in \XX_*(T^\theta)$, the subspace $\widetilde{\Mm}_{\bm{D},\bm{\lambda}}(G^\theta)\subset \Mm_{\bm{D},\bm{\lambda}}(G)$, defined as the intersection of $\Mm_{\bm{D},\bm{\lambda}}(G)$ with the image of the moduli stack of multiplicative $G^\theta$-bundles of type $(\bm{D},\bm{\lambda})$ under extension of the structure group, is contained in the fixed point subspace $\Mm_{\bm{D},\bm{\lambda}}(G)^{\iota^a_+}$.
\item Any extension $(E_G,\varphi_G)$ of a pair $(E,\varphi)$ where $E$ is a $G^\theta$-bundle and $\varphi$ is a section of $E|_{X\setminus |D|}(S^\theta)$, for the conjugation action of $G^\theta$ on $S^\theta$, is a fixed point of $\iota^a_-$. In particular, if $\bm{\lambda}=(-\theta(\bm{\lambda}))_+$, $\Mm_{\bm{D},\bm{\lambda}}(G,\theta)\subset \Mm_{\bm{D},\bm{\lambda}}(G)^{\iota^a_-}$. 
	\end{enumerate}
\end{prop}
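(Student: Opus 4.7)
The plan is to exploit the characterization just given: $(E,\varphi) \in \Mm_{\bm{D},\bm{\lambda}}(G)^{\iota^a_\eps}$ precisely when there exists a $\theta$-twisted automorphism $\psi:E\to E$ with $f_\psi(e)\,\theta(f_\varphi(e))\,f_\psi(e)^{-1} = f_\varphi(e)^{\eps}$ for every $e\in E$, for any chosen representative $\theta$ of $a$. In both parts of the statement the reduction of structure group to $G^\theta$ provides a canonical candidate $\psi$, and verifying the intertwining condition reduces to a short direct computation whose crucial input is that $\theta$ restricts to the identity on $G^\theta$.

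For (1) I would write $E=E_0\times_{G^\theta}G$ and realize $\varphi$ through a $G^\theta$-equivariant map $f_0:E_0\to G^\theta$ (conjugation action), so that $f_\varphi([e_0,g])=g^{-1}f_0(e_0)g$. The key definition is
\[
\psi([e_0,g])=[e_0,\theta(g)],
\]
which is well-defined exactly because $\theta|_{G^\theta}=\id$. A one-line check gives $\psi(p\cdot g)=\psi(p)\cdot\theta(g)$, so $\psi$ is $\theta$-twisted with $f_\psi([e_0,g])=g^{-1}\theta(g)$. Since $\theta(f_0(e_0))=f_0(e_0)$, substitution yields $f_\psi\,\theta(f_\varphi)\,f_\psi^{-1}=f_\varphi$, establishing that $(E,\varphi)$ is fixed by $\iota^a_+$.

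For (2) I would re-use the same $\psi$, but now $f_0:E\to S^\theta$, so $\theta(f_0(e))=f_0(e)^{-1}$; the analogous substitution produces $f_\varphi^{-1}$ on the right-hand side, which is exactly the $\iota^a_-$-fixed point equation. For the final assertion, I would identify $G/G^\theta\cong M^\theta$ via $gG^\theta\mapsto g\theta(g)^{-1}$ and note that (a) $M^\theta\subset S^\theta$, because any $s=g\theta(g)^{-1}$ satisfies $s\theta(s)=1$, and (b) the left-multiplication action of $G^\theta$ on $G/G^\theta$ transports to the $\theta$-twisted conjugation on $M^\theta$, which for elements of $G^\theta$ coincides with ordinary conjugation. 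Hence a multiplicative $(G,\theta)$-Higgs bundle is a particular instance of the situation just analysed, and lies inside $\Mm_{\bm{D},\bm{\lambda}}(G)^{\iota^a_-}$.

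The singularity-type hypothesis $\bm{\lambda}=(-\theta(\bm{\lambda}))_+$ is needed only to make $\iota^a_-$ a self-map of $\Mm_{\bm{D},\bm{\lambda}}(G)$; for extensions of $(G,\theta)$-Higgs bundles it is automatic, since $\inv_x(\varphi_G)=w_0\inv_x(\varphi)$ lies in $w_0\XX_*(A_{G^\theta})$, on which $\theta$ acts by negation because $A\subset G$ is $\theta$-split. The whole proof is essentially bookkeeping, and I expect no real obstacle; the only delicate point is confirming that the canonical $\psi$ descends from $E_0\times G$ to the balanced product $E_0\times_{G^\theta}G$, and this is precisely the content of the identity $\theta|_{G^\theta}=\id$.
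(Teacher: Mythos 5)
Your argument is correct and is essentially the same as the paper's: both build the canonical $\theta$-twisted automorphism $\psi$ from the reduction to a $G^\theta$-bundle (the paper defines $\psi$ by $\psi(e)=e$ on the $G^\theta$-subbundle and $\psi(e\cdot g)=e\cdot\theta(g)$, which is your $\psi([e_0,g])=[e_0,\theta(g)]$), and verify the fixed-point identity $f_\psi\,\theta(f_{\varphi_G})\,f_\psi^{-1}=f_{\varphi_G}^{\eps}$ by the same two-line substitution using $\theta(f_\varphi)=f_\varphi^{\eps}$ on the subbundle. The only cosmetic difference is that the paper treats $\eps=\pm1$ in a single unified computation while you handle the two cases separately, and you add a remark on the singularity-type compatibility that the paper leaves implicit.
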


\begin{proof}
Notice first that if $E_G=E\times_{G^\theta} G$ is the extension of a $G^\theta$ bundle $E$, then the map $\psi:E_G\rightarrow E_G$ defined as $\psi(e)=e$ for $e\in E$ and $\psi(e\cdot g)=e\cdot \theta(g)$ is clearly a well-defined $\theta$-twisted automorphism of $E_G$. Now, if for every $e\in E$ we have $\theta(f_\varphi(e))=f_\varphi(e)^\eps$, then
\begin{align*}
	f_{\varphi_G}(e\cdot g)^\eps &= g^{-1} f_\varphi(e)^\eps g = g^{-1} \theta(f_\varphi(e)) g \\ & = g^{-1}\theta(g) \theta(f_\varphi(e\cdot g)) \theta(g)^{-1}g \\ &= f_\psi(e\cdot g) \theta(f_\varphi(e\cdot g)) f_\psi(e\cdot g)^{-1}.
\end{align*} 
And thus $(E_G,\varphi_G)$ is a fixed point of $\iota^a_\eps$.
\end{proof}

The main step towards the description of the fixed points is provided by the following theorem.

\begin{thm}\label{fixedpoints}
If $(E,\varphi)$ is a simple multiplicative $G$-Higgs bundle with $(E,\varphi)\cong \iota^a_\eps(E,\varphi)$, then:
\begin{enumerate}
	\item There exists a unique $[\theta]\in \cl^{-1}(a)$ such that there is a reduction of structure group of $E$ to a $G^\theta$-bundle $E_\theta\subset E$.
	\item If we consider the corresponding $G$-equivariant map $f_\varphi:E|_{X'}\rightarrow G$, then $f_\varphi|_{E^\theta}$ takes values into $G^\theta$ if $\eps=1$, and $S^\theta$ if $\eps=-1$.
\end{enumerate}
More precisely, when $\eps=-1$, $f_\varphi|_{E_\theta}$ takes values in a single orbit $M^\theta_s\subset S^\theta$, for some $s\in S^\theta$ unique up to $\theta$-twisted conjugation.
\end{thm}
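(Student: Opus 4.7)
The plan is to extract the reduction from the twisting automorphism that witnesses the fixed-point condition. I would pick a representative $\theta_0$ of $a$; the hypothesis supplies a $\theta_0$-twisted automorphism $\psi\colon E\to E$ with $\psi\varphi=\varphi^\eps\psi$. The first step is the observation that $\psi^2$ is an ordinary automorphism of $(E,\varphi)$, since
\begin{equation*}
\varphi\psi^2 = \psi\varphi^\eps\psi = \psi^2\varphi^{\eps^2} = \psi^2\varphi.
\end{equation*}
By simplicity, $\psi^2$ must be multiplication by some central element $z\in Z_G$, and writing $\psi(e)=e\cdot f_\psi(e)$ this translates to $f_\psi(e)\theta_0(f_\psi(e))=z$, so $f_\psi$ factors through $S^{\theta_0}_z\subset S_{\theta_0}$.

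The next step is to exploit equivariance: the identity $f_\psi(e\cdot g)=g^{-1}f_\psi(e)\theta_0(g)=g^{-1}*_{\theta_0}f_\psi(e)$ shows that $f_\psi$ is $G$-equivariant under $\theta_0$-twisted conjugation, so it descends to a map $X\to S^{\theta_0}_z/(G,*_{\theta_0})$. Since the target embeds in the clique $\cl^{-1}(a)=S_{\theta_0}/(G\times Z_G)$, which is finite by Richardson's result recalled in the preliminaries, connectedness of $X$ forces the image of $f_\psi$ to lie in a single orbit $G*_{\theta_0}s_0$. I would then set $\theta:=(\theta_0)_{s_0}$; the stabilizer of $s_0$ under $\theta_0$-twisted conjugation is exactly $G^\theta$ (as recorded in Section~\ref{involutions}), so $E_\theta:=f_\psi^{-1}(s_0)$ is a principal $G^\theta$-subbundle of $E$ realizing the reduction in~(1). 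For uniqueness, any other $\psi$ differs from this one by an automorphism of $(E,\varphi)$, which by simplicity is multiplication by some $z'\in Z_G$; this only replaces $s_0$ by $s_0z'$, so the class $[\theta]\in\cl^{-1}(a)$ is unchanged.

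For~(2), I would restrict the intertwining relation $f_\psi(e)\theta_0(f_\varphi(e))f_\psi(e)^{-1}=f_\varphi(e)^\eps$ to $e\in E_\theta$, where $f_\psi(e)=s_0$; it collapses to $\theta(f_\varphi(e))=f_\varphi(e)^\eps$, placing $f_\varphi|_{E_\theta}$ in $G^\theta$ when $\eps=1$ and in $S^\theta$ when $\eps=-1$. In the latter case, $f_\varphi|_{E_\theta}$ is $G^\theta$-equivariant under $\theta$-twisted conjugation, since for $h\in G^\theta$ one has $f_\varphi(e\cdot h)=h^{-1}f_\varphi(e)h=h^{-1}*_\theta f_\varphi(e)$ (using $\theta(h)=h$). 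Composing with the projection $S^\theta\to S^\theta/G=H^1_\theta(\ZZ/2,G)$, which is finite, yields a locally constant map on the connected set $X\setminus|D|$ that must be constant. This places the image of $f_\varphi|_{E_\theta}$ in a single $\theta$-twisted $G$-orbit $M^\theta_s$, with $s$ determined up to $\theta$-twisted conjugation. I expect the principal obstacle to be making the two single-orbit claims rigorous; both reduce to the finiteness of the nonabelian cohomology sets stated in the preliminaries, combined with the connectedness of $X$ and $X\setminus|D|$.
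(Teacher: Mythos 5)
Your argument follows the same route as the paper's: derive from simplicity that $\psi^2$ is central, conclude $f_\psi$ lands in $S^{\theta_0}_z\subset S_{\theta_0}$, use $G$-equivariance for twisted conjugation to descend to a finite target and extract a single orbit, define $E_\theta$ as a fibre of $f_\psi$, and finally restrict the intertwining identity to $E_\theta$ to land in $G^\theta$ (resp.\ $S^\theta$) and run the same finiteness-plus-connectedness argument once more for the refined orbit statement when $\eps=-1$.

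The one imprecise step is the claim that $S^{\theta_0}_z/(G,*_{\theta_0})$ \emph{embeds} in $\cl^{-1}(a)=S_{\theta_0}/(G\times Z_G)$. That map is well defined but need not be injective: two classes in $S^{\theta_0}_z/G$ that differ by a central $z'$ with $z'\theta_0(z')=1$ are identified in $\cl^{-1}(a)$ but may be distinct $G$-orbits. For instance, for $G=\SL_2$ and $\theta_0=\id$, one has $S^{\theta_0}=\{\pm I\}$ and $S^{\theta_0}/G$ has two elements, both mapping to $[\id]\in\cl^{-1}(1)$. What saves the argument is that $S^{\theta_0}_z/G$ is finite on its own: choosing any $s_0$ in the image and setting $\theta_1=\Int_{s_0}\circ\theta_0$, the map $u\mapsto us_0$ induces a $G$-equivariant bijection $S^{\theta_1}\to S^{\theta_0}_z$, so $S^{\theta_0}_z/G\cong S^{\theta_1}/G = H^1_{\theta_1}(\ZZ/2,G)$, which is finite by Richardson. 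With this substitution the descent-and-constancy step is rigorous, and the rest of your proof goes through verbatim; this is also implicitly what the paper does when it chooses the representative $r$ to lie in the image of $f_\psi$.
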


\begin{rmk}
The statement $(1)$ of the above theorem regarding the reduction of the structure group of $E$ was proven in \cite{oscar-ramanan}*{Proposition 3.9}.	
\end{rmk}

\begin{proof}
Let $\theta_0\in \Aut_2(G)$ be any representative of the class $a$. By hypothesis, there exists some $\theta_0$-twisted automorphism $\psi:E\rightarrow E$ making the following diagram commute	
\begin{center}
\begin{tikzcd}
E|_{X'} \ar{r}{\psi} \ar{d}{\varphi} & E|_{X'} \ar{d}{\varphi^\eps} \\
E|_{X'} \ar{r}{\psi} & E|_{X'}.
\end{tikzcd}
\end{center}
Here, $X'=X\setminus |D|$ denotes the complement of the singularity divisor of  $\varphi$.
Now, independently of the value of $\eps$, we have that the following diagram commutes
\begin{center}
\begin{tikzcd}
E|_{X'} \ar{r}{\psi^2} \ar{d}{\varphi} & E|_{X'} \ar{d}{\varphi} \\
E|_{X'} \ar{r}{\psi^2} & E|_{X'},
\end{tikzcd}
\end{center}
and $\psi^2(e\cdot g)=\psi(\psi(e)\cdot \theta_0(g))=\psi^2(e)\cdot g$, so $\psi^2$ is an automorphism of $(E,\varphi)$. Since by assumption $(E,\varphi)$ is simple, there exists some $z\in Z_G$ such that $\psi^2(e)=e\cdot z$ for every $e\in E$. Therefore,
\begin{equation*}
e\cdot z=\psi^2(e)=\psi(e\cdot f_\psi(e))=\psi(e)\cdot \theta_0(f_\psi(e))=e\cdot [f_\psi(e)\theta_0(f_\psi(e))].
\end{equation*} 
We conclude that $f_\psi$ maps $E$ into $S_{\theta_0}$.

Note that $f_\psi$ is $G$-equivariant for the $\theta_0$-twisted conjugation action. Therefore, it descends to a morphism $X=E/G\rightarrow S_{\theta_0}/(G\times Z_G)\cong \cl^{-1}(a)$. Now, since $\cl^{-1}(a)$ is finite and $X$ is irreducible, this maps $X$ to a single element $[\theta]\cong \cl^{-1}(a)$. Let us take $r\in S_{\theta_0}$ a representative of the class corresponding to $[\theta]$. In other words, we take $r$ such that $\theta=\Int_r \circ \theta_0$. 

Let us consider the subset $E_\theta=f_{\psi}^{-1}(r)\subset E$. If $e\in E_\theta$, then another element $e\cdot g$ in the same fibre of $E$ belongs to $E_\theta$ if and only if
\begin{equation*}
r=f_\psi(e\cdot g) = g^{-1}*_{\theta_0}f_\psi(e)=g^{-1}r\theta_0(g).
\end{equation*} 
Thus, $e\cdot g\in E_\theta$ if and only if $g=\Int_r\circ \theta_0(g)=\theta(g)$. That is, if $g\in G^\theta$. Therefore,  $E_\theta$ defines a principal $G^\theta$-bundle to which $E$ reduces. This proves $(1)$.

As we explained above, if $\psi$ determines the isomorphism between $(E,\varphi)$ and $(\theta_0(E),\theta_0(\varphi)^\eps)$, then for every $e\in E$ we must have
\begin{equation*}
f_\psi(e)\theta_0(f_\varphi(e))f_\psi(e)^{-1}=f_{\varphi}(e)^\eps.
\end{equation*} 
Thus, if $e\in E_\theta=f_\psi^{-1}(r)$, we have
\begin{equation*}
\theta(f_{\varphi}(e))=r\theta_0(f_\varphi(e))r^{-1}=f_\psi(e)\theta_0(f_\varphi(e))f_\psi(e)^{-1}=f_{\varphi}(e)^\eps,
\end{equation*} 
proving $(2)$.

Suppose now that $\eps=-1$. In that case we have a map $f_\varphi|_{E_\theta}:E_\theta|_{X'}\rightarrow S^\theta$. Since this map is $G^\theta$-equivariant, we can quotient by $G^\theta$ and obtain a map from $X'=E_{\theta}|_{X'}/G^\theta$ to the quotient of $S^\theta$ by the conjugation action of $G^\theta$. We can further quotient $S^\theta$ by the $\theta$-twisted conjugation action and obtain a well defined map $X'\rightarrow S^\theta/G$. Again, since $X'$ is irreducible and $S^\theta/G$ is finite, we conclude that $f_\varphi$ maps $E_\theta|_{X'}$ to a single $\theta$-twisted orbit $M_s^\theta$, for some element $s\in S^\theta$.
\end{proof}

\begin{rmk} 
	Notice that if there exists some $e\in E$ such that $f_\varphi(e)$ is semisimple or unipotent, then $\theta$ can be chosen so that $f_\varphi|_{E_\theta}$ takes values in the symmetric variety $M^\theta$. Indeed, in that case we can choose $r$ to be $r=f_\psi(e)$, so that $e\in E_\theta$, for $\theta=\Int_r\circ \theta_0$. We know from the above that $s=f_{\varphi}(e)\in S^\theta$ and that $f_\varphi$ maps $E_\theta|_{X'}$ to the $\theta$-twisted orbit $M_s^\theta$. However, if $s$ is semisimple or unipotent, we have by Richardson \cite{richardson}*{Lemmas 6.1-6.3} that in fact $s\in M^\theta$ and thus $M^\theta_s=M^\theta$.
\end{rmk}

When $\theta$ and $\theta'=\Int_g\circ \theta \circ \Int_g^{-1}$ are two involutions of $G$ related by the equivalence relation $\sim$, we have $\widetilde{\Mm}_{\bm{D},\bm{\lambda}}(G^\theta)=\widetilde{\Mm}_{\bm{D},\bm{\lambda}}(G^{\theta'})$ and $\Mm_{\bm{D},\bm{\lambda}}(G,\theta)=\Mm_{\bm{D},\bm{\lambda}}(G,\theta')$. Therefore, the above theorem allows us to decompose $\Mm_{\bm{D},\bm{\lambda}}(G)^{\iota^a_+}$ in the components $\widetilde{\Mm}_{\bm{D},\bm{\lambda}}(G^\theta)$, for $\theta$ the elements of the clique $\cl^{-1}(a)$. That is, we get
\begin{equation*}
\Mm_{\bm{D},\bm{\lambda}}(G)^{\iota_a^+}=\bigcup_{[\theta]\in \cl^{-1}(a)}\widetilde{\Mm}_{\bm{D},\bm{\lambda}}(G^\theta)	.
\end{equation*} 

The situation for $\eps=-1$ is a little bit more involved, as we explain now.

\begin{defn}
Let $(G,\theta,s)$ be a triple consisting of a reductive group $G$, an involution $\theta\in \Aut_2(G)$ and an element $s\in S^\theta$. A \emph{multiplicative $(G,\theta,s)$-Higgs pair} $(E,\varphi)$ on $X$ is a pair consisting of a principal $G^\theta$-bundle $E\rightarrow X$, and a section $\varphi$ of the bundle $E(M^\theta_s)$ associated to the conjugation action of $G$ on $M^\theta_s$, and defined over the complement $X'$ of a finite subset of $X$.
\end{defn}

\begin{rmk}
	Note that this is a new kind of object, different from the others defined in this paper. Indeed these are not multiplicative $(G,\theta)$-Higgs bundles since the bundle $E$ is a $G^\theta$-bundle but the variety $M^\theta_s$, although it is a symmetric variety, is equal to $M^\theta_s=M^{\theta_s}s\cong G/G^{\theta_s}$, so its stabilizer is not equal to $G^\theta$.
\end{rmk}

To a multiplicative $(G,\theta,s)$-Higgs pair we can associate an invariant $\inv(\varphi)$, which is a divisor with values in the quotient $M^\theta_s(F)/G(\Oo)$. Now, since $M^\theta_s=M^{\theta_s}s$, this quotient can in fact be identified with the semigroup $\XX_*(A_{G^{\theta_s}})_-$, and the invariant $\inv(\varphi)$ is a $\XX_*(A_{G^{\theta_s}})_-$-valued divisor. Here, $A$ is a maximal $\theta_s$-split torus.

We denote by $\MM_{\bm{d},\bm{\lambda}}(G,\theta,s)_{\bm{D}}$ the moduli stack of multiplicative $(G,\theta,s)$-Higgs pairs with singularity type $(\bm{D},\bm{\lambda})$, for $\bm{D}\in X_{\bm{d}}$, where  $\bm{\lambda}$ is a tuple of anti-dominant cocharacters in $\XX_*(A_{G^{\theta_s}})_-$.  As in the case of multiplicative $(G,\theta)$-Higgs bundles, there is a natural map
\begin{equation*}
\MM_{\bm{d},\bm{\lambda}}(G,\theta,s)_{\bm{D}} \longrightarrow \MM_{\bm{d},w_0\bm{\lambda}}(G)_{\bm{D}}.
\end{equation*} 
We denote by $\Mm_{\bm{D},\bm{\lambda}}(G,\theta,s)\subset \Mm_{\bm{D},w_0\bm{\lambda}}(G)$ the intersection of the image of this map with $\Mm_{\bm{D},w_0\bm{\lambda}}(G)$.

In this language, we can write the consequences of the results of this section as follows.

\begin{corol} \label{fixedpoints2}
Let $\theta_q$ be the quasisplit involution representing the class $a\in \Out_2(G)$ and let $\bm{\lambda}$ be a tuple of anti-dominant cocharacters $\lambda_i\in \XX_*(A_{G^{\theta_q}})_+$, for $A$ a maximal $\theta_q$-split torus. Then,
\begin{equation*}
\Mm_{\bm{D},w_0\bm{\lambda}}(G)^{\iota_a^-} = \bigcup_{[\theta]\in \cl^{-1}(a)} \bigcup_{[s]\in (S^\theta/G)_{\bm{\lambda}}} \Mm_{\bm{D},\bm{\lambda}}(G,\theta,s).
\end{equation*} 
Here, $(S^\theta/G)_{\bm{\lambda}}$ denotes the set
\begin{equation*}
(S^\theta/G)_{\bm{\lambda}} = \left\{[s]\in S^\theta/G: u_{\theta_s}(\bm{\lambda})=\bm{\lambda}\right\},
\end{equation*} 
where $\theta_s=\Int_s \circ \theta$ and $u_{\theta_s}$ is the inner automorphism such that $\theta_q=u_{\theta_s}\circ \theta_s$.
\end{corol}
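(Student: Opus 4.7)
The result is essentially a combination of Theorem \ref{fixedpoints} together with an extension of Proposition \ref{inclusion}(2) to the multiplicative $(G,\theta,s)$-Higgs pair setting, plus a careful matching of singularity types through the extension map. I would split the proof into three steps.

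First, for the inclusion $\supseteq$, I would extend Proposition \ref{inclusion}(2) to $(G,\theta,s)$-pairs. Given such a pair $(E,\varphi)$ with $E$ a $G^\theta$-bundle and $\varphi$ taking values in $M^\theta_s$, form the extended $G$-bundle $E_G=E\times_{G^\theta}G$ and define the $\theta$-twisted automorphism $\psi(e\cdot g)=e\cdot\theta(g)$ for $e\in E$, so that $f_\psi\equiv 1$ on $E$. The key observation is that $M^\theta_s\subset S^\theta$: indeed, for any $gs\theta(g)^{-1}\in M^\theta_s$, a direct calculation using $\theta(s)=s^{-1}$ gives $\theta(gs\theta(g)^{-1})=(gs\theta(g)^{-1})^{-1}$. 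Consequently $\theta(f_\varphi(e))=f_\varphi(e)^{-1}$ for every $e\in E$, which is exactly the commutativity required by the $\iota^a_-$-fixed-point diagram.

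Second, for the converse inclusion $\subseteq$, I would invoke Theorem \ref{fixedpoints} directly. A simple multiplicative $G$-Higgs bundle $(E,\varphi)$ fixed by $\iota^a_-$ yields a unique class $[\theta]\in\cl^{-1}(a)$ and a reduction $E_\theta\subset E$ of structure group to $G^\theta$, and a unique class $[s]\in S^\theta/G$ such that $f_\varphi|_{E_\theta}$ factors through $M^\theta_s$. By construction this data is precisely that of a multiplicative $(G,\theta,s)$-Higgs pair inducing $(E,\varphi)$ under extension of the structure group.

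Third, I would match the singularity types. By the lemma preceding the proof of Theorem \ref{fixedpoints}, the extension map sends an invariant $\mu\in\XX_*(A_{G^{\theta_s}})_-$ at a singular point to $w_0\mu\in\XX_*(T)_+$ on the $G$-side. Hence the fixed-point locus inside $\Mm_{\bm{D},w_0\bm{\lambda}}(G)$ captures exactly the $(G,\theta,s)$-Higgs pairs whose invariant at each $x_i$ equals the corresponding $\lambda_i\in\XX_*(A_{G^{\theta_s}})_-$. Translating through the canonical identification of maximal tori discussed in the ``fixed dominant cocharacters" subsection, requiring $\bm{\lambda}\in\XX_*(A_{G^{\theta_q}})_-$ to lift to $\XX_*(A_{G^{\theta_s}})_-$ is equivalent to the condition $u_{\theta_s}(\bm{\lambda})=\bm{\lambda}$, i.e., $[s]\in(S^\theta/G)_{\bm{\lambda}}$.

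The main obstacle is this third step: one must carefully track how the singularity invariants transform under the conjugation relating the $\theta_s$-split torus and the quasisplit $\theta_q$-split torus, and verify that the $u_{\theta_s}$-fixedness condition precisely characterises non-emptiness of $\Mm_{\bm{D},\bm{\lambda}}(G,\theta,s)$ inside $\Mm_{\bm{D},w_0\bm{\lambda}}(G)$. The first two steps are essentially formal once Theorem \ref{fixedpoints} and the inclusion $M^\theta_s\subset S^\theta$ are in hand.
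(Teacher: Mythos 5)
Your proposal is correct and follows essentially the same route as the paper: the two inclusions come directly from Proposition \ref{inclusion}(2) (noting $M^\theta_s\subset S^\theta$) and Theorem \ref{fixedpoints}, and the only content of the corollary's own proof is the justification of the $(S^\theta/G)_{\bm{\lambda}}$ condition, which you do identify. One small remark: you describe Step~1 as an ``extension'' of Proposition \ref{inclusion}(2), but once you observe that $M^\theta_s\subset S^\theta$ (which you verify correctly from $\theta(s)=s^{-1}$), it is a direct application rather than an extension, since that proposition already treats sections valued in $S^\theta$ with the conjugation action of $G^\theta$. Your Step~3, which you flag as the main obstacle, is in fact exactly what the paper records as its entire proof: the invariant of a $(G,\theta,s)$-pair lives in $\XX_*(A^s_{G^{\theta_s}})_-$, so $\bm{\lambda}$ given in $\XX_*(A_{G^{\theta_q}})$ is an admissible singularity type precisely when each $\lambda_i$ is in the image of the canonical map from $\XX_*(A^s_{G^{\theta_s}})$, and by the discussion in the ``fixed dominant cocharacters'' subsection that image is characterised by $u_{\theta_s}(\lambda_i)=\lambda_i$, i.e.\ $\theta_s\in a_{\lambda_i}$.
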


\begin{proof}
Indeed, note that $\bm{\lambda}$ can only be a valid invariant for a multiplicative $(G,\theta,s)$-Higgs pair if and only if the $\lambda_i$ are in the image of $\XX_*(A^s_{G^{\theta_s}})$, for $A^s$ a maximal $\theta_s$-split torus, thus, if and only if $\theta_s\in a_{\lambda_i}$ for every $i=1,\dots,n$.
\end{proof}

\subsection{Fixed points, submanifolds and the symplectic structure}
In this section we consider the particular case where $X$ is a Calabi--Yau curve, that is, the canonical line bundle of $X$ is the trivial line bundle $K_X=\Oo_X$. A Calabi--Yau curve must be either $\Aa^1$, $\GG_m$, or an elliptic curve.

Hurtubise and Markman \cite{hurtubise-markman} showed that, assuming that $X$ is a Calabi--Yau curve, the moduli space $\Mm_{\bm{D},\bm{\lambda}}(G)$ admits an algebraic symplectic structure $\Omega$.

\begin{thm}\label{symplectic}
For any $a\in \Out_2(G)$, 
\begin{equation*}
	(\iota_{a}^\eps)^* \Omega = \eps \Omega.
\end{equation*} 
Therefore, $\Mm_{\bm{D},\bm{\lambda}}(G)^{\iota_a^+}$ is an algebraic symplectic submanifold, while $\Mm_{\bm{D},\bm{\lambda}}(G)^{\iota_a^-}$ is an algebraic Lagrangian submanifold of $\Mm_{\bm{D},\bm{\lambda}}(G)$.
\end{thm}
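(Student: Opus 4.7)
The plan is to verify the transformation law $(\iota_a^\eps)^* \Omega = \eps \Omega$ by a direct infinitesimal computation and then invoke a standard linear-algebra fact about involutions of symplectic manifolds. To begin I would fix a representative $\theta \in \Aut_2(G)$ of the outer class $a$, together with an invariant bilinear form $\kappa$ on $\g$ which is simultaneously Weyl- and $\theta$-invariant; such a $\kappa$ exists for any involution of a reductive Lie algebra (average the Killing form over $\langle \theta \rangle$ on the semisimple part and pick any $\theta$-invariant non-degenerate form on the centre). Since $\Int(G)$ acts trivially on the moduli space, the transformation law will not depend on the chosen representative of $a$.

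I would next unpack the Hurtubise--Markman construction of $\Omega$, which the authors will have just reviewed. At a simple multiplicative $G$-Higgs bundle $[E,\varphi]$, a tangent vector is represented by a pair $(\dot A, \eta)$, where $\dot A$ is an infinitesimal deformation of $E$ and $\eta$ is a meromorphic section of $E(\g)$ encoding a variation of $\varphi$ via right-trivialisation $\dot\varphi = \eta \cdot \varphi$. The form $\Omega$ is then a finite sum of local residues at the points of $|D|$, paired through $\kappa$ and the fixed trivialising section $\omega \in H^0(X, K_X)$; crucially, it is skew in the cross sense, coupling $\dot A$ of one argument with $\eta$ of the other, in the spirit of the canonical symplectic form on a cotangent bundle. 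Under $\iota_a^+$ both slots transform by $d\theta$, and the $\theta$-invariance of $\kappa$ gives $(\iota_a^+)^* \Omega = \Omega$ immediately. Under $\iota_a^-$ the additional inversion $\varphi \mapsto \varphi^{-1}$ contributes a further sign: a short computation with $\varphi_t = \exp(t\eta)\varphi$ shows that in right-trivialisation the variation of $\varphi^{-1}$ becomes $-\Ad(\varphi^{-1})\eta$. Hence the full tangent-space action is $(\dot A, \eta) \mapsto (d\theta\, \dot A,\, -d\theta\, \Ad(\varphi^{-1})\eta)$, and substituting into the residue formula and using $\Ad$-invariance of $\kappa$ to absorb $\Ad(\varphi^{-1})$, the only surviving effect is the overall minus sign on $\eta$. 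Because the pairing is linear (not bilinear) in each of $\dot A$ and $\eta$ separately, that single sign flip yields $(\iota_a^-)^* \Omega = -\Omega$.

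The geometric conclusion is then standard. At a fixed point of a smooth involution $\iota$ of $(M, \Omega)$ with $\iota^* \Omega = \eps \Omega$, the tangent space splits as $T = T_+ \oplus T_-$ into the $\pm 1$-eigenspaces of $d\iota$, and $T_+$ is the tangent space to the fixed locus. For $v \in T_+, w \in T_-$ one computes $\Omega(v,w) = \Omega(d\iota v, d\iota w) = -\eps\, \Omega(v,w)$; when $\eps = +1$ this makes $T_+$ and $T_-$ symplectically orthogonal, so non-degeneracy of $\Omega$ forces both restrictions to be symplectic, giving a symplectic submanifold. When $\eps = -1$, the analogous identity for $v,w \in T_+$ reads $\Omega(v,w) = -\Omega(v,w)$, so $T_+$ (and likewise $T_-$) is isotropic, $\Omega$ pairs $T_+$ with $T_-$ perfectly, forcing $\dim T_+ = \tfrac12 \dim T$, and hence $T_+$ is Lagrangian.

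The main obstacle is the sign bookkeeping in the second paragraph. The precise Hurtubise--Markman pairing depends on conventions (left- vs right-trivialisation of $\dot\varphi$, placement of $\Ad(\varphi^{\pm 1})$ factors, and the sign of the two summands in the cross pairing), and whether the minus sign from inversion propagates to $-\Omega$ rather than being squared away to $+\Omega$ depends on the pairing being genuinely cross rather than diagonal. Once a consistent convention is fixed, the invariance properties of $\kappa$ handle the $\theta$ action automatically and the critical task reduces to tracking the single overall sign through the residue formula.
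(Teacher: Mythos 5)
Your proposal is correct and takes essentially the same route as the paper: both compute the differential of $\iota_a^\eps$ on a deformation pair, observe that $\theta$ acts on both slots while the sign $\eps$ hits only the Higgs-field slot, and then conclude $(\iota_a^\eps)^*\Omega=\eps\Omega$ from the cross structure of the Hurtubise--Markman pairing together with $\theta$-invariance of the chosen invariant form on $\g$. The paper phrases this in the \v{C}ech-hypercohomology model (finding $(\iota_a^\eps)_*(\bm s,\bm t)=(\theta(\bm s),\eps\theta(\bm t))$ and substituting into $\Phi(\cdot,\Psi\cdot)$), while you work with $(\dot A,\eta)$ in a trivialisation and absorb the $\Ad(\varphi^{-1})$ factor via invariance of $\kappa$; you also spell out the standard eigenspace argument for the symplectic/Lagrangian conclusion, which the paper leaves implicit.
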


We recall first some details about the deformation theory of multiplicative Higgs bundles and the construction of the Hurtubise--Markman symplectic structure $\Omega$ on $\Mm_{\bm{D},\bm{\lambda}}(G)$. We start by describing the tangent bundle of $\Mm_{\bm{D},\bm{\lambda}}(G)$. Using standard arguments in deformation theory (see, for example, \cite{biswas-ramanan}), Hurtubise and Markman showed that the Zariski tangent space of $\Mm_{\bm{D},\bm{\lambda}}(G)$ at a point $[E,\varphi]$ is equal to $\HH^1(C_{[E,\varphi]})$ the first hypercohomology space of the deformation complex
\begin{center}
\begin{tikzcd}
	C_{[E,\varphi]}: & E(\g) \rar{\ad_\varphi} & \ad(E,\varphi).
\end{tikzcd}
\end{center}
Here, $\ad(E,\varphi)$ is a vector bundle on $X$ defined by the following short exact sequence 
\begin{center}
\begin{tikzcd}
	0 \rar & \left\{(a,b): a + \Ad_\varphi(b) = 0\right\} \rar[hook] & E(\g)\oplus E(\g) \rar & \ad(E,\varphi) \rar & 0,
\end{tikzcd}
\end{center}
where, if we regard $b\in E(\g)$ and $\varphi\in E(G)$, as maps $f_b:E\rightarrow \g$, $f_\varphi:E\rightarrow G$, by $\Ad_{\varphi}(b)$ we mean the map sending any $e\in E$ to $\Ad_{f_\varphi(e)}(f_b(e))$. The map $\ad_\varphi$ is defined as
\begin{equation*}
\ad_\varphi= L_\varphi - R_\varphi,
\end{equation*} 
where $L_\varphi$ and $R_\varphi$ are given by the following diagram
\begin{center}
\begin{tikzcd}
	E(\g) \dar{i_1} \arrow{rd}{L_\varphi} & \\
	E(\g)\oplus E(\g) \rar & \ad(E,\varphi) \\
	E(\g). \uar{i_2} \arrow{ru}{R_\varphi} & 
\end{tikzcd}
\end{center}
\begin{rmk}
In terms of a faithful representation $\rho:G\hookrightarrow \GL(V)$, the maps $L_\varphi$	and $R_\varphi$ correspond precisely to left and right multiplication by  $\rho(\varphi)$, and the deformation complex becomes
\begin{align*}
\ad_{\rho(\varphi)}: \End E(V_\rho) & \longrightarrow \End E(V_\rho) \otimes \Oo_X((\bm{\lambda}\cdot \bm{D})_\rho) \\
\psi & \longmapsto [\rho(\varphi),\psi].
\end{align*} 
This is precisely the deformation complex for $\Oo_X((\bm{\lambda}\cdot \bm{D})_\rho)$-twisted Higgs bundles.
\end{rmk}

We now have to give a explicit description for the tangent space $\HH^1(C_{[E,\varphi]})$. We do this by taking an acyclic resolution of the complex $C_{[E,\varphi]}$ and computing cohomology of the total complex. For example, we can take the \v{C}ech resolution associated to an acyclic étale cover of $X$, that we denote by $\left\{U_i\right\}_{i}$. We can now compute $\HH^1(C_{[E,\varphi]})$ as the quotient $Z/B$, where $Z$ consists of pairs $(\bm{s},\bm{t})$, with $\bm{s}=(s_{ij})_{i,j}$, $\bm{t}=(t_i)_i$, for the $s_{ij}\in \Gamma(U_i\cap U_j, E(\g))$, and $t_i \in \Gamma(U_i,\ad(E,\varphi))$, satisfying the equations
\begin{equation*}
\begin{cases}
s_{ij}+s_{jk}=s_{ij}, \\
t_i-t_j = \ad_\varphi(s_{ij});
\end{cases}
\end{equation*} 
and $B$ is the set of pairs $(\bm{s},\bm{t})$ of the form $\bm{s}=(r_i-r_j)_{i,j}$ and $\bm{t}=(\ad_\varphi(r_i))_i$, for some $\bm{r}=(r_i)_i$, with $r_i\in \Gamma(U_i,E(\g))$. Now one obtains a deformation of the pair $(E,\varphi)$ from a pair $(\bm{s},\bm{t})$ by considering the pair $(E,\varphi)_{(\bm{s},\bm{t})}$ over $X\times \Spec(k[\delta])$, for $k[\delta]=k[t]/(t^2)$, determined by 
\begin{equation*}
	\begin{cases}
\bm{g}^{\bm{s}} = \bm{g}(1 + \delta \bm{s}), \\
\bm{\phi}^{\bm{t}} = \bm{\phi}(1 + \delta \bm{t}).
	\end{cases}
\end{equation*} 
Here, $\bm{g}=(g_{ij})_{i,j}$ are the transition functions of $E$ and $\bm{\phi}=(\phi_i)_{i}$ is determined by restricting $\varphi$ to the $U_i$.

With this explicit description of the tangent space we can finally compute the differential of the involution $\iota_a^\eps$. Indeed, $(\iota^\eps_a)_*(\bm{s},\bm{t})$ is such that
\begin{equation*}
\iota_a^\eps(\bm{g}^{\bm{s}},\bm\phi^{\bm{t}}) = (\theta(\bm{g})^{(\iota^\eps_a)_*(\bm{s})}, (\theta(\bm{\phi})^{\eps})^{(\iota^\eps_a)_*(\bm{t})}).
\end{equation*} 
Now, 
\begin{equation*}
\iota_a^\eps(\bm{g}^{\bm{s}},\bm\phi^{\bm{t}}) = (\theta(\bm{g})(1+\delta \theta(\bm{s})),\theta(\bm{\phi})^\eps(1+\delta \theta(\bm{t}))^\eps)=(\theta(\bm{g})^{\theta(\bm{s})},(\theta(\bm{\phi})^{\eps})^{\eps \theta(\bm{t})}),
\end{equation*} 
where, for the last equality, we have used that, in $k[\delta]$,
\begin{equation*}
	(1+a\delta )^\eps=1+\eps a\delta.
\end{equation*} 
We conclude that
\begin{equation*}
	(\iota_a^\eps)_*(\bm{s},\bm{t})=(\theta(\bm{s}),\eps\theta(\bm{t})).
\end{equation*} 

Consider now the dual complex of the deformation complex
\begin{center}
\begin{tikzcd}
	C^*_{[E,\varphi]}: & \ad(E,\varphi)^* \rar{\ad_\varphi^*} & E(\g^*).
\end{tikzcd}
\end{center}
Grothendieck--Serre duality gives a perfect pairing
\begin{align*}
\HH^1(C_{[E,\varphi]}) \times \HH^1(C^*_{[E,\varphi]}\otimes K_X)& \longrightarrow H^1(X,K_X) \cong k. 
\end{align*} 
However, since we assumed that $K_X=\Oo_X$, we get a perfect pairing between $\HH^1(C_{[E,\varphi]})$ and $\HH^1(C^*_{[E,\varphi]})$, so we can identify the cotangent space of the moduli space at $[E,\varphi]$ with $\HH^1(C^*_{[E,\varphi]})$. 

An invariant bilinear form on $\g$ gives a natural isomorphism between $\g$ and its dual $\g^*$. Under this isomorphism, the complex $C^*_{[E,\varphi]}$ can be identified with
\begin{center}
\begin{tikzcd}
	C^*_{[E,\varphi]}: & \ad(E,\varphi^{-1}) \rar{-\ad_\varphi} & E(\g).
\end{tikzcd}
\end{center}
We can then describe the cotangent space $\HH^1(C^*_{[E,\varphi]})$ as the quotient $Z^*/B^*$, where $Z^*$ consists of pairs $(\bm{\sigma},\bm{\tau})$, with $\bm{\sigma}=(\sigma_{ij})_{i,j}$, $\bm{\tau}=(\tau_{i})_i$, $\sigma_{ij}\in \Gamma(U_i\cap U_j,\ad(E,\varphi^{-1}))$, $\tau_i\in \Gamma(U_i,E(\g))$, satisfying
\begin{equation*}
\begin{cases}
\sigma_{ij}+\sigma_{jk}=\sigma_{ij}, \\
\tau_i-\tau_j = -\ad_\varphi(\sigma_{ij});
\end{cases}
\end{equation*} 
and $B^*$ is the set of pairs $(\bm{\sigma},\bm{\tau})$ of the form $\bm{\sigma}=(\eta_i-\eta_j)_{i,j}$ and $\bm{\tau}=(-\ad_\varphi(\eta_i))_i$, for some $\bm{\eta}=(\eta_i)_i$, with $\eta_i\in \Gamma(U_i,\ad(E,\varphi^{-1}))$. One can now check that, under this description, the Grothendieck--Serre duality pairing is given explicitly by
\begin{align*}
\Phi:\HH^1(C_{[E,\varphi]}) \times \HH^1(C^*_{[E,\varphi]}\otimes K_X) & \longrightarrow H^1(X,K_X) \cong k \\
([\bm{s},\bm{t}],[\bm{\sigma},\bm{\tau}]) & \longmapsto  \Phi((\bm{s},\bm{t}),(\bm{\sigma},\bm{\tau}))= \langle \bm{s},\bm{\tau} \rangle- \langle \bm{\sigma},\bm{t}\rangle,
\end{align*} 
where $\langle-,-\rangle$ denotes the duality pairing.

Finally, note that we can define a morphism of complexes $\Psi:C_{[E,\varphi]} \rightarrow C^*_{[E,\varphi]}$, as in the following diagram
\begin{center}
\begin{tikzcd}
	C_{[E,\varphi]} \dar{\Psi} : &	E(\g) \rar{\ad_\varphi} \dar{-L_{\varphi^{-1}}} & \ad(E,\varphi) \dar{L_{\varphi^{-1}}} \\
	C_{[E,\varphi]}^*: &	\ad(E,\varphi^{-1}) \rar{-\ad_\varphi} & E(\g).
\end{tikzcd}
\end{center}
The adjoint of this morphism is given by
\begin{center}
\begin{tikzcd}
		C_{[E,\varphi]} \dar{\Psi^\dagger} : & E(\g) \rar{\ad_\varphi} \dar{-R_{\varphi^{-1}}} & \ad(E,\varphi) \dar{R_{\varphi^{-1}}} \\
	C_{[E,\varphi]}^*: & \ad(E,\varphi^{-1}) \rar{-\ad_\varphi} & E(\g).
\end{tikzcd}
\end{center}
Now, these two maps are homotopic, since the following diagram commutes
\begin{center}
\begin{tikzcd}
	E(\g) \rar{\ad_\varphi} \dar{-\ad_{\varphi^{-1}}} & \ad(E,\varphi) \dar{\ad_{\varphi^{-1}}} \ar{ld}{h} \\
	\ad(E,\varphi^{-1}) \rar{-\ad_\varphi} & E(\g),
\end{tikzcd}
\end{center}
for $h=L_{\varphi^{-1}} \circ R_{\varphi^{-1}}$. Therefore, $\Psi$ and $\Psi^\dagger$ define the same map in hypercohomology. Moreover, this map is an isomorphism. We can now define the \emph{Hurtubise--Markman symplectic form} as
\begin{align*}
\Omega: \HH^1(C_{[E,\varphi]}) \times \HH^1(C_{[E,\varphi]}) & \longrightarrow k \\
(\bm{v},\bm{w}) & \longmapsto \Phi(\bm{v}, \Psi(\bm{w})).
\end{align*} 
This is obviously non-degenerate and it is clearly a $2$-form since
\begin{align*}
	\Omega((\bm{s},\bm{t}),(\bm{s}',\bm{t}')) &= \langle \bm s, \Psi(\bm t')\rangle- \langle \Psi(\bm s'),\bm t\rangle = \langle \Psi^\dagger(\bm s), \bm t'\rangle- \langle \bm s',\Psi^\dagger(\bm t)\rangle\\ &= \langle \Psi(\bm s), \bm t'\rangle- \langle \bm s',\Psi(\bm t)\rangle=-\Omega((\bm s',\bm t'),(\bm{s},\bm t)).
\end{align*} 
Hurtubise and Markman \cite{hurtubise-markman}*{\S 5} show that it is closed.

We now have all the ingredients for the proof of Theorem \ref{symplectic}.

\begin{proof}[Proof of Theorem \ref{symplectic}]
	\begin{align*}
		(\iota_{a}^\eps)^* \Omega((\bm{s},\bm{t}),(\bm{s}',\bm{t}')) &= \Omega((\theta(\bm{s}),\eps \theta(\bm{t})),(\theta(\bm{s}'),\eps \theta(\bm{t}'))) \\
									     &= \langle \theta(\bm s),\eps \Psi^\theta(\theta(\bm t'))\rangle- \langle \Psi^\theta(\theta(\bm s')),\eps \theta(\bm t)\rangle \\
									     &= \langle\theta(\bm s),\eps \theta(\Psi(\bm t'))\rangle- \langle\theta(\Psi(\bm s')),\eps \theta(\bm t)\rangle \\
									     &= \eps [\langle\bm s, \Psi(\bm t')\rangle- \langle\Psi(\bm s'),\bm t\rangle ] \\
									     &= \eps \Omega((\bm{s},\bm{t}),(\bm{s}',\bm{t}')),
	\end{align*} 
since the bilinear form can be taken to be invariant under automorphisms of $\g$.
\end{proof}

\appendix
\section{Root systems} \label{appendix}
We recall the basics of root systems.

Let $(\EE,(\cdot,\cdot))$ be a finite-dimensional Euclidean vector space. Recall that a (crystallographic) \emph{root system} in $\EE$ is a finite subset $\Phi\subset \EE$, whose elements are called \emph{roots}, such that:
\begin{enumerate}
	\item The roots span $\EE$.
	\item For any two roots $\alpha, \beta$, the vector $s_\alpha(\beta)=\beta-2\frac{(\alpha,\beta)}{(\alpha,\alpha)}\alpha$ is in $\Phi$.
	\item For any two roots $\alpha,\beta$, the number $2\frac{(\alpha,\beta)}{(\alpha,\alpha)}$ is an integer.
\end{enumerate}
We say that a root system is \emph{reduced} if the only scalar multiples of a root $\alpha$ that belong to $\Phi$ are $\alpha$ and $-\alpha$. In any case, note that even if $\Phi$ is nonreduced, the only multiples of a root $\alpha$ that can belong to $\Phi$ are $\pm \alpha$, $\pm \tfrac{1}{2} \alpha$ or $\pm 2 \alpha$, since
 \begin{equation*}
2\frac{(\alpha,\lambda \alpha)}{(\alpha, \alpha)}= 2\lambda \ \ \text{ and }\ \ 2\frac{(\alpha,\lambda \alpha)}{(\lambda \alpha, \lambda \alpha)}= 2/\lambda
\end{equation*} 
must be integers. 

Given a root system $\Phi$, one can fix a subset $\Phi^+$ of \emph{positive roots} which is closed under the sum and such that for each $\alpha \in \Phi$ either $\alpha$ or $-\alpha$ belong to $\Phi^+$ (but not both). When $\Phi$ is reduced, the indecomposable elements of $\Phi^+$ form the set of \emph{simple roots} $\Delta$, and every root $\alpha\in \Phi$ can be written as a linear combination of elements in $\Delta$ with integer coefficients. Fixing a set of simple roots $\Delta$ is equivalent to fixing the positive roots $\Phi^+$. This choice is unique up to the action of the \emph{Weyl group} of $\Phi$, which is the finite group $W$ generated by the reflections $s_\alpha$. This group naturally acts on $\EE$, and its fundamental domains are the \emph{Weyl chambers}, which are the connected components of the complement of the union of the hyperplanes perpendicular to each root $\alpha \in \Phi$. Given a choice of simple roots $\Delta$, the corresponding \emph{dominant Weyl chamber} is the one defined as 
\begin{equation*}
C^+_\Delta = \left\{v\in \EE: (v,\alpha)\geq 0, \forall \alpha \in \Delta \right\}.
\end{equation*} 
One can also consider the \emph{anti-dominant Weyl chamber}
\begin{equation*}
C^-_\Delta = \left\{v\in \EE: (v,\alpha)\leq 0, \forall \alpha \in \Delta \right\},
\end{equation*} 
which is related to $C^+_\Delta$ by the \emph{longest element} of $W$, which is the element $w_0$ of maximal length as a word in the $s_\alpha$, for $\alpha \in \Delta$.

To any root system $\Phi$ in $(\EE,(\cdot,\cdot))$, one can associate its \emph{dual root system}
\begin{equation*}
\Phi^\vee = \left\{\alpha^\vee\in \EE^*: \alpha \in \Phi\right\},
\end{equation*} 
where $\alpha^\vee$ is the \emph{coroot} of $\alpha$, defined as
\begin{equation*}
\langle v,\alpha^\vee \rangle =2\frac{(v,\alpha)}{(\alpha,\alpha)}
\end{equation*} 
for any $v\in \EE$. Here,  $\langle \cdot ,\cdot \rangle$ denotes the duality pairing of  $\EE$ and $\EE^*$. From the definition of a root system, it is clear that this duality pairing restricts to a pairing $\langle \cdot, \cdot \rangle: \Phi \times \Phi^\vee \rightarrow \ZZ$. When $\Phi$ is reduced, the \emph{simple coroots} $\Delta^\vee$, which are the duals of the simple roots, give the simple roots for the root system $\Phi^\vee$.

The \emph{root and weight lattices} of a root system $\Phi$ are, respectively
\begin{align*}
	\Rr(\Phi)&= \ZZ\langle \Phi\rangle \subset \EE\\	
\PP(\Phi)&= \left\{v\in \EE: \langle v, \alpha^\vee \rangle \in \ZZ, \forall \alpha \in \Phi\right\}.
\end{align*}
We can also consider the \emph{coroot and coweight lattices}, respectively,
\begin{align*}
	\Rr^\vee(\Phi)&=\Rr(\Phi^\vee)= \ZZ\langle\Phi^\vee\rangle \subset \EE^*\\	
\PP^\vee(\Phi)&= \PP(\Phi^\vee)= \left\{v \in \EE^*: \langle \alpha, v \rangle \in \ZZ, \forall \alpha \in \Phi\right\}.
\end{align*}
Note that the pairing $\langle \cdot, \cdot \rangle$ defines a perfect pairing between the root lattice and the coweight lattice, and between the weight lattice and the coroot lattice.

The elements of the intersection $\PP_+(\Delta)=\PP(\Phi)\cap C_\Delta^+$ are called the \emph{dominant weights} of $\Phi$. This intersection is a semigroup spanned by some elements $\omega_1,\dots,\omega_n$ called the \emph{fundamental dominant weights}. That is, we have
\begin{align*}
	\PP_+(\Delta)&=\mathbb{N}\langle \omega_1,\dots,\omega_n\rangle, \\	
	\PP(\Phi)&=\mathbb{Z}\langle \omega_1,\dots,\omega_n\rangle. 
\end{align*}
Dually, and assuming that $\Delta^\vee$ defines a set of simple coroots, we can consider the intersection 
$$\PP^\vee_+(\Delta)=\PP^\vee(\Phi)\cap C_{\Delta^\vee}^+=\left\{\lambda \in \PP^\vee(\Phi): \langle \lambda, \alpha \rangle \geq 0, \forall \alpha \in \Delta \right\}.$$
This is the set of \emph{dominant coweights} of $\Phi$. Again, this set is a semigroup spanned by some elements  $\lambda_1,\dots,\lambda_n$ called the \emph{fundamental dominant coweights}, and
\begin{align*}
	\PP^\vee_+(\Delta)&=\mathbb{N}\langle \lambda_1,\dots,\lambda_n\rangle, \\	
	\PP(\Phi)&=\mathbb{Z}\langle \lambda_1,\dots,\lambda_n\rangle. 
\end{align*}

A choice of the simple roots $\Delta$ also determines an order on $\EE$, and thus also on $\PP(\Phi)$ and $\Rr(\Phi)$, given by
\begin{equation*}
v\geq v' \text{ if and only if } v-v' \in \mathbb{N}\langle \Delta \rangle. 
\end{equation*} 

Note that, if $\Phi$ is a nonreduced root system, and $\alpha$ is a root with $2\alpha \in \Phi$ then $(2\alpha)^\vee=\alpha^\vee/2$.
Thus, when $\Phi$ is nonreduced, in order for the above definitions to work properly, we need to define the simple roots as the union of the indecomposable elements with their positive multiples that belong to $\Phi$. Given that definition, the dual simple roots $\Delta^\vee$ will be simple roots for $\Phi^\vee$ and we will be able to define Weyl chambers and fundamental and dominant weights and coweights just like in the reduced case. Note that this change in the definition does not change the Weyl group, since the reflection associated to $\alpha$ is the same that the one associated to $2\alpha$.

Finally, in this paper we consider the \emph{multiplicative invariants} of a root system. By this we mean the ring $k[e^{\PP(\Phi)}]^W$ of $W$-invariants of the group algebra of the weight lattice $k[e^{\PP(\Phi)}]$. We write $e^{\PP(\Phi)}$ in order to regard the weight lattice as a multiplicative abelian group, rather than an additive one. Now, given any element $a\in k[e^{\PP(\Phi)}]$, which is of the form
\begin{equation*}
a=\sum_{\omega \in \PP(\Phi)} a_{\omega}e^{\omega},
\end{equation*} 
we define the weights of $a$ to be those $\omega$ such that $a_{\omega}\neq 0$. The maximal elements among these weights are called the \emph{highest weights} of $a$. The main result here is the following.

 \begin{prop}[\cite{bourbaki}*{VI.3.4 Theorem 1}]
Let $\Phi$ be a reduced root system, with $\Delta\subset \Phi$ a choice of simple roots. Let $\omega_1,\dots,\omega_n$ be the corresponding fundamental dominant weights of $\Phi$ and, for each $i$, let $a_i\in k[e^{\PP(\Phi)}]^W$ be a $W$-invariant element with unique highest weight $\omega_i$. Then, there is an isomorphism
\begin{equation*}
k[e^{\PP(\Phi)}]^W \cong k[a_1,\dots,a_n].
\end{equation*} 
\end{prop}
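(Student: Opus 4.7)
The plan is the standard two-step argument: first establish algebraic independence of the $a_i$, then use a triangularity argument (in the dominance order on dominant weights) to write every $W$-invariant as a polynomial in the $a_i$.

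For the surjectivity statement, I would work with the natural $k$-basis of $k[e^{\PP(\Phi)}]^W$ given by the orbit sums
\begin{equation*}
m_\omega = \sum_{\mu \in W\cdot \omega} e^\mu, \qquad \omega \in \PP_+(\Delta).
\end{equation*}
The assumption on $a_i$ means that, when expressed in this basis, $a_i = c_i\, m_{\omega_i} + \sum_{\omega < \omega_i} c_{i,\omega}\, m_\omega$ with $c_i \neq 0$. Given any dominant weight $\omega = n_1 \omega_1 + \cdots + n_n \omega_n \in \PP_+(\Delta)$, form the monomial $a^{\bm{n}} := a_1^{n_1}\cdots a_n^{n_n}$. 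The key point is that multiplication in $k[e^{\PP(\Phi)}]$ adds weights, so the unique highest weight of $a^{\bm{n}}$ is exactly $\omega$, and hence $a^{\bm{n}} = c^{\bm{n}}\, m_\omega + \sum_{\omega' < \omega} d_{\bm{n},\omega'}\, m_{\omega'}$ for some nonzero $c^{\bm{n}}$. Since the set of dominant weights $\omega'$ with $\omega' \leq \omega$ is finite (this is a standard fact about the dominance order on $\PP_+$), a descending induction on this order lets me solve for $m_\omega$ as a polynomial in the $a_i$. Thus $k[e^{\PP(\Phi)}]^W = k[a_1,\dots,a_n]$ as a subring.

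For algebraic independence, suppose $P(a_1,\dots,a_n) = 0$ for some nonzero polynomial $P = \sum_{\bm{n}} \gamma_{\bm{n}} X^{\bm{n}}$. Using that the highest weight of $a^{\bm{n}}$ is $\sum n_i \omega_i$ and that the $\omega_i$ form a $\ZZ$-basis of $\PP(\Phi)$, distinct multi-indices $\bm{n}$ produce distinct highest weights. Pick $\bm{n}$ maximal (with $\gamma_{\bm n} \neq 0$) so that $\sum n_i \omega_i$ is maximal among the highest weights appearing; then the coefficient of $m_{\sum n_i \omega_i}$ in $P(a_1,\dots,a_n)$ is $\gamma_{\bm n}\, c^{\bm n} \neq 0$, contradicting $P(a_1,\dots,a_n) = 0$.

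The main obstacle is the bookkeeping of the triangularity step: one has to be sure that the dominance order, restricted to dominant weights below a fixed $\omega$, is a well-founded finite poset (so the descending induction actually terminates), and that the ``lower terms'' produced in each step really involve only strictly smaller dominant weights. Both facts are standard for crystallographic root systems, but they are exactly where the hypothesis that $\Phi$ is reduced, together with the choice of $\Delta$, is used to control the comparison of weights via $\omega - \omega' \in \mathbb{N}\langle \Delta\rangle$.
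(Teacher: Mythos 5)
Your proof is correct, and it is essentially the argument in Bourbaki VI.3.4 that the paper cites; the paper itself gives no proof, only the reference. Both the surjectivity step (triangular change of basis from the monomials $a^{\bm n}$ to the orbit sums $m_\omega$ via descending induction on the dominance order, using that $\{\omega' \in \PP_+ : \omega' \leq \omega\}$ is finite) and the algebraic-independence step (picking a maximal multi-index and isolating the coefficient of the corresponding highest weight, using that $\bm n \mapsto \sum n_i\omega_i$ is injective because the $\omega_i$ form a $\ZZ$-basis of $\PP(\Phi)$) are exactly the standard ingredients, and you have flagged the one point that needs care — that the leading coefficient $c^{\bm n}=\prod c_i^{n_i}$ of $e^{\sum n_i\omega_i}$ in $a^{\bm n}$ cannot vanish, because a sum $\sum\mu_j=\sum\omega_{i_j}$ with each $\mu_j\leq\omega_{i_j}$ forces $\mu_j=\omega_{i_j}$ by linear independence of the simple roots.
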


\begin{bibdiv}
\begin{biblist}
\bib{ABV}{book}{
   author={Adams, Jeffrey},
   author={Barbasch, Dan},
   author={Vogan, David A., Jr.},
   title={The Langlands classification and irreducible characters for real
   reductive groups},
   series={Progress in Mathematics},
   volume={104},
   publisher={Birkh\"{a}user Boston, Inc., Boston, MA},
   date={1992},
   pages={xii+318},
   isbn={0-8176-3634-X},
   review={\MR{1162533}},
   doi={10.1007/978-1-4612-0383-4},
}
\bib{adams-taibi}{article}{
   author={Adams, Jeffrey},
   author={Ta\"{\i}bi, Olivier},
   title={Galois and Cartan cohomology of real groups},
   journal={Duke Math. J.},
   volume={167},
   date={2018},
   number={6},
   pages={1057--1097},
   issn={0012-7094},
   review={\MR{3786301}},
   doi={10.1215/00127094-2017-0052},
}
\bib{ADHL}{book}{
   author={Arzhantsev, Ivan},
   author={Derenthal, Ulrich},
   author={Hausen, J\"{u}rgen},
   author={Laface, Antonio},
   title={Cox rings},
   series={Cambridge Studies in Advanced Mathematics},
   volume={144},
   publisher={Cambridge University Press, Cambridge},
   date={2015},
   pages={viii+530},
   isbn={978-1-107-02462-5},
   review={\MR{3307753}},
}
\bib{baraglia-schaposnik}{article}{
   author={Baraglia, David},
   author={Schaposnik, Laura P.},
   title={Real structures on moduli spaces of Higgs bundles},
   journal={Adv. Theor. Math. Phys.},
   volume={20},
   date={2016},
   number={3},
   pages={525--551},
   issn={1095-0761},
   review={\MR{3565861}},
   doi={10.4310/ATMP.2016.v20.n3.a2},
}
\bib{noconexo}{article}{
   author={Barajas, G.},
   author={Garc\'{\i}a-Prada, O.},
   author={Gothen, P. B.},
   author={Mundet i Riera, I.},
   title={Non-connected Lie groups, twisted equivariant bundles and
   coverings},
   journal={Geom. Dedicata},
   volume={217},
   date={2023},
   number={2},
   pages={Paper No. 27, 41},
   issn={0046-5755},
   review={\MR{4535843}},
   doi={10.1007/s10711-022-00764-w},
}
\bib{BZSV}{article}{
   author={Ben-Zvi, D.},
   author={Sakellaridis, Y.},
   author={Venkatesh, A.},
   title={Relative {L}anglands duality},
   note={Available at \url{www.math.ias.edu/~akshay/research/BZSVpaperV1.pdf}},
}
\bib{biswas-ramanan}{article}{
   author={Biswas, I.},
   author={Ramanan, S.},
   title={An infinitesimal study of the moduli of Hitchin pairs},
   journal={J. London Math. Soc. (2)},
   volume={49},
   date={1994},
   number={2},
   pages={219--231},
   issn={0024-6107},
   review={\MR{1260109}},
   doi={10.1112/jlms/49.2.219},
}
\bib{oscar-biswas}{article}{
   author={Biswas, Indranil},
   author={Garc\'{\i}a-Prada, Oscar},
   title={Anti-holomorphic involutions of the moduli spaces of Higgs
   bundles},
   journal={J. \'{E}c. polytech. Math.},
   volume={2},
   date={2015},
   pages={35--54},
   issn={2429-7100},
   review={\MR{3326004}},
   doi={10.5802/jep.16},
}
\bib{oscar-biswas-hurtubise}{article}{
   author={Biswas, Indranil},
   author={Garc\'{\i}a-Prada, Oscar},
   author={Hurtubise, Jacques},
   title={Higgs bundles, branes and Langlands duality},
   journal={Comm. Math. Phys.},
   volume={365},
   date={2019},
   number={3},
   pages={1005--1018},
   issn={0010-3616},
   review={\MR{3916987}},
   doi={10.1007/s00220-019-03290-1},
}
\bib{bourbaki}{book}{
   author={Bourbaki, Nicolas},
   title={Lie groups and Lie algebras. Chapters 4--6},
   series={Elements of Mathematics (Berlin)},
   publisher={Springer-Verlag, Berlin},
   date={2002},
   pages={xii+300},
   isbn={3-540-42650-7},
   review={\MR{1890629}},
   doi={10.1007/978-3-540-89394-3},
}
\bib{bouthier_Springer}{article}{
   author={Bouthier, Alexis},
   title={Dimension des fibres de Springer affines pour les groupes},
   journal={Transform. Groups},
   volume={20},
   date={2015},
   number={3},
   pages={615--663},
   issn={1083-4362},
   review={\MR{3376144}},
   doi={10.1007/s00031-015-9326-9},
}
\bib{bouthier_fibration}{article}{
   author={Bouthier, Alexis},
   title={La fibration de Hitchin-Frenkel-Ng\^{o} et son complexe
   d'intersection},
   journal={Ann. Sci. \'{E}c. Norm. Sup\'{e}r. (4)},
   volume={50},
   date={2017},
   number={1},
   pages={85--129},
   issn={0012-9593},
   review={\MR{3621427}},
   doi={10.24033/asens.2316},
}
\bib{bouthier-chi}{article}{
   author={Bouthier, Alexis},
   author={Chi, Jingren},
   title={Correction to ``Dimension des fibres de Springer affines pour les
   groupes'' [ MR3376144]},
   journal={Transform. Groups},
   volume={23},
   date={2018},
   number={4},
   pages={1217--1222},
   issn={1083-4362},
   review={\MR{3869433}},
   doi={10.1007/s00031-018-9496-3},
}
\bib{branco}{thesis}{
   author={Branco, Lucas C.},
   title={Higgs bundles, Lagrangians and mirror symmetry},
   type={Ph. D. Thesis},
   date={2018},
   organization={University of Oxford},
   status={Available at \url{arxiv.org/abs/1803.01611}},
}
\bib{brion}{article}{
   author={Brion, Michel},
   title={The total coordinate ring of a wonderful variety},
   journal={J. Algebra},
   volume={313},
   date={2007},
   number={1},
   pages={61--99},
   issn={0021-8693},
   review={\MR{2326138}},
   doi={10.1016/j.jalgebra.2006.12.022},
}
\bib{charbonneau-hurtubise}{article}{
   author={Charbonneau, Benoit},
   author={Hurtubise, Jacques},
   title={Singular Hermitian-Einstein monopoles on the product of a circle
   and a Riemann surface},
   journal={Int. Math. Res. Not. IMRN},
   date={2011},
   number={1},
   pages={175--216},
   issn={1073-7928},
   review={\MR{2755487}},
   doi={10.1093/imrn/rnq059},
}
\bib{cherkis-kapustin}{article}{
   author={Cherkis, Sergey A.},
   author={Kapustin, Anton},
   title={Hyper-K\"{a}hler metrics from periodic monopoles},
   journal={Phys. Rev. D (3)},
   volume={65},
   date={2002},
   number={8},
   pages={084015, 10},
   issn={0556-2821},
   review={\MR{1899201}},
   doi={10.1103/PhysRevD.65.084015},
}
\bib{chi}{article}{
   author={Chi, Jingren},
   title={Geometry of Kottwitz-Viehmann varieties},
   journal={J. Inst. Math. Jussieu},
   volume={21},
   date={2022},
   number={1},
   pages={1--65},
   issn={1474-7480},
   review={\MR{4366333}},
   doi={10.1017/S1474748019000604},
}
\bib{corniani-massarenti}{article}{
   author={Corniani, Elsa},
   author={Massarenti, Alex},
   title={Complete symplectic quadrics and Kontsevich spaces of conics in
   Lagrangian Grassmannians},
   journal={Adv. Math.},
   volume={397},
   date={2022},
   pages={Paper No. 108205, 50},
   issn={0001-8708},
   review={\MR{4366856}},
   doi={10.1016/j.aim.2022.108205},
}
\bib{deconcini-procesi}{article}{
   author={De Concini, C.},
   author={Procesi, C.},
   title={Complete symmetric varieties},
   conference={
      title={Invariant theory},
      address={Montecatini},
      date={1982},
   },
   book={
      series={Lecture Notes in Math.},
      volume={996},
      publisher={Springer, Berlin},
   },
   date={1983},
   pages={1--44},
   review={\MR{718125}},
   doi={10.1007/BFb0063234},
}
\bib{donagi-pantev}{article}{
   author={Donagi, R.},
   author={Pantev, T.},
   title={Langlands duality for Hitchin systems},
   journal={Invent. Math.},
   volume={189},
   date={2012},
   number={3},
   pages={653--735},
   issn={0020-9910},
   review={\MR{2957305}},
   doi={10.1007/s00222-012-0373-8},
}
\bib{elliot-pestun}{article}{
   author={Elliott, Chris},
   author={Pestun, Vasily},
   title={Multiplicative Hitchin systems and supersymmetric gauge theory},
   journal={Selecta Math. (N.S.)},
   volume={25},
   date={2019},
   number={4},
   pages={Paper No. 64, 82},
   issn={1022-1824},
   review={\MR{4021850}},
   doi={10.1007/s00029-019-0510-y},
}
\bib{frenkel-ngo}{article}{
   author={Frenkel, Edward},
   author={Ng\^{o}, Bao Ch\^{a}u},
   title={Geometrization of trace formulas},
   journal={Bull. Math. Sci.},
   volume={1},
   date={2011},
   number={1},
   pages={129--199},
   issn={1664-3607},
   review={\MR{2823791}},
   doi={10.1007/s13373-011-0009-0},
}
\bib{gaiotto-witten}{article}{
   author={Gaiotto, Davide},
   author={Witten, Edward},
   title={$S$-duality of boundary conditions in $\mathscr{N}=4$ super Yang-Mills
   theory},
   journal={Adv. Theor. Math. Phys.},
   volume={13},
   date={2009},
   number={3},
   pages={721--896},
   issn={1095-0761},
   review={\MR{2610576}},
}
\bib{gaitsgory-nadler}{article}{
   author={Gaitsgory, Dennis},
   author={Nadler, David},
   title={Spherical varieties and Langlands duality},
   language={English, with English and Russian summaries},
   journal={Mosc. Math. J.},
   volume={10},
   date={2010},
   number={1},
   pages={65--137, 271},
   issn={1609-3321},
   review={\MR{2668830}},
   doi={10.17323/1609-4514-2010-10-1-65-137},
}
\bib{tesis}{thesis}{
   author={Gallego, Guillermo},
   title={Multiplicative Higgs bundles, monopoles and involutions},
   type={Ph. D. Thesis},
   date={2023},
   organization={Universidad Complutense de Madrid},
   status={Available at \url{guillegallego.xyz/files/GuillermoGallego_Tesis.pdf}},
}
\bib{oscar-gothen-mundet}{arXiv}{
   author={Garc\'{\i}a-Prada, Oscar},
   author={Gothen, Peter B.},
   author={Mundet i Riera, Ignasi},
   title={The Hitchin-Kobayashi correspondence, Higgs pairs and surface group representations},
   date={2009},
   eprint={0909.4487},
   archiveprefix={arXiv},
}
\bib{ana-oscar-ramanan}{article}{
   author={Garc\'{\i}a-Prada, Oscar},
   author={Pe\'{o}n-Nieto, Ana},
   author={Ramanan, S.},
   title={Higgs bundles for real groups and the Hitchin-Kostant-Rallis
   section},
   journal={Trans. Amer. Math. Soc.},
   volume={370},
   date={2018},
   number={4},
   pages={2907--2953},
   issn={0002-9947},
   review={\MR{3748589}},
   doi={10.1090/tran/7363},
}
\bib{oscar-ramanan}{article}{
   author={Garc\'{\i}a-Prada, Oscar},
   author={Ramanan, S.},
   title={Involutions and higher order automorphisms of Higgs bundle moduli
   spaces},
   journal={Proc. Lond. Math. Soc. (3)},
   volume={119},
   date={2019},
   number={3},
   pages={681--732},
   issn={0024-6115},
   review={\MR{3960666}},
   doi={10.1112/plms.12242},
}
\bib{guay}{article}{
   author={Guay, Nicolas},
   title={Embeddings of symmetric varieties},
   journal={Transform. Groups},
   volume={6},
   date={2001},
   number={4},
   pages={333--352},
   issn={1083-4362},
   review={\MR{1870051}},
   doi={10.1007/BF01237251},
}
\bib{helgason}{book}{
   author={Helgason, Sigurdur},
   title={Differential geometry, Lie groups, and symmetric spaces},
   series={Graduate Studies in Mathematics},
   volume={34},
   note={Corrected reprint of the 1978 original},
   publisher={American Mathematical Society, Providence, RI},
   date={2001},
   pages={xxvi+641},
   isbn={0-8218-2848-7},
   review={\MR{1834454}},
   doi={10.1090/gsm/034},
}
\bib{hitchin}{article}{
   author={Hitchin, Nigel},
   title={Stable bundles and integrable systems},
   journal={Duke Math. J.},
   volume={54},
   date={1987},
   number={1},
   pages={91--114},
   issn={0012-7094},
   review={\MR{885778}},
   doi={10.1215/S0012-7094-87-05408-1},
}
\bib{hitchin_hirzebruch}{article}{
   author={Hitchin, Nigel},
   title={Higgs bundles and characteristic classes},
   conference={
      title={Arbeitstagung Bonn 2013},
   },
   book={
      series={Progr. Math.},
      volume={319},
      publisher={Birkh\"{a}user/Springer, Cham},
   },
   date={2016},
   pages={247--264},
   review={\MR{3618052}},
   doi={10.1007/978-3-319-43648-7\textunderscore8},
}
\bib{hurtubise-markman}{article}{
   author={Hurtubise, J. C.},
   author={Markman, E.},
   title={Elliptic Sklyanin integrable systems for arbitrary reductive
   groups},
   journal={Adv. Theor. Math. Phys.},
   volume={6},
   date={2002},
   number={5},
   pages={873--978 (2003)},
   issn={1095-0761},
   review={\MR{1974589}},
   doi={10.4310/ATMP.2002.v6.n5.a4},
}
\bib{kapustin-witten}{article}{
   author={Kapustin, Anton},
   author={Witten, Edward},
   title={Electric-magnetic duality and the geometric Langlands program},
   journal={Commun. Number Theory Phys.},
   volume={1},
   date={2007},
   number={1},
   pages={1--236},
   issn={1931-4523},
   review={\MR{2306566}},
   doi={10.4310/CNTP.2007.v1.n1.a1},
}
\bib{knop-schalke}{article}{
   author={Knop, F.},
   author={Schalke, B.},
   title={The dual group of a spherical variety},
   journal={Trans. Moscow Math. Soc.},
   volume={78},
   date={2017},
   pages={187--216},
   issn={0077-1554},
   review={\MR{3738085}},
   doi={10.1090/mosc/270},
}
\bib{mochizuki}{book}{
   author={Mochizuki, Takuro},
   title={Periodic monopoles and difference modules},
   series={Lecture Notes in Mathematics},
   volume={2300},
   publisher={Springer, Cham},
   date={[2022] \copyright 2022},
   pages={xviii+321},
   isbn={978-3-030-94499-5},
   isbn={978-3-030-94500-8},
   review={\MR{4396947}},
   doi={10.1007/978-3-030-94500-8},
}
\bib{nadler_matsuki}{article}{
   author={Nadler, David},
   title={Matsuki correspondence for the affine Grassmannian},
   journal={Duke Math. J.},
   volume={124},
   date={2004},
   number={3},
   pages={421--457},
   issn={0012-7094},
   review={\MR{2084612}},
   doi={10.1215/S0012-7094-04-12431-5},
}
\bib{nadler}{article}{
   author={Nadler, David},
   title={Perverse sheaves on real loop Grassmannians},
   journal={Invent. Math.},
   volume={159},
   date={2005},
   number={1},
   pages={1--73},
   issn={0020-9910},
   review={\MR{2142332}},
   doi={10.1007/s00222-004-0382-3},
}
\bib{ngo_lemme}{article}{
   author={Ng\^{o}, Bao Ch\^{a}u},
   title={Le lemme fondamental pour les alg\`ebres de Lie},
   journal={Publ. Math. Inst. Hautes \'{E}tudes Sci.},
   number={111},
   date={2010},
   pages={1--169},
   issn={0073-8301},
   review={\MR{2653248}},
   doi={10.1007/s10240-010-0026-7},
}
\bib{richardson2}{article}{
   author={Richardson, R. W.},
   title={On orbits of algebraic groups and Lie groups},
   journal={Bull. Austral. Math. Soc.},
   volume={25},
   date={1982},
   number={1},
   pages={1--28},
   issn={0004-9727},
   review={\MR{651417}},
   doi={10.1017/S0004972700005013},
}
\bib{richardson}{article}{
   author={Richardson, R. W.},
   title={Orbits, invariants, and representations associated to involutions
   of reductive groups},
   journal={Invent. Math.},
   volume={66},
   date={1982},
   number={2},
   pages={287--312},
   issn={0020-9910},
   review={\MR{656625}},
   doi={10.1007/BF01389396},
}
\bib{sake-venka}{article}{
   author={Sakellaridis, Yiannis},
   author={Venkatesh, Akshay},
   title={Periods and harmonic analysis on spherical varieties},
   language={English, with English and French summaries},
   journal={Ast\'{e}risque},
   number={396},
   date={2017},
   pages={viii+360},
   issn={0303-1179},
   isbn={978-2-85629-871-8},
   review={\MR{3764130}},
}
\bib{schmitt}{book}{
   author={Schmitt, Alexander H. W.},
   title={Geometric invariant theory and decorated principal bundles},
   series={Zurich Lectures in Advanced Mathematics},
   publisher={European Mathematical Society (EMS), Z\"{u}rich},
   date={2008},
   pages={viii+389},
   isbn={978-3-03719-065-4},
   review={\MR{2437660}},
   doi={10.4171/065},
}
\bib{smith}{article}{
   author={Smith, Benjamin H.},
   title={Singular $G$-monopoles on $S^1\times\Sigma$},
   journal={Canad. J. Math.},
   volume={68},
   date={2016},
   number={5},
   pages={1096--1119},
   issn={0008-414X},
   review={\MR{3536929}},
   doi={10.4153/CJM-2016-010-2},
}
\bib{steinberg_endomorphisms}{book}{
   author={Steinberg, Robert},
   title={Endomorphisms of linear algebraic groups},
   series={Memoirs of the American Mathematical Society, No. 80},
   publisher={American Mathematical Society, Providence, R.I.},
   date={1968},
   pages={108},
   review={\MR{0230728}},
}
\bib{timashev}{book}{
   author={Timashev, Dmitry A.},
   title={Homogeneous spaces and equivariant embeddings},
   series={Encyclopaedia of Mathematical Sciences},
   volume={138},
   note={Invariant Theory and Algebraic Transformation Groups, 8},
   publisher={Springer, Heidelberg},
   date={2011},
   pages={xxii+253},
   isbn={978-3-642-18398-0},
   review={\MR{2797018}},
   doi={10.1007/978-3-642-18399-7},
}
\bib{vinberg}{article}{
   author={Vinberg, E. B.},
   title={On reductive algebraic semigroups},
   conference={
      title={Lie groups and Lie algebras: E. B. Dynkin's Seminar},
   },
   book={
      series={Amer. Math. Soc. Transl. Ser. 2},
      volume={169},
      publisher={Amer. Math. Soc., Providence, RI},
   },
   date={1995},
   pages={145--182},
   review={\MR{1364458}},
   doi={10.1090/trans2/169/10},
}
\bib{vust_cones}{article}{
   author={Vust, Thierry},
   title={Op\'{e}ration de groupes r\'{e}ductifs dans un type de c\^{o}nes presque
   homog\`enes},
   journal={Bull. Soc. Math. France},
   volume={102},
   date={1974},
   pages={317--333},
   issn={0037-9484},
   review={\MR{366941}},
}
\bib{wang}{article}{
   author={Wang, Griffin},
   title={Multiplicative Hitchin Fibration and Fundamental Lemma},
   date={2024},
   status={Available at \url{palantiri.griffin.wang/MHFL.pdf}},
}
\bib{zhu}{article}{
   author={Zhu, Xinwen},
   title={An introduction to affine Grassmannians and the geometric Satake
   equivalence},
   conference={
      title={Geometry of moduli spaces and representation theory},
   },
   book={
      series={IAS/Park City Math. Ser.},
      volume={24},
      publisher={Amer. Math. Soc., Providence, RI},
   },
   date={2017},
   pages={59--154},
   review={\MR{3752460}},
}
\end{biblist}
\end{bibdiv}

\end{document}